  \newenvironment{proof}{\vspace{1ex}\noindent{\bf Proof:}}{\hspace*{\fill}$\blacksquare$\vspace{1ex}}
  \def\noproof{\hspace*{\fill}$\blacksquare$}
  \newtheorem{theorem}{Theorem}[section]
  \newtheorem{lemma} [theorem] {Lemma}
  \newtheorem{corollary} [theorem] {Corollary}
\newcommand{\Acal}[0]{\ensuremath{{\mathcal A}}}
\newcommand{\Dcal}[0]{\ensuremath{{\mathcal D}}}
\newcommand{\Ical}[0]{\ensuremath{{\mathcal I}}}
\newcommand{\Lcal}[0]{\ensuremath{{\mathcal L}}}
\newcommand{\Pcal}[0]{\ensuremath{{\mathcal P}}}
\newcommand{\Qcal}[0]{\ensuremath{{\mathcal Q}}}
\newcommand{\Scal}[0]{\ensuremath{{\mathcal S}}}
\newcommand{\eR}[0]{\ensuremath{ \mathbb R}}
\newcommand{\RP}[0]{\ensuremath{{\eR}P}}
\newcommand{\Qu}[0]{\ensuremath{ \mathbb Q}}
\newcommand{\eN}[0]{\ensuremath{ \mathbb N}}
\newcommand{\Zed}[0]{\ensuremath{ \mathbb Z}}
\newcommand{\ptil}[0]{\tilde{p}}
\newcommand{\qtil}[0]{\tilde{q}}
\newcommand{\rtil}[0]{\tilde{r}}
\newcommand{\xtil}[0]{\tilde{x}}
\newcommand{\ytil}[0]{\tilde{y}}
\newcommand{\Atil}[0]{\tilde{A}}
\newcommand{\Btil}[0]{\tilde{B}}
\newcommand{\Ctil}[0]{\tilde{C}}
\newcommand{\Dtil}[0]{\tilde{D}}
\newcommand{\Etil}[0]{\tilde{E}}
\newcommand{\Otil}[0]{\tilde{O}}
\newcommand{\Qtil}[0]{\tilde{Q}}
\newcommand{\Stil}[0]{\tilde{S}}
\newcommand{\Lcaltil}[0]{\tilde{\Lcal}}
\newcommand{\Pcaltil}[0]{\tilde{\Pcal}}
\newcommand{\Qcaltil}[0]{\tilde{\Qcal}}
\newcommand{\elltil}[0]{\tilde{\ell}}
\newcommand{\norm}[1]{\ensuremath{\|#1\|}}
\newcommand{\tel}[1]{\ensuremath{ |#1| }}
 \newcommand{\eps}{\varepsilon}
\DeclareMathOperator{\bsize}{bitsize}
\DeclareMathOperator{\conv}{conv}
\DeclareMathOperator{\spanH}{span}
\DeclareMathOperator{\clo}{cl}
\DeclareMathOperator{\cross}{cross}
\newcommand{\constr}[1]{{\bf(CPC-#1)}}
\newcommand{\Lcon}[1]{{\bf(\Lcal-#1)}}
\newcommand{\SSS}[1]{{\bf(S-#1)}}
\newcommand{\DG}[0]{\ensuremath{{\cal{DG}}}}
\newcommand{\UDG}[0]{\ensuremath{{\cal{UDG}}}}
\newcommand{\SEG}[0]{\ensuremath{{\cal{SEG}}}}
\newcommand{\fDG}[0]{\ensuremath{f_{\cal{DG}}}}
\newcommand{\fUDG}[0]{\ensuremath{f_{\cal{UDG}}}}
\newcommand{\fSEG}[0]{\ensuremath{f_{\cal{SEG}}}}
\newcommand{\gDG}[0]{\ensuremath{g_{\cal{DG}}}}
\newcommand{\gUDG}[0]{\ensuremath{g_{\cal{UDG}}}}
\newcommand{\gSEG}[0]{\ensuremath{g_{\cal{SEG}}}}
 \title{Integer realizations of disk and segment graphs}
  \author{Colin McDiarmid \\ University of Oxford \\ {\tt cmcd@stats.ox.ac.uk}
\and Tobias M\"uller\thanks{Research partially supported by a VENI grant from Netherlands Organisation for Scientific Research (NWO).} \\ Centrum Wiskunde \& Informatica \\ {\tt tobias@cwi.nl}
}
\begin{document}

  \maketitle

\begin{abstract}
A disk graph is the intersection graph of disks in the plane, a unit disk graph is
the intersection graph of same radius disks in the plane, and a segment graph
is an intersection graph of line segments in the plane.
It can be seen that every disk graph can be realized by disks with centers on the integer 
grid and with integer radii; and similarly every unit disk graph can be realized by disks 
with centers on the integer grid and equal (integer) radius; and every segment graph
can be realized by segments whose endpoints lie on the integer grid. 
Here we show that there exist disk graphs on $n$ vertices such that
in every realization by integer disks at least one coordinate or 
radius is $2^{2^{\Omega(n)}}$ and on the other hand 
every disk graph can be realized by disks with integer coordinates and radii
that are at most $2^{2^{O(n)}}$; and we show the
analogous results for unit disk graphs and segment graphs.
For (unit) disk graphs this answers a question of Spinrad, and
for segment graphs this improves over a previous result
by Kratochv\'{\i}l and Matou{\v{s}}ek.
\end{abstract}


\section{Introduction and statement of results}

In this paper we will consider intersection graphs of disks and segments in the plane.
Over the past 20 years or so, intersection graphs of geometric objects in the plane, especially 
unit disk graphs (intersection graphs of equal radius disks), have been considered 
by many different authors. 
Partly because of their relevance for practical applications 
one of the main foci is the design of (efficient) algorithms for them. 
One can of course store a (unit) disk graph in a computer as an adjacency
matrix or a list of edges, but for many purposes it might be useful to
actually store a geometric representation in the form of the radii and the coordinates of the centres of the 
disks.
It can be seen that every disk graph can be realized by disks with centers on the integer 
grid $\Zed^2$ and with integer radii; and similarly every unit disk graph can be realized by disks 
with centers on the integer grid and equal (integer) radius.
A formal proof is given in section~\ref{sec:ub} below.
Spinrad~\cite{SpinradBoek} asked whether every (unit) disk graph 
has a such a representation where all the integers
involved are at most $2^{O(n^K)}$ for some fixed $K$.
This question was also studied by Van Leeuwen and Van Leeuwen~\cite{VanleeuwenVanleeuwenTechreport}
who called it the {\em Polynomial Representation Hypothesis} (PRH). They showed 
that a positive answer is equivalent to a statement about spacings between the points of
a realization and they discussed some of its implications.
(See also chapter 4 of~\cite{erikjanthesis}.)
Theorems~\ref{thm:DG} and~\ref{thm:UDG} below show that the
PRH in fact fails as there are (unit) disk graphs that 
require integers that are doubly exponentially large in the number of vertices $n$.
Theorem~\ref{thm:SEG} establishes the analogous results for segment graphs if 
we place all the endpoints of the segments on $\Zed^2$.
This improves over earlier work of Kratochv{\'{\i}}l and Matou{\v{s}}ek~\cite{KratochvilMatousek94}
who had showed that integers that are doubly exponentially large in the
{\em square root} of the number of vertices may be needed for segment graphs.

Breu and Kirkpatrick~\cite{BreuKirkpatrick98} proved that the algorithmic decision problem of recognizing 
unit disk graphs (i.e.~given a graph in adjacency matrix form as input, decide whether
it is a unit disk graph) is NP-hard; and Hlin\v{e}n\'y and Kratochv{\'{\i}}l~\cite{HlinenyKratochvil2001} proved that recognizing disk graphs is NP-hard.
Had it been true, the PRH would have proved that
these problems are also members of the complexity class NP.
For a decision problem to be in NP we need a ``polynomial certificate'';
that is, for each graph that is a (unit) disk graph there should be a proof 
of this fact in the form of a polynomial size bit string 
that can be checked by an algorithm in polynomial time. 
An obvious candidate for such a certificate is a list of the radii and the coordinates of 
the centres of the disks representing the graph.
For this to be a good certificate, we would however need to guarantee
that these coordinates and radii can be stored using polynomially many bits
(which would be the case if they were all integers bounded by $2^{O(n^K)}$).
We will also see that if we use rational coordinates and radii 
rather than integers 
then we still need an exponential number of bits.

We will now proceed to give the necessary definitions to 
state our results more formally.


\subsection{Statement of results}

If $\Acal = ( A(v) : v \in V )$ is a tuple of sets, then the {\em intersection graph}
of $\Acal$ is the graph $G = (V,E)$ with vertex set $V$, and an edge $uv \in
E$ if and only if $A(v) \cap A(v) \neq \emptyset$. 
We say that $\Acal$ {\em realizes} $G$.
If all the sets $A(v)$ are closed line segments in the plane then we speak of 
a {\em segment graph}.
A {\em disk graph} is an intersection graph of open disks in the plane, and 
if all the disks can be taken to have the same radius, then we speak of
a {\em unit disk graph}.
Let us denote by $\DG$ the set of all graphs that are (isomorphic to) disk graphs; and similarly
let $\UDG$ resp.~$\SEG$ denote the set of all graphs that are (isomorphic to) unit disk graphs 
resp.~segment graphs. Let us also set 
$\DG(n) := \{ G \in \DG : |V(G)| = n \}, 
\UDG(n) := \{ G \in \UDG : |V(G)| = n \},
\SEG(n) := \{ G \in \SEG : |V(G)| = n \}$.

If $\Acal = (A(v):v\in V(G))$ is a realization of $G\in\DG$ and
all the $A(v)$ are disks then we also say that 
$\Acal$ is a {\em$\DG$-realization} of $G$ or a realization of $G$ as a 
disk graph.
Similarly if the $A(v)$ are disks of the same radius, then we say 
$\Acal$ is a {\em$\UDG$-realization} of $G$; and 
if all the $A(v)$ are segments then we say that $\Acal$ is a
{\em$\SEG$-realization} of $G$.

It can be seen that every $G\in\DG$ has a $\DG$-realization in which the centers
of the disks $A(v)$ lie on $\Zed^2$ and the radii are integers. 
(A formal proof is given in section~\ref{sec:ub} below.)
We shall call such a realization an {\em integer $\DG$-realization} 
or just an integer realization if no confusion can arise.
Similarly in an {\em integer $\UDG$-realization} of $G\in\UDG$ all 
the disks $A(v)$ have centers $\in \Zed^d$ and a common radius $\in\eN$;
and in an {\em integer $\SEG$-realization} of $G\in\SEG$ both endpoints
of each segment $A(v)$ lie on $\Zed^2$. 

If $\Acal$ is a collection of bounded sets in the plane, then we will denote:

\[ 
k(\Acal) := \min\{ k \in \eN : A \subseteq [-k,k]^2 \text{ for all } A \in \Acal \}.
\]

\noindent
For $G\in\DG$ we will denote 

\[ 
\fDG(G) = \min_{\Acal \text{ integer $\DG$-realization}\atop \text{ of $G$} }
k(\Acal), \]

\noindent
and let us define $\fUDG(G)$ for $G \in \UDG$ and
$\fSEG(G)$ for $G\in\SEG$ analogously.
We now define

\[
\fDG(n) := \max_{G \in \DG(n)} \fDG(G), 
\]

\noindent
and we define $\fUDG(n)$ and $\fSEG(n)$ analogously.
Phrased differently, $\fDG(n)$ tells us the smallest square piece of the integer grid on which we can realize 
any disk graph on $n$ vertices.

Both Spinrad~\cite{SpinradBoek} and Van Leeuwen and Van Leeuwen~\cite{VanleeuwenVanleeuwenTechreport}
(see also chapter 4 of~\cite{erikjanthesis}) asked whether $\fDG(n)$ and
$\fUDG(n)$ are bounded by $2^{O(n^K)}$ for some constant $K$.
This was called the {\em polynomial representation hypothesis} (for disk/unit disk graphs) 
by Van Leeuwen and Van Leeuwen.
Here we will disprove the polynomial representation hypothesis:

\begin{theorem}\label{thm:DG}
$\fDG(n) = 2^{2^{\Theta(n)}}$.
\end{theorem}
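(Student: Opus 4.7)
The plan is to prove matching upper and lower bounds.

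For the upper bound $\fDG(n) \le 2^{2^{O(n)}}$: I would encode a $\DG$-realization of $G \in \DG(n)$ as a point of the semi-algebraic set in $\eR^{3n}$ given by coordinates and radii $(x_v,y_v,r_v)_{v \in V}$ satisfying $r_v > 0$ for each $v$, $(x_u-x_v)^2+(y_u-y_v)^2 < (r_u+r_v)^2$ for each edge $uv$, and $(x_u-x_v)^2+(y_u-y_v)^2 \ge (r_u+r_v)^2$ for each non-edge. This is a conjunction of $O(n^2)$ polynomial inequalities of degree $2$ in $O(n)$ variables with $O(1)$-bit coefficients, and it is non-empty by hypothesis. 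Classical quantitative results from real algebraic geometry (sample-point theorems in the spirit of Grigoriev--Vorobjov or Basu--Pollack--Roy) then produce a rational point in this set whose coordinates have bit-length $2^{O(n)}$. Clearing denominators and snapping to the integer grid --- using exactly the same perturbation argument promised in Section~\ref{sec:ub} showing integer realizations exist --- converts this into an integer $\DG$-realization $\Acal$ with $k(\Acal) \le 2^{2^{O(n)}}$.

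For the lower bound $\fDG(n) \ge 2^{2^{\Omega(n)}}$: I would construct an explicit family $H_n \in \DG(\Theta(n))$ whose integer $\DG$-realizations necessarily use coordinates of doubly exponential magnitude. The strategy is a recursive squaring cascade. I build $n$ constant-size gadgets $C_0,C_1,\ldots,C_n$ assembled in the plane, where each $C_i$ contains a distinguished pair of disks whose radius ratio (or center-distance ratio) is forced by the local incidence structure to be at least the square of the corresponding ratio in $C_{i-1}$. A base ratio of order $2$ then forces the top-level ratio to be at least $2^{2^n}$, and because all radii and coordinates are positive integers this can only be witnessed by some disk in $\Acal$ having radius or coordinate of magnitude $\ge 2^{2^{\Omega(n)}}$.

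The principal obstacle is the design and verification of the squaring gadget. It must simultaneously (a) pin its input ratio rigidly enough that the output really scales as the square, (b) introduce only $O(1)$ multiplicative error at each level so that $n$ compositions do not dilute the bound, and (c) be attachable to the next level without creating spurious disk intersections, all while using $O(1)$ vertices per level --- this constant vertex cost per doubling is exactly what would improve the Kratochv{\'{\i}}l--Matou{\v{s}}ek bound of $2^{2^{\Omega(\sqrt n)}}$ for segment graphs to $2^{2^{\Omega(n)}}$. A natural tool is to use chains of nearly tangent disks as proxies for point incidences, turning a disk-graph realization into a von~Staudt / Mn\"ev-style geometric encoding of arithmetic in which iterating multiplication by a constant $n$ times provably forces doubly exponential integer coordinates. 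Proving robustness of the encoding against discretisation is where I expect most of the quantitative work to lie.
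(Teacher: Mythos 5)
Your upper bound is essentially the paper's: write a $\DG$-realization as a solution of $O(n^2)$ quadratic inequalities in $O(n)$ variables with small integer coefficients, invoke a Grigor'ev--Vorobjov sample-point bound to get a solution of magnitude $2^{2^{O(n)}}$, and round to integers (the paper first rescales so every inequality holds with slack, which makes the rounding verifiably safe). No objection there.

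The lower bound, however, has a genuine gap. You correctly identify the decisive quantitative point --- a squaring step costing only $O(1)$ vertices, iterated $n$ times --- and you correctly guess that Von Staudt--style encodings of arithmetic are the tool. But the mechanism you propose, a gadget $C_i$ whose local disk incidence structure forces a radius or distance ratio to equal (or dominate) the square of the previous one, is not something disk intersections can deliver directly: the only constraints a disk graph imposes are inequalities of the form $\|p_u-p_v\|<r_u+r_v$ or the reverse, and such inequalities cannot pin down a multiplicative relation between ratios. The paper routes the arithmetic through projective geometry instead: Von Staudt sequences give a constructible point configuration in which repeated squaring is exact at the level of cross-ratios; Theorem~\ref{thm:sizeLline} converts this into an oriented line arrangement $\Lcal$ together with a set $S$ of sign vectors, both of size $O(k)$, such that \emph{every} arrangement realizing $S$ has span at least $2^{2^k}$; and Lemma~\ref{lem:gridspan} converts a large span into large integers. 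The remaining step --- and for general disks the hardest one --- is to encode ``this point has this sign vector with respect to these lines'' into a disk graph. One vertex per sign vector suffices for unit disks, but not for disks of arbitrary radii, so the paper builds the gadget $H$ of Lemma~\ref{lem:Hlemma}: a constant-size disk graph every realization of which determines a point $p$ that any convex set meeting all disks of $H$ must contain. Its construction and verification (triangle-free disk realizations yield F\'ary embeddings, the onion graphs $O_k$, the rotated copies forming the cycle $C$, the separating paths through the four-cycle $F$) occupy most of Section~\ref{sec:DGlb} and have no counterpart in your sketch. Until you either supply this gadget or exhibit a working substitute for your proposed squaring gadget, the lower bound remains a plan rather than a proof.
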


\begin{theorem}\label{thm:UDG}
$\fUDG(n) = 2^{2^{\Theta(n)}}$.
\end{theorem}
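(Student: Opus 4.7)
Fix a graph $G$ on $n$ vertices. The existence of a $\UDG$-realization of $G$ is expressible as a first-order existential sentence over $\eR$: the unknowns are the $2n$ coordinates of the centers $p_1,\dots,p_n\in\eR^2$ and the common radius $r$, and the constraints are $\norm{p_u-p_v}^2<4r^2$ for every edge $uv$ of $G$, $\norm{p_u-p_v}^2\geq 4r^2$ for every non-edge, and $r>0$. This is a semi-algebraic system of $O(n^2)$ polynomial (in)equalities of constant degree in $O(n)$ variables. Sample-point theorems in real algebraic geometry (Grigor'ev--Vorobjov, Basu--Pollack--Roy) then guarantee that, if this system is solvable at all, it has a rational solution whose numerators and denominators have bit-length at most $2^{O(n)}$. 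A rounding/perturbation step --- carried out in Section~\ref{sec:ub} --- then converts such a rational realization into an integer $\UDG$-realization fitting in $[-k,k]^2$ with $k\leq 2^{2^{O(n)}}$, yielding $\fUDG(n)\leq 2^{2^{O(n)}}$.

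\medskip

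\noindent\textbf{Lower bound.} For the matching lower bound I would build an explicit family $\{G_n\}$ with $|V(G_n)|=\Theta(n)$ and $\fUDG(G_n)\geq 2^{2^{\Omega(n)}}$. The engine of the construction is an iterated \emph{squaring gadget}: a $\UDG$ $H$ on $O(1)$ vertices with two designated input vertices and two designated output vertices, such that in every $\UDG$-realization of $H$ inside a larger anchoring configuration (which fixes the common radius and the position of a reference segment), the output distance is forced, up to a fixed multiplicative constant, to equal the \emph{square} of the input distance. Composing $k=\Theta(n)$ copies of $H$ in series, by identifying the outputs of the $i$-th copy with the inputs of the $(i{+}1)$-st, one obtains a graph $G_n\in\UDG(\Theta(n))$ in which the final pair of centers is forced to distance $d_k=c^{2^k-1}d_0^{2^k}$, where $d_0$ is the initial distance pinned down by the anchor. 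An integer realization forces $d_0\geq 1$, and $c$ can be arranged to be $\geq 2$, so $d_k\geq 2^{2^{\Omega(n)}}$, forcing some coordinate of that magnitude.

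\medskip

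\noindent\textbf{Main obstacles.} The principal difficulty is that the squaring gadget must use only \emph{equal}-radius disks, so I cannot copy the construction of Theorem~\ref{thm:DG} directly (which presumably exploits disks of very different radii to juxtapose the two scales needed for a quadratic relation). The first thing I would try is a generic reduction from $\DG$ to $\UDG$: replace each disk of radius $r_v$ in a $\DG$-realization by a cluster of unit disks whose union approximates it, together with auxiliary ``frame'' vertices enforcing the correct cross-intersections; if the blow-up is only a constant factor in the number of vertices, Theorem~\ref{thm:UDG} follows from Theorem~\ref{thm:DG} at once. If that route is too lossy, I would instead design the squaring gadget ab initio, encoding a Pythagoras-type identity $d_{\mathrm{out}}^2=d_{\mathrm{in}}\cdot d_{\mathrm{ref}}$ by unit disks whose centers lie close to two nearly-parallel lines with prescribed tangency pattern. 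The delicate point in either approach is \emph{rigidity of the chain}: the strict inequalities defining $\UDG$-realizations leave a small amount of slack at each stage, and one must show, by induction along the chain, that this slack does not accumulate over $\Omega(n)$ stages in a way that washes out the doubly-exponential growth. I expect this rigidity analysis, rather than the combinatorial design of the gadget itself, to be the technical heart of the proof.
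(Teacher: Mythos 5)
Your upper bound argument is essentially the paper's: reduce to a system of $O(n^2)$ constant-degree polynomial inequalities in $O(n)$ variables, invoke Grigor'ev--Vorobjov to get a real solution with coordinates bounded by $2^{2^{O(n)}}$, and then perturb and round to integers. This part is fine (the paper carries out the perturbation by first rescaling to create slack of $\pm 10$ in each constraint before rounding, exactly the step you defer to).

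\textbf{Lower bound.} Here there is a genuine gap: what you give is a plan, not a proof, and the plan's central object --- a constant-size unit-disk ``squaring gadget'' whose output center-distance is forced to be the square of its input center-distance --- is never constructed. You yourself flag the two hard points (building the gadget with equal radii, and controlling slack accumulation over $\Theta(n)$ stages), and neither is resolved. Your first fallback, replacing each disk of a $\DG$-realization by a cluster of unit disks, cannot give a constant-factor blow-up: the disks in the hard $\DG$ instances have doubly exponential radius ratios, so approximating them by unit disks would require doubly exponentially many vertices. Your guess about the proof of Theorem~\ref{thm:DG} is also off --- it does not use a metric squaring gadget with disks of different radii. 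What the paper actually does for both lower bounds is quite different: it first builds (Theorem~\ref{thm:sizeLline}) an oriented line arrangement $\Lcal$ with $O(k)$ lines and a set $S$ of $O(k)$ sign vectors such that \emph{any} line arrangement realizing $S$ has span (ratio of largest to smallest distance between intersection points) at least $2^{2^k}$; the doubly exponential quantity is produced projectively, by encoding repeated squaring via Von Staudt constructions and reading it off as a cross-ratio, so no metric rigidity argument along a chain is needed. This arrangement is then encoded into a unit disk graph with two vertices per line and one per sign vector (Lemma~\ref{lem:UDGembed}); in any $\UDG$-realization the perpendicular bisectors of the center pairs $p(v_i^-),p(v_i^+)$ recover a line arrangement realizing $S$, and Lemma~\ref{lem:gridspan} converts its forced span into a doubly exponential lower bound on the grid size. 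Note also that the multiplicative anchoring you invoke (``the anchor fixes the common radius'') is not available as stated, since $\UDG$-realizations are scale-invariant; any correct version of your gadget would have to force a \emph{ratio} of distances, which is in effect what the cross-ratio/line-arrangement route accomplishes.
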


\noindent
Theorem~\ref{thm:UDG} also improves over the conference version~\cite{McDiarmidMuller10} of this work, where
we proved a lower bound of $\fUDG(n) = 2^{2^{\Omega(\sqrt{n})}}$.

As it happens, $\fSEG(n)$ has been previously studied by Kratochv{\'{\i}}l and Matou{\v{s}}ek~\cite{KratochvilMatousek94},
who gave a lower bound of $\fSEG(n) \geq 2^{2^{\Omega(\sqrt{n})}}$.
Here we will improve over their lower bound, and give a matching upper bound:

\begin{theorem}\label{thm:SEG}
$\fSEG(n) = 2^{2^{\Theta(n)}}$.
\end{theorem}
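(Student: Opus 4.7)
The plan is to establish matching upper and lower bounds separately; the two halves use very different technology.

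For the upper bound $\fSEG(n)\le 2^{2^{O(n)}}$, I would use a quantitative real algebraic geometry argument. A realization of a fixed $G\in\SEG(n)$ is a point in $\eR^{4n}$ (two planar endpoints per segment), and whether two particular segments meet is a Boolean combination of sign conditions on a handful of polynomials of constant degree in the endpoint coordinates (cross-products and dot-products of endpoint differences testing the orientation/side relations). Thus the set of realizations of $G$ is a semi-algebraic set in $\eR^{4n}$ cut out by $O(n^2)$ conditions of degree $O(1)$ with coefficients in $\{0,\pm1\}$. Quantitative sample-point bounds for non-empty sign-condition cells (Basu--Pollack--Roy, after Grigoriev--Vorobjov) then guarantee that whenever this set is non-empty it contains a rational point whose common denominator is at most $2^{2^{O(n)}}$, and moreover the strict inequalities involved remain strict after a small enough rational perturbation. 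Multiplying through by the common denominator produces an integer realization with all coordinates at most $2^{2^{O(n)}}$, and one inspects the bounding square to give the claimed $k(\Acal)$ bound.

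For the lower bound $\fSEG(n)\ge 2^{2^{\Omega(n)}}$, I would construct an explicit family of graphs $G_n\in\SEG(O(n))$ whose intersection pattern geometrically forces doubly exponential spread. The design is a chain: build a constant-size \emph{amplifier} gadget whose combinatorial intersection pattern rigidly forces a distinguished output length (or coordinate gap) to be, up to bounded factors, the \emph{square} of a distinguished input length. Cascading $\Omega(n)$ such gadgets in series lets the exponents of these lengths double at each step, producing a length ratio of $2^{2^{\Omega(n)}}$; together with the integer grid constraint this forces $k(\Acal)\ge 2^{2^{\Omega(n)}}$. The amplifier itself can plausibly be engineered from two transversal ``ruler'' segments cut by auxiliary short segments, so that similar-triangle / intercept-theorem relations (which are the only thing segment intersections can enforce up to projective transformations) are pinned down by the combinatorial structure and deliver the required squaring behaviour.

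The routine step is the upper bound, which is essentially bookkeeping once the problem has been cast as deciding non-emptiness of a semi-algebraic set. The genuine obstacle is the lower bound: one must prove \emph{rigidity} of the amplifier, i.e., that \emph{every} segment realization of the gadget satisfies the forced scaling relation, not just some canonical one. This requires ruling out ``flipped'' or degenerate realizations by making the combinatorial type of the gadget rich enough to pin down the projective arrangement, and then showing that the amplification composes cleanly along the chain so that errors do not accumulate destructively. The payoff of this rigidity analysis is the replacement of the $\sqrt{n}\times\sqrt{n}$-style construction of Kratochv\'{\i}l and Matou\v{s}ek by a \emph{linear} chain of $\Theta(n)$ constant-size gadgets, which is what upgrades the lower bound from $2^{2^{\Omega(\sqrt n)}}$ to $2^{2^{\Omega(n)}}$ and matches the upper bound.
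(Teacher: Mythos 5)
Your upper bound is fine and is essentially the route the paper takes for disks: write the realizations of a fixed $G\in\SEG(n)$ as a semi-algebraic set with $O(n^2)$ bounded-degree constraints and invoke a Grigor'ev--Vorobjov-type sample-point bound, then perturb and round. (The paper instead quotes the Goodman--Pollack--Sturmfels theorem on realizing order types on a $2^{2^{O(n)}}$ grid, after observing that the intersection pattern of segments with endpoints in general position depends only on the order type of the endpoints; but GPS is itself proved via Grigor'ev--Vorobjov, so the two routes are close and yours works.)

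The lower bound, however, has a genuine gap exactly where you locate it, and the gap is not merely technical. Your ``amplifier'' is supposed to force, through the intersection graph alone, that a distinguished output \emph{length} is the square of a distinguished input length. But segment intersection is an open condition and is invariant under affine (indeed projective) maps of the plane, which do not preserve lengths or even ratios of lengths measured along different lines; so no finite intersection pattern can pin down a metric relation of the form $b=a^2$, and a gadget ``engineered from similar-triangle relations'' cannot be rigid in the sense you need. The invariant that \emph{can} be forced is a cross-ratio of four collinear points, and this is what the paper amplifies: a Von Staudt multiplication sequence (a constructible point configuration with $O(1)$ new points per squaring step, hence $O(r)$ points total for cross-ratio $2^{2^r}$ --- this is your linear chain) is encoded into the \emph{sign vectors} of a set of points with respect to an oriented line arrangement (Theorem~\ref{thm:sizeLline}, four lines and eleven points per constructed point), so that any arrangement realizing those sign vectors contains cells in which one can place an exactly projectively equivalent copy of the configuration; the large cross-ratio then forces two intersection points of the arrangement to be $2^{2^{\Omega(n)}}$ times farther apart than two others. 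Transferring this to segment graphs requires a further device you do not supply: the Kratochv\'{\i}l--Matou\v{s}ek order forcing lemma, which guarantees that in \emph{every} realization of an augmented segment graph the relevant segments cross the frame in the prescribed order, so that the lines supporting them realize the prescribed sign vectors. Without (a) replacing the metric invariant by a projective one and (b) a mechanism like sign vectors plus order forcing to make every realization comply, the rigidity claim at the heart of your construction is unproved and, as stated, false.
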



A standard convention (see~\cite{schrijverboek}) is to store rational numbers in the 
memory of a computer as a pair of integers (the denominator and numerator) that are relatively prime 
and those integers are stored in the binary number format.
The bit size of an integer $n \in \Zed$ is 

\[ 
\bsize(n) := 1 + \lceil\log_2(|n|+1)\rceil, 
\]

\noindent 
(the extra one is for the sign) and the bit size of a rational number $q \in \Qu$ is
$\bsize(q) = \bsize(n)+\bsize(m)$ if $q=\frac{n}{m}$ and $n, m$ are
relatively prime.
For $G \in \DG$, let $\gDG(G)$ denote the 
minimum, over all realizations by disks with centers
in $\Qu^2$ and rational radii, of the sum of the bit sizes of the
coordinates of the centers and the radii; and
let $\gDG(n)$ denote the maximum of
$\gDG(G)$ over all $G \in \DG(n)$.
Let us define $\gUDG(n)$ and $\gSEG(n)$ analogously.
By definition of $\fDG(n)$, every 
$G \in \DG(n)$ has a rational realization where each of the $3n$ 
coordinates and radii has numerator
at most $\fDG(n)$ in absolute value and denominator equal to 1.
Hence

\begin{equation}\label{eq:bsize1}
\gDG(n) \leq 3n \cdot \bsize(\fDG(n)) + 6n.
\end{equation}

\noindent
If we multiply all the coordinates and radii 
of a rational realization by the product of their denominators 
we get an integer realization.
In the resulting integer realization each coordinate or radius will have a bit size
that does not exceed the sum of the bit sizes of the
coordinates and radii of the original rational realization. 
(Observe that for two integers $n,m$ we have $\bsize(nm) \leq \bsize(n)+\bsize(m)$.)
Thus:

\begin{equation}\label{eq:bsize2}
\bsize(\fDG(n)) \leq \gDG(n), 
\end{equation}

\noindent
From the definition of $\bsize(.)$ and Theorem~\ref{thm:DG} we have
$\bsize(\fDG(n)) = 2^{\Theta(n)}$.
Combining this with~\eqref{eq:bsize1} and~\eqref{eq:bsize2} we find:

\begin{corollary}
$\gDG(n) = 2^{\Theta(n)}$. \noproof
\end{corollary}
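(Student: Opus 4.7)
The plan is to combine Theorem~\ref{thm:DG} with the two already-established inequalities \eqref{eq:bsize1} and \eqref{eq:bsize2}, and observe that all of this essentially reduces to the fact that $\bsize(N) = \Theta(\log N)$ for positive integers $N$, so that taking bitsize converts the ``outer'' exponent in a doubly-exponential expression into a constant factor.

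First I would record the arithmetic of $\bsize$. From the definition $\bsize(n) = 1 + \lceil \log_2(|n|+1)\rceil$ one has $\bsize(n) = \Theta(\log_2|n|)$ whenever $|n| \geq 2$. Applied to $\fDG(n)$, Theorem~\ref{thm:DG} gives $\log_2 \fDG(n) = 2^{\Theta(n)}$, and hence $\bsize(\fDG(n)) = 2^{\Theta(n)}$.

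Second, I would plug this into the two bounds already derived in the text. The upper bound \eqref{eq:bsize1} yields
\[
\gDG(n) \;\leq\; 3n \cdot \bsize(\fDG(n)) + 6n \;=\; 3n \cdot 2^{\Theta(n)} + 6n \;=\; 2^{\Theta(n)},
\]
where the polynomial factor $3n$ (and the lower-order additive term $6n$) are absorbed into the exponent. The lower bound \eqref{eq:bsize2} yields
\[
\gDG(n) \;\geq\; \bsize(\fDG(n)) \;=\; 2^{\Theta(n)}.
\]
Putting the two together gives $\gDG(n) = 2^{\Theta(n)}$, as required.

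There is no real obstacle here: the content of the corollary is entirely carried by Theorem~\ref{thm:DG} and by the reductions \eqref{eq:bsize1}--\eqref{eq:bsize2}, both of which were justified in the paragraph immediately preceding the statement. The only thing one has to be slightly careful about is the asymptotic bookkeeping, namely that a polynomial prefactor $3n$ multiplied by $2^{\Theta(n)}$ is again $2^{\Theta(n)}$; once this is noted, the proof reduces to a one-line combination of the displayed inequalities.
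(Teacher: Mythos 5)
Your proof is correct and is exactly the argument the paper intends: the corollary is stated with no proof precisely because it follows by combining Theorem~\ref{thm:DG} with~\eqref{eq:bsize1} and~\eqref{eq:bsize2}, using $\bsize(N)=\Theta(\log N)$ and absorbing the polynomial factor $3n$ into $2^{\Theta(n)}$. Nothing is missing.
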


\noindent
Similarly we have

\begin{corollary}
$\gUDG(n) = 2^{\Theta(n)}$.\noproof
\end{corollary}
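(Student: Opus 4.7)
The plan is to mirror the derivation already carried out for \DG{} in the paragraphs above, substituting $\UDG$ for $\DG$ throughout and invoking Theorem~\ref{thm:UDG} instead of Theorem~\ref{thm:DG}. First I would establish the two inequalities
\[
 \bsize(\fUDG(n)) \leq \gUDG(n) \leq (2n+1)\cdot \bsize(\fUDG(n)) + c\cdot n,
\]
for a suitable constant $c$. The upper bound follows from the observation that by definition of $\fUDG(n)$, every $G\in\UDG(n)$ admits a realization in which the $2n$ center coordinates and the single common radius are integers of absolute value at most $\fUDG(n)$; treated as rationals with denominator $1$, each of these $2n+1$ numbers has bit size at most $\bsize(\fUDG(n))+2$.

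For the lower bound I would take a $\UDG$-realization $\Acal$ of $G$ by disks with rational coordinates and a common rational radius whose total bit size equals $\gUDG(G)$, and multiply every coordinate and the radius by the product $D$ of all denominators appearing. Since multiplying a $\UDG$-realization by a positive scalar is again a $\UDG$-realization (all disks are scaled by the same factor, so equal radii remain equal), this produces an integer $\UDG$-realization, and by the submultiplicativity of $\bsize$ noted in the paper, each integer obtained has bit size at most $\gUDG(G)$. In particular, the maximum absolute value of any coordinate or radius — and hence $k$ of the new realization — is at most $2^{\gUDG(G)}$, which gives $\bsize(\fUDG(G)) \leq \gUDG(G)$ and therefore $\bsize(\fUDG(n))\leq \gUDG(n)$.

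Finally I would combine these inequalities with Theorem~\ref{thm:UDG}, which yields $\bsize(\fUDG(n)) = 2^{\Theta(n)}$, to conclude that $\gUDG(n) = 2^{\Theta(n)}$. There is no real obstacle here: the only point that requires any care is the verification that clearing denominators preserves the unit-disk property, i.e.\ that uniform rescaling by the common factor $D$ keeps all radii equal; this is immediate. Everything else is a direct transcription of the $\DG$ argument with the trivial change that the number of rational parameters is $2n+1$ rather than $3n$, which does not affect the $\Theta$ exponent.
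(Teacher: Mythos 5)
Your proposal is correct and follows exactly the route the paper intends: the paper derives the $\DG$ version from the two bit-size inequalities~\eqref{eq:bsize1} and~\eqref{eq:bsize2} together with Theorem~\ref{thm:DG}, and dismisses the $\UDG$ case with ``similarly,'' which is precisely your transcription with $2n+1$ parameters in place of $3n$ and Theorem~\ref{thm:UDG} in place of Theorem~\ref{thm:DG}. Your added check that clearing denominators preserves the equal-radius property is the one genuinely $\UDG$-specific point, and you handle it correctly.
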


\begin{corollary} 
$\gSEG(n) = 2^{\Theta(n)}$. \noproof
\end{corollary}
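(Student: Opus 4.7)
The plan is to mimic exactly the derivation given for $\gDG(n) = 2^{\Theta(n)}$, replacing the bookkeeping for three numbers per disk (two centre coordinates and one radius) by the bookkeeping for four numbers per segment (the two coordinates of each of the two endpoints). The input is Theorem~\ref{thm:SEG}, which tells us $\fSEG(n) = 2^{2^{\Theta(n)}}$, and hence $\bsize(\fSEG(n)) = 2^{\Theta(n)}$ from the definition of $\bsize(\cdot)$.

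First I would establish the upper bound. By Theorem~\ref{thm:SEG}, every $G\in\SEG(n)$ admits an integer $\SEG$-realization in $[-\fSEG(n),\fSEG(n)]^2$. This realization can trivially be viewed as a rational realization whose $4n$ coordinates all have denominator $1$ and numerator of absolute value at most $\fSEG(n)$. Summing the bit sizes gives
\[
\gSEG(n) \;\le\; 4n\cdot \bsize(\fSEG(n)) + 8n,
\]
the analog of~\eqref{eq:bsize1}.

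Next I would prove the matching lower bound. Starting from any rational $\SEG$-realization of a graph $G\in\SEG(n)$, multiply every coordinate of every endpoint by the product of all denominators appearing in the realization. The resulting realization has all endpoints in $\Zed^2$ and represents the same intersection graph (segments, being closed, are mapped to closed segments with the same pairwise intersection pattern under scaling by a positive integer). Since $\bsize(nm) \le \bsize(n)+\bsize(m)$ for integers, each new integer coordinate has bit size bounded by the sum of the bit sizes of all coordinates in the original rational realization. Hence this scaled integer realization witnesses
\[
\bsize(\fSEG(n)) \;\le\; \gSEG(n),
\]
the analog of~\eqref{eq:bsize2}.

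Combining these two inequalities with $\bsize(\fSEG(n)) = 2^{\Theta(n)}$ yields $\gSEG(n) = 2^{\Theta(n)}$. No step is really a ``main obstacle''; the only small point to verify is that scaling all endpoint coordinates by a common positive integer preserves the intersection graph of closed segments, which is immediate since such a scaling is a homothety of the plane and thus preserves incidences.
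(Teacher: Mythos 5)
Your proposal is correct and follows exactly the route the paper intends: the corollary is stated with no proof precisely because it is the verbatim analogue of the $\gDG$ derivation via~\eqref{eq:bsize1},~\eqref{eq:bsize2} and Theorem~\ref{thm:SEG}, with $3n$ replaced by $4n$ for the four endpoint coordinates per segment. Your added remark that clearing denominators (a homothety) preserves the intersection pattern of closed segments is the right small check and matches the paper's reasoning for disks.
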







Before beginning our proofs in earnest we will give a brief overview of the structure of the paper 
and the main ideas in the proofs.

\subsection{Overview of the paper and sketch of the main ideas in the proofs}

The proofs of the doubly exponential upper bounds in Theorems~\ref{thm:DG},~\ref{thm:UDG}
and~\ref{thm:SEG} can be found in section~\ref{sec:ub}.
The upper bound of Theorem~\ref{thm:SEG} is a direct consequence of a
result of Goodman, Pollack and Sturmfels~\cite{GPS90}, and the
upper bounds in Theorems~\ref{thm:DG} and~\ref{thm:UDG} are both relatively
straightforward consequences of a result of Grigor'ev and Vorobjov~\cite{grigorevvorobjov88}.

The proofs of the lower bounds are substantially more involved.
The main ingredient is Theorem~\ref{thm:sizeLline}, which gives a
construction of an oriented line arrangement $\Lcal$ and a set of points
$\Pcal$ 
with the property that whenever some set of points $\Pcaltil$ have the same sign vectors
with respect to an oriented line arrangement $\Lcaltil$ as the points $\Pcal$ have with respect to $\Lcal$
then there are two pairs of intersection points of the lines
$\Lcaltil$ such that the distance between one pair is a factor $2^{2^{\Omega(|\Lcal|)}}$ larger than 
the distance between the other pair.
An important property of the construction is that the number of points $|\Pcal|$ is linear in the
number of lines $|\Lcal|$. 
This ultimately allows us to give lower bounds that are doubly exponential in the number of
vertices $n$ rather than the {\em square root} $\sqrt{n}$. 
(And thus it allows us to get sharp lower bounds, and improve over our own result for 
unit disk graphs in the conference version~\cite{McDiarmidMuller10} of this paper, and the lower bound of 
Kratochv{\'{\i}}l and Matou{\v{s}}ek~\cite{KratochvilMatousek94} for segment graphs.)
A brief sketch of the construction of $\Lcal$ and $\Pcal$ is as follows.
We start with a constructible point configuration $\Qcal$ such that, in 
every point configuration $\Qtil$ that is projectively equivalent to it, there are four points 
with a large cross ratio. (We construct such a constructible point configuration using Von Staudt sequences, a 
classical way to encode arithmetic operations into point configurations.)
We now construct $\Lcal, \Pcal$ step-by-step by adding four lines and eleven points
for each $q\in\Qcal$.
These four lines and eleven points per point of $\Qcal$ are placed in 
such a way (inspired by constructions of Shor~\cite{Shor91} and Jaggi et al.~\cite{JaggiEtAl89}) that 
in any $\Lcaltil$ for which some point set $\Pcaltil$ has the same sign vectors
with respect to $\Lcaltil$ as $\Pcal$ has with respect to $\Lcal$ we can construct a
point configuration $\Qcaltil$ that is projectively equivalent 
to $\Qcal$ and whose points lie in prescribed cells of $\Lcaltil$.

With our main tool at hand we can construct unit disk graphs, disk graphs, and segment
graphs that need large parts of the integer grid to be realized.
For unit disk graphs the idea is to take the oriented line arrangement from Theorem~\ref{thm:sizeLline}
and construct a unit disk graph with a pair of vertices for each line of $\Lcal$, one corresponding to each 
halfplane defined by the line, and one vertex for each point of $\Pcal$.
The constructed unit disk graph is such that, in every realization of it, we can construct
a line arrangement $\Lcaltil$ and a set of points $\Pcaltil$ out of the coordinates
of the centers of the disks such that the position of $\Pcaltil$ with respect to 
$\Lcaltil$ is the same as the position of $\Pcal$ with respect to $\Lcal$.
It then follows from some calculations that at least one coordinate or radius is
doubly exponentially large.

The construction for disk graphs is very similar, except that now
we are forced to place an induced copy of a certain graph $H$ for each point in $\Pcal$ rather than a 
single vertex.
The graph $H$ has the property that for every realization $\Dcal$ of it there is
a point $p(\Dcal)$ such that any disk that intersects all disks of $\Dcal$ contains
$p(\Dcal)$.
The construction of $H$ is pretty involved.
It relies heavily on the fact (Lemma~\ref{lem:trianglefreeplanar} below) that in every
realization of a triangle-free disk graph $G$ with minimum degree
at least two, the centers of the disks define a straight-line drawing of $G$.
  
For segment graphs the construction of a segment graph that needs a large portion of the integer 
grid makes use of a convenient result of Kratochv{\'{\i}}l and Matou{\v{s}}ek~\cite{KratochvilMatousek94}, 
the ``order forcing lemma".
This time we use one vertex per line of $\Lcal$, and two per point of
$\Pcal$, and a constant number of vertices that are needed to set the construction up in such a way that we
can apply the order forcing lemma.

In section~\ref{sec:CPC} we do some preliminary work
needed for the proof of Lemma~\ref{thm:sizeLline}
in section~\ref{sec:OLA}.
The material in section~\ref{sec:CPC} is classical and may be 
skimmed by readers familiar with constructible point configurations
and Von Staudt sequences.
In section~\ref{sec:OLA} we prove Theorem~\ref{thm:sizeLline}, our main tool.
%
Section~\ref{sec:UDGlb} contains the lower bound for unit disk graphs, section~\ref{sec:DGlb} 
contains the lower bound for disk graphs and section~\ref{sec:SEGlb} has the
lower bound for segment graphs.
As mentioned earlier, upper bounds are proved in Section~\ref{sec:ub}.

\section{Constructible point configurations\label{sec:CPC}}

Although we are mainly interested in intersection graphs of objects in the (ordinary) euclidean plane it is 
convenient to do some preliminary work in the projective setting.
Recall that the {\em real projective plane} $\RP^2$ has as its
points the one-dimensional linear subspaces of $\eR^3$, and as its lines the 
two-dimensional linear subspaces of $\eR^3$. 
We denote a point of $\RP^2$ in homogeneous coordinates
as $(x:y:z) := \{ (\lambda x,\lambda y,\lambda z)^T : \lambda\in\eR \}$ -- where $(x,y,x)\neq (0,0,0)$.
We say that $v \in \eR^3 \setminus \{0\}$ is a {\em representative} of 
$p = (x:y:z) \in \RP^2$ if $v \in p$.
The euclidean plane $\eR^2$ is contained the projective plane
via the {\em canonical embedding} $(x,y)^T \mapsto (x:y:1)$.
The points of $\RP^2$ that do not lie on $\eR^2$
are all points of the form $(x:y:0)$, and they form 
a line of $\RP^2$ (they correspond to the plane $\{z=0\}$ in $\eR^3$), called 
the {\em line at infinity}.
A convenient property of the projective plane is that {\em every two lines meet in a point}.
If two lines are parallel in the euclidean plane, then they have an intersection point
on the line at infinity in the projective plane.
A {\em projective transformation} is the action that a non-singular linear map
$T:\eR^3\to\eR^3$ induces on $\RP^2$.
(Observe that it sends the points of $\RP^2$ to points of $\RP^2$ and the
lines of $\RP^2$ to lines of $\RP^2$.)
Recall that an {\em isometry} of the euclidean plane is a map that preserves distance, and 
that an isometry can always be written as a translation followed by the
action of an orthogonal linear map.
We omit the straightforward proof of the following
observation.

\begin{lemma}\label{lem:isometry}
If $f:\eR^2\to\eR^2$ is an isometry then there exists
a projective transformation $T:\RP^2\to\RP^2$ such that
the restriction of $T$ to $\eR^2$ coincides with $f$. \noproof
\end{lemma}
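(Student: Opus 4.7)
The plan is to exhibit a concrete $3\times 3$ matrix $T$ that realizes $f$. Every isometry of the euclidean plane can be written as $f(v) = Av + b$, where $A$ is a $2\times 2$ orthogonal matrix and $b \in \eR^2$. I would take the block matrix
\[
T \;=\; \begin{pmatrix} A & b \\ 0 & 1 \end{pmatrix},
\]
where the top-left $2\times 2$ block is $A$, the top-right column is $b$, and the bottom row is $(0,0,1)$. Since $\det(T) = \det(A) = \pm 1 \neq 0$, the linear map $T : \eR^3 \to \eR^3$ is non-singular and therefore induces a projective transformation of $\RP^2$, which I will also denote by $T$.

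Next I would verify that $T$ agrees with $f$ on $\eR^2$ under the canonical embedding $(x,y)^T \mapsto (x:y:1)$. For any $(x,y)^T \in \eR^2$, a direct computation gives
\[
T\begin{pmatrix} x \\ y \\ 1 \end{pmatrix} \;=\; \begin{pmatrix} A\binom{x}{y} + b \\ 1 \end{pmatrix} \;=\; \begin{pmatrix} f(x,y) \\ 1 \end{pmatrix},
\]
so that $T(x:y:1) = (f(x,y) : 1)$, which is precisely the canonical embedding of $f(x,y)$. Hence the restriction of $T$ to $\eR^2$ coincides with $f$.

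There is essentially no obstacle here: the argument is a matter of picking the right matrix and doing a one-line multiplication. The only mild point worth flagging is that one must place $b$ in the last column (so that the $z = 1$ slice is translated by $b$) rather than in the last row; putting $b$ in the last row would instead send the affine plane $\{z=1\}$ off itself and produce a genuine projectivity that is not an isometry.
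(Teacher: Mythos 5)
Your proof is correct, and it is precisely the ``straightforward proof'' that the paper omits: the paper even recalls, immediately before the lemma, that an isometry is a translation composed with an orthogonal map, which is exactly the decomposition $f(v)=Av+b$ your block matrix $T=\left(\begin{smallmatrix} A & b \\ 0 & 1\end{smallmatrix}\right)$ encodes. Nothing further is needed.
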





\noindent
For vectors $u,v,w \in \eR^3$ we will write 

\[ 
[u,v,w] := \det\left(
\begin{array}{ccc}
u_x & u_y & u_z \\
v_x & v_y & v_z \\
w_x & w_y & w_z 
\end{array}
\right).
\]

\noindent
If $a,b,c,d \in \RP^2$ are four collinear points, and
$p \in \RP^2$ is a point not on the line spanned by them, then the
{\em cross ratio} can be defined as:

\begin{equation}\label{eq:crossdef}
\cross(a,b,c,d) := 
\frac{[p,a,c][p,b,d]}{[p,a,d][p,b,c]}.
\end{equation}

\noindent
Here we take arbitrary representatives of $a,b,c,d,p$ in the right-hand side.
(That is, if $a=(a_x:a_y:a_z)$ then we may take $(\lambda a_x,\lambda a_y, \lambda a_z)$ for
any $\lambda \in \eR \setminus \{0\}$, etc.)
To see that~\eqref{eq:crossdef} is a valid definition, recall that the determinant is linear in
each of its rows (meaning that 
$[u_1+u_2,v,w] = [u_1,v,w] + [u_2,v,w], [\lambda u,v,w] = \lambda [u,v,w]$ etc.).
Thus, if instead of $(a_x,a_y,a_z)$ we take $(\lambda a_x,\lambda a_y, \lambda a_z)$ in~\eqref{eq:crossdef}
then we just get a factor of $\lambda$ in both the denominator and the numerator. Similarly for $b,c,d$.
Let $\ell$ denote the line that $a,b,c,d$ are on, and let $H \subseteq \eR^3$ denote 
the corresponding two-dimensional linear subspace.
To see that the choice of $p$ does not matter, let us fix a $u \in \eR^3 \setminus \{0\}$ that is orthogonal to $H$.

Pick $p \in \RP^2 \setminus \ell$, and let $z$ be an arbitrary 
representative of $p$. So in other words $z \in \eR^3 \setminus H$. 
We can write $z = u_1 + \lambda u$ with $u_1 \in H$.
Then we get $[z,a,c] = [u_1,a,c] + \lambda [u,a,c] = \lambda[u,a,c]$, and similarly
$[z,a,d] = \lambda[u,a,d], [z,b,c] = \lambda[u,b,c], [z,b,d] = \lambda[u,b,d]$.
We see that

\[ 
\frac{[z,a,c][z,b,d]}{[z,a,d][z,b,c]} 
= 
\frac{[u,a,c][u,b,d]}{[u,a,d][u,b,c]}, 
\]

\noindent
no matter which $p \in \RP^2 \setminus \ell$ resp.~$z\in \eR^3 \setminus H$ we start from.
So the definition of $\cross(a,b,c,d)$ in~\eqref{eq:crossdef} does indeed not depend
on the choice of representatives for $a,b,c,d$ or the choice of $p\in\RP^2 \setminus \ell$.
 
Next, let us also remark that, since $\det(AB) = \det(A)\det(B)$ for square matrices $A,B$, we have
$[Tu,Tv,Tw] = \det(T)[u,v,w]$ for any projective transformation $T$.
This implies that the cross ratio is preserved under projective transformations:

\begin{lemma}\label{lem:crossproj}
If $T$ is a projective transformation and $a,b,c,d \in \RP^2$ are collinear, then
$\cross(a,b,c,d) = \cross(Ta,Tb,Tc,Td)$.\noproof
\end{lemma}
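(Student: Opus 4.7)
The plan is to verify the identity directly from the definition in~\eqref{eq:crossdef}, using the observation stated just before the lemma that $[Tu,Tv,Tw] = \det(T)[u,v,w]$ for any non-singular linear map $T$ and any $u,v,w \in \eR^3$. So essentially the proof should be a one-line computation; most of the work is just checking that the formula on the right-hand side of the lemma is well-defined.

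First I would note that since $T$ comes from a non-singular linear map on $\eR^3$, it is a bijection on $\RP^2$ sending lines to lines. In particular, if $\ell$ denotes the projective line containing $a,b,c,d$, then $T\ell$ is a projective line containing $Ta,Tb,Tc,Td$, so these four image points are collinear and the cross ratio on the right is defined. Moreover, any point $p \in \RP^2 \setminus \ell$ satisfies $Tp \in \RP^2 \setminus T\ell$, so $Tp$ is a legitimate choice of auxiliary point in the formula for $\cross(Ta,Tb,Tc,Td)$. Similarly, if $v \in \eR^3$ is a representative of a point $q \in \RP^2$, then $Tv$ is a representative of $Tq$; so the natural representatives to plug in on the image side are $Ta,Tb,Tc,Td,Tp$ (viewed as vectors in $\eR^3$).

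With these choices, applying~\eqref{eq:crossdef} and the identity $[Tu,Tv,Tw] = \det(T)[u,v,w]$ to each of the four determinants gives
\[
\cross(Ta,Tb,Tc,Td)
= \frac{[Tp,Ta,Tc]\,[Tp,Tb,Td]}{[Tp,Ta,Td]\,[Tp,Tb,Tc]}
= \frac{\det(T)^2 \, [p,a,c]\,[p,b,d]}{\det(T)^2 \, [p,a,d]\,[p,b,c]}
= \cross(a,b,c,d),
\]
where the cancellation is valid since $\det(T) \neq 0$ by non-singularity of $T$. There is essentially no obstacle here: once the preliminaries about well-definedness (which the paper has already laid out in detail just before the lemma) are in place, the claim is an immediate consequence of multilinearity of the determinant in its rows and the multiplicativity of $\det$.
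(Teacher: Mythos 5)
Your proof is correct and follows exactly the route the paper intends: the lemma is stated with no proof precisely because it is the immediate consequence of the preceding observation that $[Tu,Tv,Tw]=\det(T)[u,v,w]$, with the two factors of $\det(T)^2$ cancelling in the cross-ratio formula. Your additional checks of well-definedness (collinearity of the images, $Tp$ off the image line, representatives mapping to representatives) are accurate and only make the argument more complete.
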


\noindent
In the projective plane there is no obvious way to define a notion of distance, but
for collinear points in the euclidean plane the cross ratio can be 
related to euclidean distances:

\begin{lemma}\label{lem:crosseucl}
If $a,b,c,d \in \eR^2$ are collinear, then 

\[ |\cross(a,b,c,d)| = 
\frac{\norm{a-c}\cdot\norm{b-d}}{\norm{a-d}\cdot\norm{b-c}}. \]

\end{lemma}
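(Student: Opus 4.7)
The plan is to unpack the definition of the cross ratio in \eqref{eq:crossdef} using convenient representatives, and to identify the $3\times 3$ determinants appearing there with signed triangle areas in the euclidean plane.

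Since the right-hand side of \eqref{eq:crossdef} does not depend on the choice of the auxiliary point $p$, I would first pick any point $p \in \eR^2$ not on the line $\ell$ through $a,b,c,d$, and take as representatives the canonical lifts $\hat a = (a_x, a_y, 1)^T$, and similarly $\hat b,\hat c,\hat d,\hat p$. Let $h$ denote the euclidean distance from $p$ to $\ell$.

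The key observation is that for any two points $u,v \in \eR^2$, the determinant $[\hat p, \hat u, \hat v]$ equals twice the signed area of the triangle $puv$ — a standard fact, verified by expanding along the last column and comparing to the shoelace formula. In absolute value this determinant is therefore $\norm{u-v}\cdot h$, since $\norm{u-v}$ is the base length and $h$ the corresponding height; crucially, the same $h$ appears for every such pair $(u,v)$ on $\ell$, because $p$ is fixed and $u,v \in \ell$.

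Substituting this into the four determinants in \eqref{eq:crossdef} coming from the pairs $(a,c)$, $(b,d)$, $(a,d)$, $(b,c)$, the four copies of $h$ cancel between numerator and denominator, leaving exactly $\norm{a-c}\cdot\norm{b-d}/\bigl(\norm{a-d}\cdot\norm{b-c}\bigr)$ up to sign; taking absolute values yields the claim. There is no real obstacle here: the only non-trivial ingredient is the determinant-area identity, which is elementary, and the cancellation of the $h$-factors is what makes the choice of $p$ irrelevant in the final expression — mirroring the independence already established in the discussion following \eqref{eq:crossdef}.
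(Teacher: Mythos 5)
Your proof is correct. The determinant--area identity you invoke is right: for canonical lifts $\hat p,\hat u,\hat v$ of points of $\eR^2$, the determinant $[\hat p,\hat u,\hat v]$ is twice the signed area of the triangle $puv$, so its absolute value is $\norm{u-v}\cdot h$ with $h$ the distance from $p$ to the line $\ell$; the four factors of $h$ cancel in the cross ratio, giving the claim. The route differs mildly from the paper's: the paper first normalizes the configuration, applying an isometry to move $\ell$ onto the $x$-axis (invoking Lemma~\ref{lem:isometry} and Lemma~\ref{lem:crossproj} to see the cross ratio is unchanged), and then evaluates the determinants explicitly with the specific choice $p=(1:1:1)$, obtaining $[p,a,c]=\gamma-\alpha=\pm\norm{a-c}$ directly. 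You instead keep the line in general position and use the geometric meaning of the $3\times 3$ determinant to evaluate all four determinants at once. What your version buys is self-containedness -- it does not lean on the isometry and projective-invariance lemmas, and it makes transparent \emph{why} the auxiliary point $p$ drops out (the common height $h$ cancels), which complements the algebraic independence argument given after~\eqref{eq:crossdef}. What the paper's version buys is that the computation reduces to two-line determinant evaluations with no geometric input beyond the already-established invariance lemmas. Both are complete; either would serve.
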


\begin{proof}
If we apply an isometry that maps the line that contains $a,b,c,d$ 
to the $x$-axis, then the cross-ratio does not change by Lemma~\ref{lem:isometry} and 
Lemma~\ref{lem:crossproj}.
Hence we can assume $a = (\alpha,0)^T,b=(\beta,0)^T,c=(\gamma,0)^T,d=(\delta,0)^T$
for some $\alpha,\beta,\gamma,\delta\in\eR$.
Or, in projective terms we have $a = (\alpha:0:1),
b=(\beta:0:1), c=(\gamma:0:1), d=(\delta:0:1)$.
Taking $p = (1:1:1)$ we see that 

\begin{equation}\label{eq:acdet}
[p,a,c] = \det\left( \begin{array}{ccc}
1&1&1 \\ \alpha&0&1\\ \gamma&0&1 \end{array}\right)
= \gamma-\alpha = \pm\norm{a-c},
\end{equation}

\noindent
and similarly $[p,a,d] = \pm\norm{a-d}, [p,b,c]=\pm\norm{b-c}, 
[p,b,d] = \pm\norm{b-d}$. This proves the lemma.
\end{proof}

A {\em point configuration} is a tuple $\Pcal = (p_1,\dots,p_n)$ of (labelled) points 
in the projective plane.
If all the points lie in the euclidean plane $\eR^2$ then we speak of a {\em euclidean point configuration}.
For two distinct points $p,q$ we shall denote by $\ell(p,q)$ the unique line through $p$ and $q$.
We call a point configuration {\em constructible} if

\begin{enumerate}
\item[\constr{1}] No three of $p_1,p_2,p_3,p_4$ are collinear;
\item[\constr{2}] For each $i\geq 5$ there are $j_1,j_2,j_3,j_4 < i$ such that 
$\{p_i\} = \ell(p_{j_1},p_{j_2})\cap\ell(p_{j_3},p_{j_4})$.
\end{enumerate}

\noindent
We will say that two point configurations 
$\Pcal = (p_1,\dots, p_n), \Pcaltil = (\ptil_1,\dots,\ptil_n)$ 
are {\em projectively equivalent} if there exists a projective
transformation $T$ such that $\ptil_i = Tp_i$ for all $i=1,\dots,n$.
Observe that if $T$ is a projective transformation, then 
$T[\ell(p,q)] = \ell(Tp,Tq)$. Thus:

\begin{lemma}\label{lem:constrproj}
If $\Pcal$ is a constructible point configuration and 
$\Pcaltil$ is projectively equivalent to $\Pcal$ then
$\Pcaltil$ is also constructible.\noproof
\end{lemma}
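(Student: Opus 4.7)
The plan is to verify that the two defining properties \constr{1} and \constr{2} are both preserved under projective transformations, using the observation already made just before the lemma statement that $T[\ell(p,q)] = \ell(Tp,Tq)$ for any projective transformation $T$ and distinct points $p,q\in\RP^2$. Since $T$ is induced by a non-singular linear map on $\eR^3$, it is a bijection on $\RP^2$ that sends lines (two-dimensional subspaces) to lines.

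For \constr{1}, I would argue contrapositively: suppose three of $\ptil_1,\ptil_2,\ptil_3,\ptil_4$ lay on a common line $\elltil$. Applying $T^{-1}$, which is also a projective transformation, we get that the three corresponding points among $p_1,p_2,p_3,p_4$ all lie on the line $T^{-1}[\elltil]$, contradicting \constr{1} for $\Pcal$.

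For \constr{2}, fix $i \geq 5$ and let $j_1,j_2,j_3,j_4 < i$ be the indices provided by \constr{2} for $\Pcal$, so that $\{p_i\} = \ell(p_{j_1},p_{j_2}) \cap \ell(p_{j_3},p_{j_4})$. Because $T$ is a bijection on $\RP^2$, it commutes with taking intersections of sets, and by the identity recalled above it sends each line $\ell(p_{j_a},p_{j_b})$ to $\ell(Tp_{j_a},Tp_{j_b}) = \ell(\ptil_{j_a},\ptil_{j_b})$. Therefore
\[
\{\ptil_i\} = \{Tp_i\} = T\bigl[\ell(p_{j_1},p_{j_2}) \cap \ell(p_{j_3},p_{j_4})\bigr] = \ell(\ptil_{j_1},\ptil_{j_2}) \cap \ell(\ptil_{j_3},\ptil_{j_4}),
\]
which is exactly \constr{2} for $\Pcaltil$ with the same witness indices $j_1,j_2,j_3,j_4$.

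There is essentially no obstacle here: the whole content is the line-preserving, intersection-preserving, and bijective nature of projective transformations, all of which are standard and implicit in the definition already given. The lemma is really just recording that the combinatorial definition of constructibility is projectively invariant, and this is presumably why the authors simply write \noproof.
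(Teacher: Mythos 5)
Your proof is correct and is exactly the argument the paper intends: the lemma is stated with no proof precisely because it follows from the observation $T[\ell(p,q)]=\ell(Tp,Tq)$ together with the bijectivity of projective transformations, which is what you have spelled out. Nothing is missing.
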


\noindent
Recall that we say a configuration of points is {\em in general position} if 
no three of them are collinear.
The following observation will be needed in the next section.

\begin{lemma}\label{lem:4ptsprojeq}
If $\Pcal = (p_1,p_2,p_3,p_4)$ and $\Pcaltil = (\ptil_1,\ptil_2,\ptil_3,\ptil_4)$
are two point configurations on 4 points in general position, then
$\Pcal$ and $\Pcaltil$ are projectively equivalent. 
\end{lemma}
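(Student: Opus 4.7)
The plan is to prove that any four points in general position are projectively equivalent to the \emph{standard frame} $\Ecal = ((1{:}0{:}0),(0{:}1{:}0),(0{:}0{:}1),(1{:}1{:}1))$; composing the two resulting projective transformations then yields the required equivalence between $\Pcal$ and $\Pcaltil$.

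First I would pick representatives $v_1,v_2,v_3,v_4 \in \eR^3\setminus\{0\}$ of $p_1,p_2,p_3,p_4$. The general position assumption says no three of the $p_i$ are collinear, which translates to the statement that no three of the $v_i$ lie in a common two-dimensional linear subspace of $\eR^3$; in particular $v_1,v_2,v_3$ are linearly independent. Write
\[
v_4 = \alpha_1 v_1 + \alpha_2 v_2 + \alpha_3 v_3.
\]
If some $\alpha_i=0$ then $v_4$ lies in $\spanH(v_j,v_k)$ with $\{i,j,k\}=\{1,2,3\}$, making $p_4,p_j,p_k$ collinear and contradicting general position. So each $\alpha_i\neq 0$, and I may replace $v_i$ by $\alpha_i v_i$ (still a representative of $p_i$) to obtain representatives satisfying $v_4 = v_1+v_2+v_3$.

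Next, define $T:\eR^3\to\eR^3$ to be the unique linear map sending $e_i\mapsto v_i$ for $i=1,2,3$. Since $v_1,v_2,v_3$ are linearly independent, $T$ is nonsingular and induces a projective transformation (also denoted $T$) on $\RP^2$. By construction $T(1{:}0{:}0)=p_1$, $T(0{:}1{:}0)=p_2$, $T(0{:}0{:}1)=p_3$, and
\[
T(1{:}1{:}1) = (v_1+v_2+v_3) = v_4,
\]
so $T(1{:}1{:}1)=p_4$. Hence $T$ takes $\Ecal$ to $\Pcal$. The identical argument applied to $\Pcaltil$ produces a projective transformation $\Ttil$ taking $\Ecal$ to $\Pcaltil$, and $\Ttil\circ T^{-1}$ is the required projective transformation mapping $\Pcal$ to $\Pcaltil$.

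There is no real obstacle here; the only point that needs a little care is the translation between the geometric hypothesis \emph{no three points are collinear in $\RP^2$} and the linear-algebraic statement that the corresponding representatives in $\eR^3$ are in sufficiently general linear position (used both to secure linear independence of $v_1,v_2,v_3$ and to secure $\alpha_i\neq 0$).
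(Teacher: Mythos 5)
Your proposal is correct and follows essentially the same route as the paper: reduce to the standard frame $(1{:}0{:}0),(0{:}1{:}0),(0{:}0{:}1),(1{:}1{:}1)$, expand $v_4$ in terms of $v_1,v_2,v_3$ with all coefficients nonzero by general position, and define the linear map on the basis accordingly (your rescaling of representatives is just a cosmetic repackaging of the paper's $e_i\mapsto\lambda_i v_i$). No issues.
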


\begin{proof}
Since the inverse of a projective transformation is again a projective transformation and
the composition of two projective transformations is a projective transformation, it
suffices to prove the result for
$p_1 = (1:0:0), p_2=(0:1:0), p_3=(0:0:1), p_4 = (1:1:1)$
and $\ptil_1,\ptil_2,\ptil_3,\ptil_4$ arbitrary (but in general position).
Let us pick representatives $v_i \in\eR^3\setminus\{0\}$ of $\ptil_i$ for $i=1,\dots,4$.
Then no $v_i$ is a linear combination of only two of the other $v$s.
Hence there are nonzero $\lambda_i$s such that:

\[ 
v_4 = \lambda_1v_1+\lambda_2v_2+\lambda_3v_3. 
\]

\noindent
Let us now define a linear map
$T:\eR^3\to\eR^3$ by setting 
$e_i\mapsto\lambda_iv_i$ for $i=1,\dots,3$.
Then it is easy to see that, when viewed as a projective transformation, $T$ in fact maps $p_i$ to $\ptil_i$ for
$i=1,\dots,4$.
\end{proof}

\noindent
The so-called {\em Von Staudt sequences}, originally invented by Von Staudt~\cite{VonStaudt1847}
in 1847, allow us to encode arithmetic operations in terms of constructible point configurations.
Let us write

\begin{equation}\label{eq:P0PinfQRdef}
\begin{array}{l}
P_0 := (0:0:1), \quad P_{\infty} := (1:0:0), \\
Q := (0:1:0), \quad R:= (1:1:1).
\end{array}
\end{equation}

\noindent
And, for $a \in \eR$ let us set

\begin{equation}\label{eq:Pxdef}
P_a := (a:0:1).
\end{equation}

\noindent
The idea of the Von Staudt sequences is that, given a
point configuration that contains $P_0,P_{\infty},R,Q$
we can ``construct" the point $P_1$ as an intersection point (as in~\constr{2} above), and
if the point configuration also contains $P_a, P_b$ for some $a,b\in\eR$ then we can ``construct" the 
point $P_{a+b}$ resp.~$P_{a\cdot b}$
by defining a number of intersection points (as in \constr{2} above) the last of which will be 
$P_{a+b}$ resp.~$P_{a\cdot b}$.
There are also Von Staudt sequences for subtraction and division, but we shall not need them here.

The Von Staudt sequence for one is:

\begin{equation}\label{eq:one}
\{O_1\} := \ell(R,Q) \cap \ell(P_0,P_{\infty}).
\end{equation}

\noindent
See figure~\ref{fig:VonStaudt}, left.
The Von Staudt sequence for addition is to set (in this order):

\begin{equation}\label{eq:addition}
\begin{array}{l}
\{A_1\} = \ell(R,P_{\infty})\cap\ell(P_a,Q), \\
\{A_2\} = \ell(P_0,A_1)\cap\ell(P_{\infty},Q), \\
\{A_3\} = \ell(P_b,A_2)\cap\ell(R,P_{\infty}), \\
\{A_4\} = \ell(A_3,Q)\cap\ell(P_0,P_{\infty}).
\end{array}
\end{equation}

\noindent
See figure~\ref{fig:VonStaudt}, middle.
The Von Staudt sequence for multiplication is to set (in this order):

\begin{equation}\label{eq:multiplication}
\begin{array}{l}
\{M_1\} = \ell(P_0,R)\cap\ell(P_b,Q), \\
\{M_2\} = \ell(P_a,R)\cap\ell(P_{\infty},Q), \\
\{M_3\} = \ell(M_1,M_2)\cap\ell(P_0,P_{\infty}). 
\end{array}
\end{equation}

\noindent
See figure~\ref{fig:VonStaudt}, right.

\begin{figure}[!ht]
\begin{center}
\hspace{-1.2cm}\input{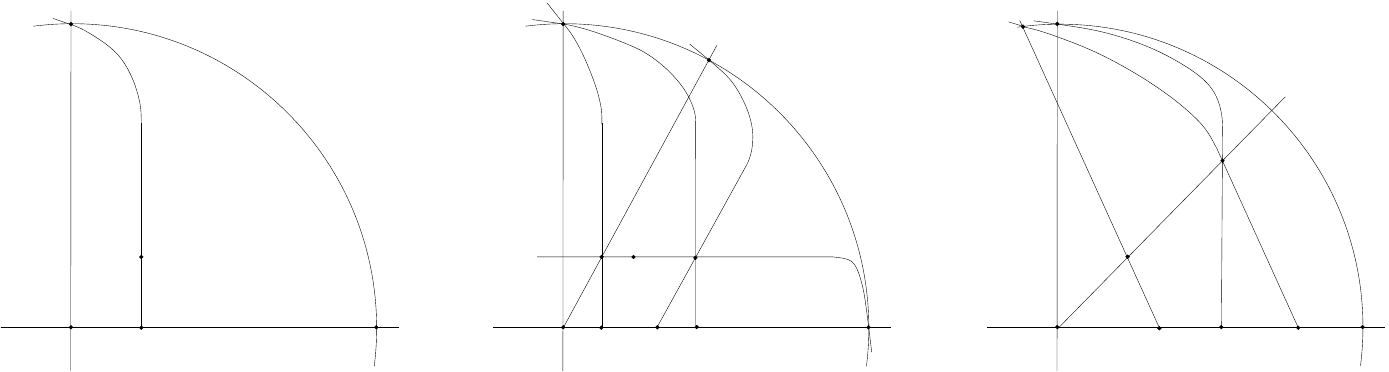tex_t}
\end{center}
\caption{The Von Staudt constructions for one, left (where $O_1$ is shown as $P_1$), addition, middle
(where $A_4$ is shown as $P_{a+b}$) and multiplication, right
(where $M_3$ is shown as $P_{a\cdot b}$).
The line at infinity (the line through $P_{\infty}$ and $Q$) is drawn as a circular arc.\label{fig:VonStaudt}}
\end{figure}

\begin{lemma}\label{lem:VonStaudtCorrect}
Let $P_0,P_\infty,Q,R$ be as defined in~\eqref{eq:P0PinfQRdef} and
for arbitrary $a,b \in \eR$ let $P_a,P_b$ be as defined by~\eqref{eq:Pxdef}.
Then the following hold.
\begin{enumerate}
\item\label{itm:P1corr}
Let $O_1$ be as defined by~\eqref{eq:one}: 
then $O_1 = P_1$;
\item\label{itm:addcorr}
Let $A_1,\dots,A_4$ be as defined by~\eqref{eq:addition}:
then $A_4 = P_{a+b}$;
\item\label{itm:multcorr}
Let $M_1,M_2,M_3$ be as defined by~\eqref{eq:multiplication}:
then $M_3 = P_{a\cdot b}$.
\end{enumerate} 
\end{lemma}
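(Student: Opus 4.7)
The plan is to verify all three statements by direct computation in homogeneous coordinates, using the values fixed in~\eqref{eq:P0PinfQRdef} and~\eqref{eq:Pxdef}. The main tool is the observation that a point $(x:y:z)\in\RP^2$ lies on the line $\ell(p,q)$ spanned by two distinct points $p,q$ if and only if $[p,q,(x,y,z)^T]=0$, since this is precisely the condition that representatives of $p$, $q$ and $(x,y,z)^T$ be linearly dependent. Expanding this $3\times 3$ determinant along its third row gives a homogeneous linear equation in $x,y,z$ for $\ell(p,q)$, and each intersection point in the construction is then obtained by solving the resulting $2\times 2$ system in projective coordinates.

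Part~\ref{itm:P1corr} is essentially immediate: the line $\ell(P_0,P_\infty)$ has equation $y=0$, the line $\ell(R,Q)$ has equation $x=z$, and their unique intersection in $\RP^2$ is $(1:0:1)=P_1$. For part~\ref{itm:addcorr} I would trace through~\eqref{eq:addition} one step at a time; several of the lines involved are independent of $a,b$ (namely $\ell(R,P_\infty)=\{y=z\}$, $\ell(P_\infty,Q)=\{z=0\}$ and $\ell(P_0,P_\infty)=\{y=0\}$), and the remaining calculations give, in order, $A_1=(a:1:1)$, $A_2=(a:1:0)$, $A_3=(a+b:1:1)$ and finally $A_4=(a+b:0:1)=P_{a+b}$. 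Part~\ref{itm:multcorr} is handled in the same manner: one obtains $M_1=(b:b:1)$, $M_2=(1-a:1:0)$, and $\ell(M_1,M_2)=\{x=(1-a)y+abz\}$, whose intersection with $\{y=0\}$ is $(ab:0:1)=P_{a\cdot b}$.

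There is no real obstacle beyond the careful bookkeeping of a handful of $3\times 3$ determinants. All intermediate line equations are linear in the coordinates and linear in $a,b$, so potentially degenerate values such as $a=0$ or $b=0$ are automatically handled by the same formulas (one only needs to check along the way that the two lines being intersected are distinct, which is immediate from the homogeneous equations). A more conceptual approach via Lemma~\ref{lem:4ptsprojeq} and projective invariance would also work, but since the four base points $P_0,P_\infty,Q,R$ already come with fixed coordinates the direct computation is both shortest and most transparent.
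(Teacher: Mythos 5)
Your proposal is correct, and all of the intermediate coordinates check out ($A_1=(a:1:1)$, $A_2=(a:1:0)$, $A_3=(a+b:1:1)$, $M_1=(b:b:1)$, $M_2=(1-a:1:0)$, and the line $\ell(M_1,M_2)=\{x=(1-a)y+abz\}$ all follow from the cross-product/determinant computation), but your route is genuinely different from the paper's. The paper transfers everything to the euclidean picture via the canonical embedding $(x,y)^T\mapsto(x:y:1)$ and argues geometrically: lines through $P_\infty$ are horizontal, lines through $Q$ are vertical, and parallelism corresponds to meeting on the line at infinity; the addition sequence is then verified by observing that the triangle $P_bA_3A_4$ is a translate of $P_0A_1P_a$, and the multiplication sequence by a similar-triangles argument comparing $P_0RP_a$ with $P_0M_1M_3$. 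That proof is shorter to read and explains \emph{why} Von Staudt's construction encodes arithmetic, but it tacitly assumes the triangles are nondegenerate (for instance, the base-times-height argument for multiplication is vacuous when $a=0$ or $b=0$, and one must appeal to continuity or a separate easy check there). Your homogeneous-coordinate computation is uniform in $a$ and $b$, covers these degenerate cases automatically, and only requires confirming at each step that the two spanning points (respectively the two lines being intersected) are distinct --- which, as you note, is immediate since one of each pair always lies on the line at infinity $z=0$ while the other does not, or the normal vectors are visibly non-proportional. The trade-off is purely one of geometric insight versus bookkeeping robustness; both proofs are complete and correct.
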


\begin{proof}
It is convenient to consider what the Von Staudt sequences look like in the euclidean plane 
if we embed it in the projective plane via the canonical embedding
$(x,y)^T\mapsto (x:y:1)$.
The point $P_a$ then corresponds to 
$(a,0)^T$ for all $a\in\eR$.
Also observe that the $x$-axis corresponds to $\ell(P_0,P_\infty)$, and
the $y$-axis to $\ell(P_0,Q)$.
Two lines are parallel in the euclidean plane precisely if in the projective plane 
they intersect in a point on the line at infinity $\ell(P_{\infty},Q)$.
Thus horizontal lines in the euclidean plane are precisely 
the lines which intersect $\ell(P_{\infty},Q)$ in the point $P_{\infty}$, and
vertical lines are precisely the lines that intersect $\ell(P_{\infty},Q)$ in $Q$.

\noindent
{\bf Proof of~\ref{itm:P1corr}:}
In euclidean terms $R$ is the point $(1,1)^T$ and $\ell(R,Q)$ is the vertical
line through $R$, and $\ell(P_0,P_{\infty})$ is the $x$-axis.
(See figure~\ref{fig:VonStaudt}, left.)
Hence $O_1$, the intersection point of $\ell(R,Q)$ and $\ell(P_0,P_{\infty})$,
must corresponds to $(1,0)^T$.

\noindent
{\bf Proof of~\ref{itm:addcorr}:}
The points $A_1,A_3$ lie on the horizontal line $\ell(R,P_{\infty})$,
the lines $\ell(P_a,A_1),\ell(A_4,A_3)$ are vertical 
and the lines $\ell(P_0,A_1)$ and $\ell(P_b,A_3)$ are parallel.
Hence the triangle $P_bA_3A_4$ is a translate of the triangle $P_0A_1P_a$.
(See figure~\ref{fig:VonStaudt}, middle.)
In particular the segments $[P_0,P_a]$ and $[P_b,A_4]$ have the same length, and
so we must indeed have that $A_4$ coincides with the point $(a+b,0)^T$. 

\noindent
{\bf Proof of~\ref{itm:multcorr}:}
The line $\ell(P_0,R)$ coincides with the line $y=x$ in the euclidean plane.
Since the line $\ell(M_1,P_b)$ is vertical, the point 
$M_1$ corresponds to $(b,b)^T$.
Since the lines $\ell(P_a,R)$ and $\ell(M_3,M_1)$ are parallel, 
the triangles $P_0RP_a$ and $P_0M_1M_3$ are similar.
The height of $P_0RP_a$ is 1, and its base is $a$.
Since the height of $P_0M_1M_3$ is $b$, its base must be $ab$.
Hence $M_3$ coincides with $(ab, 0)^T$ as required.
\end{proof}

\begin{lemma}\label{lem:Pxcross} 
Let $P_0,P_{\infty},Q,R \in \RP^2$ be as defined by~\eqref{eq:P0PinfQRdef}, and
let $P_1,P_x \in \RP^2$ be as defined by~\eqref{eq:Pxdef}.
Then $\cross(P_1,P_x,P_{\infty},P_0) = x$.
\end{lemma}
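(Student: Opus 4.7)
The plan is to apply the definition of the cross ratio in~\eqref{eq:crossdef} directly with the given homogeneous coordinates. Since the points $P_0,P_1,P_x,P_\infty$ are all collinear (they all satisfy $y=0$), I need to pick an auxiliary point $p \in \RP^2$ off the line $\ell(P_0,P_\infty)$; the most convenient choice is $p := Q = (0:1:0)$. Then I will evaluate the four $3\times 3$ determinants $[Q,P_1,P_\infty]$, $[Q,P_x,P_0]$, $[Q,P_1,P_0]$, $[Q,P_x,P_\infty]$ using the fixed representatives $(0,1,0)^T,(1,0,1)^T,(x,0,1)^T,(0,0,1)^T,(1,0,0)^T$.

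Expanding along the first row of each determinant (which has only one nonzero entry, namely the middle $1$) reduces each to a $2\times 2$ minor that is trivial to evaluate. A quick computation gives $[Q,P_1,P_\infty]=1$, $[Q,P_x,P_0]=-x$, $[Q,P_1,P_0]=-1$, and $[Q,P_x,P_\infty]=1$. Substituting into~\eqref{eq:crossdef} yields
\[
\cross(P_1,P_x,P_\infty,P_0) \;=\; \frac{[Q,P_1,P_\infty]\,[Q,P_x,P_0]}{[Q,P_1,P_0]\,[Q,P_x,P_\infty]} \;=\; \frac{1\cdot(-x)}{(-1)\cdot 1} \;=\; x,
\]
as required. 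There is no real obstacle here; the only thing to be careful about is the order of arguments of $\cross$ (it is easy to permute them and obtain $1/x$ or $1-x$ instead), so I would double-check the pairing $(a,c),(b,d)$ in the numerator against the stated claim before writing it down.
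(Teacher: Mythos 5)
Your proof is correct and follows essentially the same route as the paper: a direct evaluation of the four determinants in the definition~\eqref{eq:crossdef}. The only difference is that you take the auxiliary point to be $Q=(0:1:0)$ while the paper takes $R=(1:1:1)$ (after noting $R\notin\ell(P_0,P_\infty)$); since the paper has already verified that the cross ratio is independent of this choice, and your determinant values and the final ratio $\frac{1\cdot(-x)}{(-1)\cdot 1}=x$ all check out, the argument is complete.
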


\begin{proof}
Observe that for all $a,b\in\eR$:

\[
\det\left(\begin{array}{ccc}
1&1&1\\a&0&1\\b&0&1
\end{array}\right)
= b-a, \quad
\det\left(\begin{array}{ccc}
1&1&1\\a&0&1\\1&0&0
\end{array}\right)
= 1.
\]

\noindent
It is easily seen that $R$ does not lie on the line $\ell(P_0,P_{\infty})$.
Hence, by definition~\eqref{eq:crossdef} of the cross ratio:

\[
\cross(P_1,P_x,P_{\infty},P_0) = 
\frac{[R,P_1,P_{\infty}]\cdot[R,P_x,P_0]}{[R,P_1,P_0]\cdot[R,P_x,P_{\infty}]} 
= x, \]

\noindent
for all $x\in\eR$, as required.
\end{proof}

\noindent
We are now in a position to give a quick proof of 
the following lemma, which will play an important role in the next section.
The lemma is already proved implicitly in the beginning of the proof of 
Theorem 1 in the seminal work of Goodman, Pollack and Sturmfels~\cite{GPS90} 
(see also~\cite{GPS89conf}).
A similar construction was also invented independently by Kratochvil and 
Matou{\v{s}}ek in~\cite{KratochvilMatousekTechReport}, the technical report 
version of~\cite{KratochvilMatousek94}.

\begin{lemma}\label{lem:GPS}
For every $r \in \eN$, there exists a constructible euclidean point 
configuration $\Pcal = (p_1,\dots,p_n)$ on $n = 3r+6$ points
such that $p_1,p_2,p_5,p_n$ are collinear and  $\cross(p_5,p_n,p_2,p_1) = 2^{2^r}$.
\end{lemma}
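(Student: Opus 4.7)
The plan is to arithmetize the line $\ell(P_0,P_\infty)$ by Von Staudt's constructions, producing on it the point $P_{2^{2^r}}$ so that the required cross ratio follows from Lemma~\ref{lem:Pxcross}. I would take as initial quadruple $p_1 = P_0$, $p_2 = P_\infty$, $p_3 = Q$, $p_4 = R$; the homogeneous coordinates in~\eqref{eq:P0PinfQRdef} make it immediate that no three are collinear, so~\constr{1} holds. Next $p_5 := P_1$ is defined as the single intersection in~\eqref{eq:one}, and I apply the Von Staudt addition~\eqref{eq:addition} with $a=b=1$ to obtain $P_2 = P_1 + P_1$; the key shortcut is that $A_1 = \ell(R,P_\infty) \cap \ell(P_1,Q) = R = p_4$, so only three new points $p_6, p_7, p_8 = P_2$ appear.

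Then for $k = 1, 2, \ldots, r$ I iteratively apply the multiplication sequence~\eqref{eq:multiplication} with $a = b = 2^{2^{k-1}}$ to square $P_{2^{2^{k-1}}}$ into $P_{2^{2^k}}$; each such step is correct by Lemma~\ref{lem:VonStaudtCorrect}(iii) and contributes three new intersections. Setting $p_n := P_{2^{2^r}}$, the four points $p_1, p_2, p_5, p_n$ all lie on $\ell(P_0, P_\infty)$, and Lemma~\ref{lem:Pxcross} immediately gives $\cross(p_5, p_n, p_2, p_1) = \cross(P_1, P_{2^{2^r}}, P_\infty, P_0) = 2^{2^r}$.

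Two issues still require care. Several intermediate Von Staudt points (such as $A_2 = (1{:}1{:}0)$ in the doubling, or the $M_2$ of each squaring) lie on the line at infinity, whereas the lemma asks for a \emph{euclidean} configuration. I would resolve this by choosing a projective line $\ell^\star$ disjoint from the whole configuration and applying a projective transformation sending $\ell^\star$ to the line at infinity; by Lemmas~\ref{lem:constrproj} and~\ref{lem:crossproj} the transformed configuration is still constructible and has the same cross ratio, but now lies entirely in $\eR^2$. The main obstacle is getting the exact count: the bookkeeping above gives $4 + 1 + 3 + 3r = 3r + 8$ points, two more than the claimed $3r + 6$, so two further coincidences must be exploited -- for instance by choosing the initial quadruple differently via Lemma~\ref{lem:4ptsprojeq} so that the ``one'' construction and the first doubling partially merge with the first squaring, or by spotting a coincidence between an $M_i$ of a squaring and a previously constructed $A_j$. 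Verifying that each~\constr{2} intersection still refers only to strictly earlier indices is the main routine-but-careful step.
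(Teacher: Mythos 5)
Your proposal is essentially the paper's own proof: start from $(P_0,P_\infty,Q,R)$, set $p_5=P_1$ via~\eqref{eq:one}, double once with the Von Staudt addition, square $r$ times with the Von Staudt multiplication, and finally use Lemmas~\ref{lem:constrproj} and~\ref{lem:crossproj} to apply a projective transformation carrying the configuration (which necessarily has points such as $A_2$ and the $M_2$'s on the line at infinity) into $\eR^2$; the cross ratio then comes from Lemma~\ref{lem:Pxcross} exactly as you say.

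The one place you diverge is the vertex count, and there your bookkeeping is in fact \emph{more} careful than the paper's. The paper counts $4+1+4+3(r-1)=3r+6$, i.e.\ only $r-1$ multiplications, which would produce $P_{2^{2^{r-1}}}$ rather than $P_{2^{2^{r}}}$; a literal count of the construction it describes gives $4+1+4+3r=3r+9$, or $3r+8$ using your (correct) observation that $A_1=\ell(R,P_\infty)\cap\ell(P_1,Q)=R$. So the ``two further coincidences'' you were hunting for are not exploited by the paper either, and you should not feel obliged to find them: the constant in the lemma should be read as $3r+O(1)$. This is harmless downstream, since the proof of Theorem~\ref{thm:sizeLline} only needs $n=O(k)$ (the explicit constants $12k+37$ and $33k+103$ would shift by a bounded amount), and none of the main theorems is affected.
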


\begin{proof} 
For any finite set $S \subseteq \RP^2$ 
we can find a projective transformation such that
$T[S] \subseteq \eR^2$ (if $T$ is the action of a matrix with i.i.d.~standard normal entries 
then $T$ will do the trick with probability one, for instance), so by Lemmas~\ref{lem:constrproj} and~\ref{lem:crossproj}
it suffices to define a suitable constructible point configuration in the projective plane.
Our four initial points will be $(p_1,p_2,p_3,p_4) = (P_0,P_{\infty},Q,R)$, and
we set $p_5 = P_1$.
Then we append the Von Staudt sequence for $P_{1+1}$, followed by 
the Von Staudt sequences for $P_{2\cdot 2}, P_{4\cdot 4}$ and so on until
$P_{2^{2^r}}$.
This gives a constructible point configuration on $n = 4+1+4+3(r-1) = 3r+6$ points,
and by Lemma~\ref{lem:Pxcross} we have
$\cross(p_5,p_n,p_2,p_1) = \cross(P_1,P_{2^{2^r}},P_{\infty},P_0) = 2^{2^r}$, as required.
\end{proof}



\section{Oriented line arrangements\label{sec:OLA}}

In this section all the action takes place exclusively in the euclidean plane again.
A line $\ell$ divides $\eR^2 \setminus \ell$ into two pieces.
In an {\em orientation} of $\ell$ we distinguish between these
two pieces by (arbitrarily) calling one of them $\ell^-$ the ``negative side" and
the other $\ell^+$ the ``positive side".
An {\em oriented line arrangement} is a tuple $\Lcal := (\ell_1, \dots, \ell_n)$ of 
distinct lines in the plane, each with an orientation.

The {\em sign vector} of a point $p \in \eR^2$ wrt.~an oriented 
line arrangement $\Lcal = (\ell_1,\dots,\ell_n)$ is 
the vector $\sigma(p;\Lcal) \in \{-,0,+\}^n$ defined as follows:

\[ (\sigma(p;\Lcal))_i = \left\{ \begin{array}{cl}
- & \text{ if } p \in \ell_i^{-}, \\
0 & \text{ if } p \in \ell_i, \\
+ & \text{ if } p \in \ell_i^{+}.
\end{array} \right. \]

\noindent
If $\Pcal \subseteq \eR^2$ is a set of points then we write

\[ \sigma(\Pcal;\Lcal) := \{ \sigma(p;\Lcal) : p \in\Pcal \}. \]

\noindent
The {\em combinatorial description} $\Dcal(\Lcal)$ of $\Lcal$ is the set of all sign vectors
$\Dcal(\Lcal) := \sigma(\eR^2;\Lcal)$.
The combinatorial description $\Dcal(\Lcal)$ is almost the same thing as the 
covectors of an oriented matroid and in fact it determines the oriented matroid
associated with $\Lcal$ (see~\cite{BjornerEtalBoek} for more details).
It should be mentioned that various other notions of a
combinatorial description of a line arrangement are in use
such as a local sequences, allowable sequences and wiring diagrams
(see for instance~\cite{FelsnerBoek}).

Each connected component of $\eR^2 \setminus (\ell_1\cup\dots\cup\ell_n)$ is called 
a {\em cell} or a {\em region}.
All points in the same cell have the same sign vector, which does not
have 0 as a coordinate.
A sign vector with two or more zeroes corresponds to an intersection point 
of two or more lines, and a sign vector with exactly one zero corresponds to a
line segment. (See figure~\ref{fig:combidesc}.)

\begin{figure}[!ht]
\begin{center}
\input{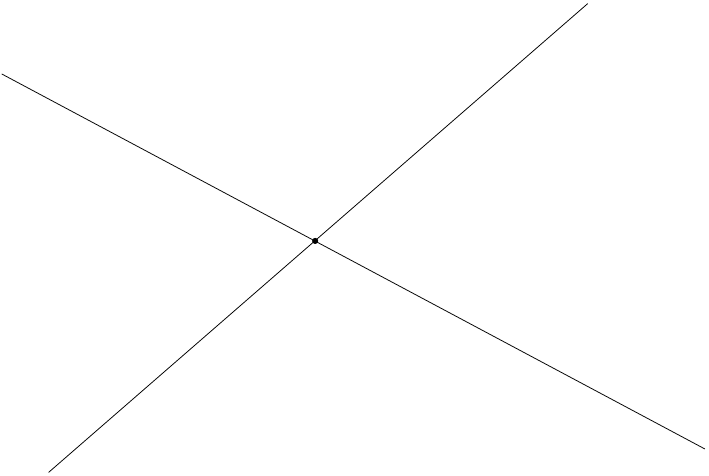tex_t}
\end{center}
\caption{An oriented line arrangement and its sign vectors.\label{fig:combidesc}}
\end{figure}

\noindent
Moreover, let us observe that from the set of sign vectors $\Dcal(\Lcal)$ alone
we can determine all relevant combinatorial/topological information, such as whether
a given cell is a $k$-gon, which cells/segments/points are incident with
a given cell/segment/point, etc.
If $\Dcal(\Lcal) = \Dcal(\Lcal')$ then we say that $\Lcal$ and $\Lcal'$ are
{\em isomorphic}.
Informally speaking, isomorphic oriented line arrangements have the same ``combinatorial structure".

If every two lines of $\Lcal$ intersect, and no point is on more than two lines then
we say that $\Lcal$ is {\em simple}.
It can be seen that a simple oriented line arrangement has
exactly ${n+1 \choose 2}+1$ cells (for a proof of a generalization see for instance
Proposition 6.1.1~of~\cite{Matousekboek}).
We will need the following standard elementary observation:

\begin{lemma}\label{lem:simpleiso}
If $\Lcal$ is simple and $\Lcal'$ has the same number of lines, then 
$\Dcal(\Lcal) = \Dcal(\Lcal')$ if and only if 
$\{-,+\}^n \cap \Dcal(\Lcal) = \{-,+\}^n \cap \Dcal(\Lcal')$. \noproof
\end{lemma}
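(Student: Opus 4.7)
The forward direction is immediate from the definition, so I focus on the converse. My plan has two stages.

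Stage one: I show that $\Lcal'$ must also be simple. Each 2-cell of an arrangement is the nonempty intersection of the open half-planes specified by its sign vector; this intersection is convex and hence connected, so it is precisely the set of points realising that sign vector. Consequently $|\{-,+\}^n \cap \Dcal(\Lcal)|$ equals the number of 2-cells of $\Lcal$, which is $\binom{n+1}{2}+1$ because $\Lcal$ is simple. The hypothesis then forces $\Lcal'$ to have $\binom{n+1}{2}+1$ two-cells as well, which is the maximum possible for any arrangement of $n$ lines. Hence $\Lcal'$ has no two parallel and no three concurrent lines, i.e., it is simple.

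Stage two: with both arrangements simple, every sign vector has at most two zeros. I will establish the following extension rule, applied uniformly to $\Lcal$ and $\Lcal'$: for $\sigma$ with zero set $Z$ of size at most two, $\sigma\in\Dcal$ if and only if all $2^{|Z|}$ vectors obtained by replacing the zeros with $\pm$ signs lie in $\Dcal\cap\{-,+\}^n$. The forward implication is a local argument around a witnessing point on the zero-set stratum. For the converse, consider the open convex region $S := \bigcap_{k\notin Z}\ell_k^{\sigma_k}$: it contains the $2^{|Z|}$ cells realising the extensions, one per open region cut out by the lines indexed by $Z$; by convexity $S$ must then contain the unique zero-set point (on $\ell_i$ if $|Z|=1$, or at $\ell_i\cap\ell_j$ if $|Z|=2$), whose sign vector is exactly $\sigma$. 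Applied to both arrangements and combined with the matching $\{-,+\}^n$-parts, this yields $\Dcal(\Lcal)=\Dcal(\Lcal')$.

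The main obstacle is the $|Z|=2$ case of the convexity step: I must show that a convex subset of $\eR^2$ meeting all four open quadrants cut out by two intersecting lines necessarily contains their intersection point $v$. I would reduce this to the claim that the convex hull of any four points, one in each such open quadrant, contains $v$; placing $v$ at the origin, this holds because the four direction-angles cannot all lie in a single open half-circle, as consecutive points in cyclic order belong to adjacent open quadrants and so each angular gap between them is strictly less than $\pi$.
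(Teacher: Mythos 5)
Your proof is correct and is a full elaboration of the argument the paper leaves as an unproved observation (``in a simple oriented line arrangement we can reconstruct all other sign vectors from the nonzero ones''); in particular your stage one, deducing from the cell count that $\Lcal'$ must also be simple, is a step the paper's remark glosses over but which is genuinely needed before the reconstruction rule can be applied to both arrangements. The only nit is the phrase ``the unique zero-set point on $\ell_i$'' in the case $|Z|=1$: the points of $\ell_i$ realising $\sigma$ form an open segment rather than a single point, but convexity of $S$ still yields a point of $S\cap\ell_i$, which is all you need.
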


\noindent
(This is just the observation that in a simple oriented line arrangement we can 
reconstruct all other sign vectors from the nonzero ones.)

For $\Lcal$ an oriented line arrangement, let $\Ical(\Lcal)$ denote the set of {\em intersection
points}, that is all points $p \in \eR^2$ that lie on more than one line.
The {\em span} of an oriented line arrangement can be defined as 

\[ 
\spanH(\Lcal) := \frac{
\displaystyle \max_{p,q\in\Ical(\Lcal)} \norm{p-q} 
}{ 
\displaystyle 
\min_{p,q\in\Ical(\Lcal),\atop p\neq q} \norm{p-q} 
}.
\]

\noindent
(Thus $\spanH(\Lcal)$ is the ratio of the furthest distance between 
two intersection points to the smallest distance between 
two intersection points.)

The main tool in the proofs of the lower bounds 
in Theorems~\ref{thm:DG},~\ref{thm:UDG} and~\ref{thm:SEG}
will be the following result.

\begin{theorem}\label{thm:sizeLline} 
For every $k \in \eN$, there a set $S \subseteq \{-,+\}^m$
with $m\leq 12k+37$ and  $|S| \leq 33k+103$ such that 
\begin{enumerate}
\item\label{itm:sizeL1} There exists a line arrangement $\Lcal$ with $S \subseteq \Dcal(\Lcal)$;
\item\label{itm:sizeL2} For every line arrangement with $S \subseteq \Dcal(\Lcal)$ we have
$\spanH(\Lcal) \geq 2^{2^k}$.
\end{enumerate}
\end{theorem}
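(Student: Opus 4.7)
The plan follows the sketch in Section~1.2: combine the constructible point configuration from Lemma~\ref{lem:GPS} with a Shor/Jaggi-style ``order-forcing" gadget construction. The rough idea is to exhibit a sign pattern $S \subseteq \{-,+\}^m$ that, whenever it is realised inside some line arrangement $\Lcaltil$, forces a projective copy of a prescribed constructible point configuration to appear as intersection points of $\Lcaltil$. Since cross ratios are preserved under projective transformations (Lemma~\ref{lem:crossproj}) and translate into ratios of Euclidean distances (Lemma~\ref{lem:crosseucl}), a constructible configuration carrying a doubly-exponential cross ratio will then force a doubly-exponentially large span.

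\emph{Step 1.} Apply Lemma~\ref{lem:GPS} with parameter $r = k+1$ to obtain a constructible euclidean point configuration $\Qcal = (q_1, \dots, q_n)$ on $n = 3k+9$ points in which $q_1, q_2, q_5, q_n$ are collinear with $\cross(q_5, q_n, q_2, q_1) = 2^{2^{k+1}}$. A convenient affine normalisation places $q_1,\dots,q_4$ in general position inside a bounded box and ensures no unintended collinearities among the intermediate construction points.

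\emph{Step 2.} Construct $\Lcal$ and $\Pcal$ in two phases. A constant-size ``initialisation'' (at most $13$ lines and $37$ witness points) installs $q_1,\dots,q_4$, locating a witness point in each of a small number of prescribed cells; by Lemma~\ref{lem:4ptsprojeq} this initial placement is free up to a global projective transformation. Then, for each $i=5,\dots,n$ we use the constructibility of $\Qcal$ to pick $j_1,j_2,j_3,j_4<i$ with $\{q_i\}=\ell(q_{j_1},q_{j_2})\cap\ell(q_{j_3},q_{j_4})$, and bolt on a Shor/Jaggi gadget consisting of exactly $4$ new lines and $11$ new points. Two of the new lines play the role of $\ell(q_{j_1},q_{j_2})$ and $\ell(q_{j_3},q_{j_4})$, forced to pass through the witness points planted for $q_{j_1},\dots,q_{j_4}$ by the combinatorial constraints; the remaining two auxiliary lines and the $11$ new witness points rigidly pin down the new intersection point $q_i$ and certify its location in a prescribed cell of the growing arrangement. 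Taking $S:=\sigma(\Pcal;\Lcal)\cap\{-,+\}^m$ and counting, $m\le 4n+13\le 12k+37$ and $|S|\le 11n+37\le 33k+103$, and (i) holds with $\Lcal$ itself.

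\emph{Step 3.} For (ii), let $\Lcaltil$ be any line arrangement with $S\subseteq\Dcal(\Lcaltil)$ and, for each $s\in S$, pick a point of $\eR^2$ realising $s$ in $\Lcaltil$, to obtain a set $\Pcaltil$. By design, the Step~2 gadgets allow one to read off inductively, from $\Pcaltil$, a sequence $\qtil_1,\dots,\qtil_n$ of intersection points of $\Lcaltil$ forming a point configuration $\Qcaltil$ that is projectively equivalent to $\Qcal$ (with $\qtil_i$ in the prescribed cell). By Lemmas~\ref{lem:crossproj} and~\ref{lem:crosseucl},
\[
\frac{\|\qtil_5-\qtil_2\|\cdot\|\qtil_n-\qtil_1\|}{\|\qtil_5-\qtil_1\|\cdot\|\qtil_n-\qtil_2\|} \;=\; 2^{2^{k+1}},
\]
so at least one of the two factor ratios is $\ge 2^{2^k}$, and since all four $\qtil_i$ lie in $\Ical(\Lcaltil)$ this yields $\spanH(\Lcaltil)\ge 2^{2^k}$. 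The delicate part of the argument is Step~2: one must design the gadget so that its combinatorial type (as recorded in the strict sign vectors $S$) rigidly enforces the incidence $q_i\in\ell(q_{j_1},q_{j_2})\cap\ell(q_{j_3},q_{j_4})$ up to one global projective transformation rather than merely up to some bounded perturbation, and one must fit this encoding into the budget of exactly $4$ extra lines and $11$ extra points per construction step while keeping the extracted $\qtil_i$ as honest intersection points of $\Lcaltil$.
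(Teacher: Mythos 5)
Your outline correctly identifies the overall strategy (it closely mirrors the sketch in Section~1.2), and Steps~1 and~3 are broadly in the right shape, but the proposal has a genuine gap: Step~2 is the entire technical content of the theorem, and you have described what the gadget must achieve rather than constructing it or proving that it works. Moreover, the mechanism you do describe is not viable as stated. You say two of the new lines are ``forced to pass through the witness points planted for $q_{j_1},\dots,q_{j_4}$ by the combinatorial constraints''; but $S\subseteq\{-,+\}^m$ consists of \emph{strict} sign vectors, so no witness point can be forced to lie \emph{on} any line -- incidences cannot be encoded directly. The actual construction works differently: each point $p_i$ gets four lines forming a tiny quadrilateral cell $E_i$ around it (two lines at a small angle to $\ell(p_{j_1},p_{j_2})$, two to $\ell(p_{j_3},p_{j_4})$), with eleven witness points, one per cell of this simple $4$-line arrangement, which by Lemma~\ref{lem:simpleiso} forces the corresponding subarrangement of $\Lcaltil$ to be isomorphic to it. The incidence is then enforced ``softly'': the witness points near $p_{j_1},\dots,p_{j_4}$ are trapped in four specific cells $A_i,B_i,C_i,D_i$ positioned so that for \emph{any} $a\in\Atil_i$, $b\in\Btil_i$, $c\in\Ctil_i$, $d\in\Dtil_i$ the lines $\ell(a,b)$ and $\ell(c,d)$ must meet inside $\Etil_i$; combined with a nesting property ($\clo(\Etil_j)\subseteq\conv(\Qcaltil_j^E)$, forced by extra half-plane constraints \Lcon{2}), this lets one define $\ptil_i$ inductively and land it in $\Etil_i$. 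Rigidity ``up to a global projective transformation'' is then not something the gadget enforces on $\Lcaltil$; it comes for free from constructibility and Lemma~\ref{lem:4ptsprojeq} once $\ptil_1,\dots,\ptil_4$ are chosen.

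A second, related gap is your final claim that ``all four $\qtil_i$ lie in $\Ical(\Lcaltil)$.'' In the construction the points $\ptil_i$ ($i\ge 5$) are intersections of lines \emph{spanned by previously constructed points}, not lines of $\Lcaltil$, so they are generally not intersection points of the arrangement, and nothing in your gadget budget provides for making them so. The paper circumvents this with one extra line $\ell_{4n+1}=\ell(p_1,p_2)$ and four extra witness points straddling it inside $E_1$ and $E_2$: this forces $\elltil_{4n+1}$ to cut segments $I_1,I_2,I_n$ out of $\Etil_1,\Etil_2,\Etil_n$, whose \emph{endpoints} are honest intersection points of $\Lcaltil$; the distance ratio between the $\ptil$'s is then transferred to these endpoints, using disjointness of the closures $\clo(\Etil_i)$ to keep the denominator positive. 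Without this (or an equivalent device) the span bound does not follow. Finally, a minor point: your stated bounds ``$m\le 4n+13\le 12k+37$'' and ``$|S|\le 11n+37\le 33k+103$'' are arithmetically false for $n=3k+9$ (they give $12k+49$ and $33k+136$); the counts only fit the budget if the per-point gadgets are charged for all $n$ points with $O(1)$ extra, as in $4n+1$ lines and $11n+4$ points.
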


\begin{proof}
Let $\Pcal = (p_1,\dots, p_n)$ be a constructible euclidean point configuration 
such that $\cross(p_1,p_n,p_2,p_5) = 2^{2^{k+1}}$ and $n \leq 3k+9$. 
Such a point configuration exists by Lemma~\ref{lem:GPS}.
We can assume without loss of generality that all the points of $\Pcal$ are distinct
(if a point occurs more than once then we can drop all but occurences after the first from the
sequence $(p_1,\dots, p_n)$ and we will still have a constructible point configuration).
We shall first construct an auxiliary oriented line arrangement $\Lcal$ on $4n+1$ lines, and 
an auxiliary point configuration $\Qcal$ of $11n+4$ points.

Our construction is inspired by the proof of Lemma 4 in~\cite{Shor91}. 
For each $i \geq 5$ let us fix a 4-tuple $f(i) = (j_1,j_2;j_3,j_4)$ 
such that $\{p_i\} = \ell(p_{j_1},p_{j_2})\cap\ell(p_{j_3},p_{j_4})$
and $i > j_1,j_2,j_3,j_4$.
(In principle there can be many different 4-tuples that define the same point 
$p_i$, but it is useful to fix a definite choice for the construction.)
We will also need $0 < \eps_1 < \eps_2 < \dots < \eps_n$, chosen sufficiently
small for the construction that we are about to follow to work.
(How small exactly depends on the point configuration $\Pcal$.)

For every point of $p_i \in \Pcal$ there will be four oriented lines 
$\ell_{4i-3},\ell_{4i-2},\ell_{4i-1},\ell_{4i}$ such that 
the oriented line arrangement $\Lcal_i := (\ell_{4i-3},\ell_{4i-2},\ell_{4i-1},\ell_{4i})$ 
is isomorphic to the one shown in figure~\ref{fig:qinit}, and

\begin{equation}\label{eq:Cidef}
p_i \in E_i := \ell_{4i-3}^-\cap\ell_{4i-2}^-\cap\ell_{4i-1}^-\cap\ell_{4i}^-. 
\end{equation}

\begin{figure}[!ht]
\begin{center}
\input{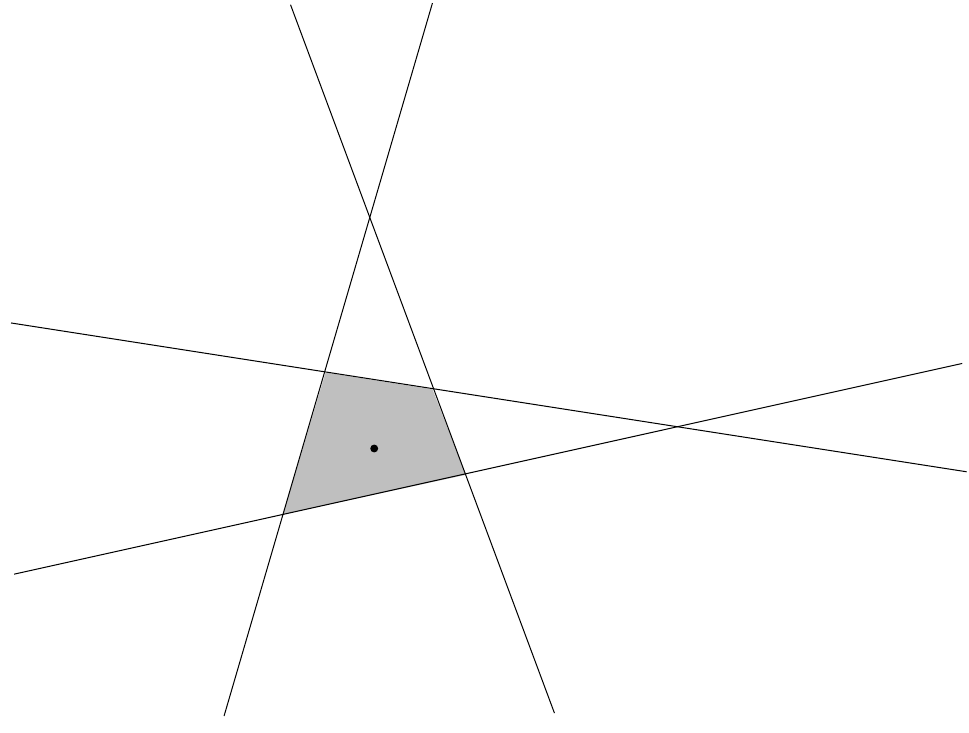_t}
\end{center}
\caption{The oriented line arrangement $\Lcal_i$.\label{fig:qinit}}
\end{figure}

\noindent
So in particular $E_i$ is a quadrilateral
with opposite sides on $\ell_{4i-3}$ and $\ell_{4i-2}$; and 
the other pair of opposite sides on $\ell_{4i-1}$ and $\ell_{4i}$.
There will also be eleven points $q_{11i-10},\dots, q_{11i}$ of $\Qcal$ associated with $p_i$, one
in each cell of $\Lcal_i$.
For notational convenience, let us write $Q_i := (q_{11i-10},\dots,q_{11i})$ and let 
$\Qcal_i^E$ denote the set of those points of $\Qcal_i$ denote that 
lie in cells of $\Lcal_i$ sharing at least one corner point with $E_i$ (so $\Qcal_i^E$ has nine
elements).
In the construction we shall make sure that the following
demands are met:  

\begin{itemize}
\item[\Lcon{1}] $\Qcal_i^E \subseteq B(p_i, \eps_i)$ for all $i=1,\dots,n$;
\item[\Lcon{2}] For each $1\leq i \neq j \leq n$ there is some $4i-3\leq k \leq 4i$
such that $B(p_j,\eps_j) \subseteq \ell_k^+$. 
\end{itemize}

\noindent
Let us now begin the construction of $\Lcal$ and $\Qcal$ in earnest.
To avoid treating definitions of $\Qcal_i, \Lcal_i$ with $i=1,\dots,4$ as special cases, it is 
convenient to define points $p_{0},p_{-1},\dots,p_{-15}$ (no three collinear)
such that $\{p_i\} = \ell(p_{-4i+1},p_{-4i+2}) \cap \ell(p_{-4i+3}, p_{-4i+4})$
for each $i=1,\dots,4$.
We then set $f(i) = (-4i+1,-4i+2;-4i+3,-4i+4)$ for $i=1,\dots,4$; and 
$\Qcal_{j} := (p_{j},\dots,p_{j})$ and $E_{j} = \{p_{j}\}$ for $j\leq 0$.

Suppose that, for some $i\geq 1$, we have already defined $\Qcal_j$
 and $\Lcal_j$ for all $j < i$ and that, thus far, the demands \Lcon{1}-\Lcon{2} are met.
Let $f(i)=(j_1,j_2;j_3,j_4)$. 
We shall place $\ell_{4i-3},\ell_{4i-2}$ 
both at a very small angle to $\ell(p_{j_1},p_{j_2})$ 
in such a way that:
\begin{itemize}
\item[a)] if $p_i$ lies on the segment $[p_{j_1}, p_{j_2}]$ then 
$\ell_{4i-3},\ell_{4i-2}$ intersect in a point $\in \ell(p_{j_1},p_{j_2}) \setminus
[p_{j_1}, p_{j_2}]$ and
$p_i, \Qcal_{j_1}^E,\Qcal_{j_2}^E$ will lie in the same cell of the
line arrangement $(\ell_{4i-3},\ell_{4i-2})$;
\item[b)] if $p_i$ lies outside the segment $[p_{j_1},p_{j_2}]$ then 
$\ell_{4i-3},\ell_{4i-2}$ intersect in a point $\in [p_{j_1}, p_{j_2}]$ and
$\Qcal_{j_1}^E,\Qcal_{j_2}^E$ will lie in opposite (i.e.~not sharing a segment) cells of the
line arrangement $(\ell_{4i-3},\ell_{4i-2})$ and $p_i$ will lie either in the same
cell as $\Qcal_{j_1}^E$ or in the same cell as $\Qcal_{j_2}^E$.
\end{itemize}

\noindent
(See figure~\ref{fig:qaddline}.)

\begin{figure}[!ht]
\begin{center}
\input{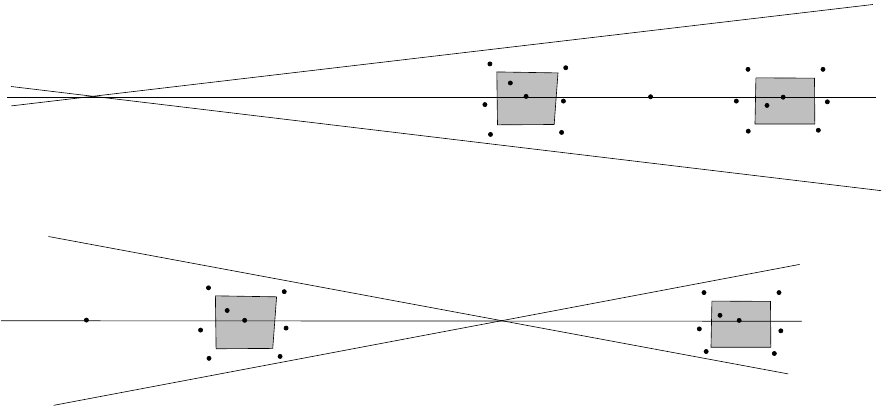tex_t}
\end{center}
\caption{Placing $\ell_{4i-3},\ell_{4i-2}$ depending on
whether $p_i$ lies on the segment $[p_{j_1},p_{j_2}]$ or not.
The grey quadrangles represent $E_{j_1}$ and $E_{j_2}$.\label{fig:qaddline}}
\end{figure}

\noindent
We place $\ell_{4i-1},\ell_{4i}$ in relation to $\Qcal_{j_3}^E,\Qcal_{j_4}^E$ 
analogously according to whether $p_i \in [p_{j_3},p_{j_4}]$ or not
(again both at a small angle to $\ell(p_{j_3},p_{j_4})$).

Observe that, provided $\eps_1,\dots,\eps_{i-1}$ were small enough, we can
place the lines $\ell_{4i-3},\dots,\ell_{4i}$ such that a) and b) above hold and 
in addition the angles between $\ell(p_{j_1},p_{j_2})$
and $\ell_{4i-3},\ell_{4i-2}$ and the angles between $\ell(p_{j_3},p_{j_4})$ and $\ell_{4i-1}, \ell_{4i}$ 
are small enough to make sure that:

\begin{enumerate}
\item[1)] $\Lcal_i = (\ell_{4i-3},\ell_{4i-2},\ell_{4i-1},\ell_{4i})$ is isomorphic to the 
oriented line arrangement in figure~\ref{fig:qinit};
\item[2)] $p_i$ lies in the quadrangular cell $E_i$ of $\Lcal_i$;
\item[3)] $\clo(E_i) \subseteq B(p_i,\eps_i)$.
\end{enumerate}

\noindent
(Here $\clo(.)$ denotes topological closure.) 
We can then orient the lines $\ell_{4i-3},\ell_{4i-2},\ell_{4i-1},\ell_{4i}$ 
in such a way that $E_i = \ell_{4i-3}^-\cap\ell_{4i-2}^-\cap\ell_{4i-1}^-\cap\ell_{4i}^-$.)
We now place $\Qcal_i$ in such a way that $\Qcal_i^E \subseteq B(p_i,\eps_i)$ 
(recall that $\Qcal$ has one point in each cell of $\Lcal_i$ and $\Qcal_i^E$ consists of those points of $\Qcal_i$ in cells
sharing at least one corner with $E_i$). Because of 3) this is possible.  
Thus, \Lcon{1} holds up to $i$.
To see that we can also satisfy \Lcon{2}, notice that for each $j\neq i$,
either $p_j \not\in \ell(p_{j_1},p_{j_2})$ or $p_j\not\in \ell(p_{j_3},p_{j_4})$, because
otherwise we would have $p_j=p_i$. 
Without loss of generality $p_j \not\in \ell(p_{j_1},p_{j_2})$.
We can then also assume that $\eps_j$ was chosen such that $B(p_j,\eps_j)$ misses $\ell(p_{j_1},p_{j_2})$, and 
hence if we place $\ell_{4i-3},\ell_{4i-2}$ close enough to $\ell(p_{j_1},p_{j_2})$ then
either $B(p_j,\eps_j) \subseteq \ell_{4i-3}^+$ or $B(p_j,\eps_j) \subseteq \ell_{4i-2}^+$.

Let $A_i$ denote the cell of $\Lcal_i$ that contains $\Qcal_{j_1}^E$;
let $B_i$ denote the cell of $\Lcal_i$ that contains $\Qcal_{j_2}^E$; let $C_i$ denote the cell of $\Lcal_i$ 
that contains $\Qcal_{j_3}^E$; and let $D_i$ denote the cell of $\Lcal_i$ that contains $\Qcal_{j_4}^E$.
Observe that the situation must be one of the situations 
as in figure~\ref{fig:ABCD}, up to swapping of the labels $A$ and $B$ and/or swapping of the labels 
$C$ and $D$.

\begin{figure}[!ht]
\begin{center}
\input{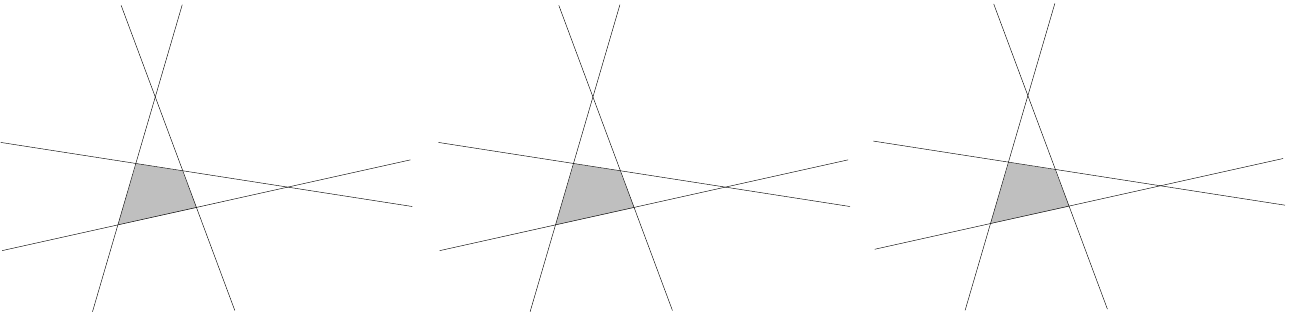tex_t}
\end{center}
\caption{The different positions of
$Q_{j_1}^E, \dots, Q_{j_4}^E$ in the cells of $\Lcal_i$.
The left figure corresponds to case a) twice, the middle figure to case a) once and case b) once, and
the right figure to case b) twice.\label{fig:ABCD}}
\end{figure}

\noindent
We now set $\ell_{4n+1} := \ell(p_1,p_2)$ (oriented in an arbitrary way) 
and we pick 

\begin{equation}\label{eq:11n+1def}
\begin{array}{l}
q_{11n+1} \in E_1 \cap \ell_{4n+1}^-, \quad q_{11n+2}\in E_1\cap \ell_{4n+1}^+, \\
q_{11n+3} \in E_2 \cap \ell_{4n+1}^-, \quad q_{11n+4}\in E_2\cap \ell_{4n+1}^+.
\end{array}
\end{equation}

\noindent
To finalize the construction, let us set:
\[ 
S := \{ \sigma(q_i;\Lcal) : 1\leq i \leq 11n+4 \}
\]

\noindent
(So trivially $S \subseteq \Dcal(\Lcal)$.)


Now let $\Lcaltil = (\elltil_1,\dots,\elltil_{4n+1})$ be
an oriented line arrangement with $S \subseteq \Dcal(\Lcaltil)$.
Let us fix points $\Qcaltil = (\qtil_1, \dots, \qtil_{11n+4})$ 
with $\sigma(\qtil_i;\Lcaltil) = \sigma(q_i;\Lcal)$ for all 
$i=1,\dots,11n+4$ and let
$\Qcaltil_i, \Qcaltil_i^E, \Lcaltil_i$ be defined in the obvious way. 
Observe that for each $i=1,\dots,n$

\[ 
\{ \sigma( q ; \Lcal_i ) : q \in \Qcal_i \}
=
\{ \sigma( q ; \Lcaltil_i ) : q \in \Qcaltil_i \}, 
\]

\noindent
so that, using Lemma~\ref{lem:simpleiso}, $\Lcal_i$ and $\Lcaltil_i$ are isomorphic.
In particular $\Lcaltil_i$ is again isomorphic to the oriented line arrangement
shown in figure~\ref{fig:qinit}.
Let us thus define $\Atil_i,\Btil_i,\Ctil_i,\Dtil_i,\Etil_i$ 
as the cells of $\Lcaltil_i$ corresponding to $A_i,B_i,C_i,D_i,E_i$.
Observe that (see figure~\ref{fig:ABCD}):

\begin{equation}\label{eq:abcd}
\begin{array}{l}
\text{For all } a\in\Atil_i,b\in\Btil_i,c\in\Ctil_i,d\in\Dtil_i
\text{ the lines } \ell(a,b), \ell(c,d) \\
\text{intersect in  a point } e\in\Etil_i.
\end{array}
\end{equation}

\noindent
This observation shall play a key role below.

It follows from \Lcon{1}-\Lcon{2} that for every $i\neq j$ there
is a $4i-3\leq k\leq 4i$ such that $\Qcal_j^E \subseteq \ell_k^+$.
We must then also have $\Qcaltil_j^E \subseteq \elltil_k^+$.
By convexity, this also gives $\conv(\Qcaltil_j^E)\subseteq \elltil_k^+$.
Because $\Qcaltil_j^E$ contains a point in each cell of $\Lcaltil_j$ sharing at least one
corner with $\Etil_j$, we have that 

\begin{equation}\label{eq:Etili}
\clo(\Etil_i) \subseteq \conv(\Qcaltil_i^E) \text{ for all } i=1,\dots,n, 
\end{equation}

\noindent
which implies

\begin{equation}\label{eq:Etildisj}
\clo(\Etil_i) \cap \clo(\Etil_j) = \emptyset \text{ for all } 1 \leq i\neq j \leq n.
\end{equation}

\noindent
Also observe that, from~\eqref{eq:11n+1def} it follows that 
$\elltil_{4n+1}$ intersects both $\Etil_1$ and $\Etil_2$.

We will now construct a point set
$\Pcaltil =(\ptil_1,\dots,\ptil_n)$ that will turn out to be projectively equivalent
to $\Pcal$.
We first pick $\ptil_1\in\elltil_{4n+1}\cap\Etil_1$ and $\ptil_2\in\elltil_{4n+1}\cap\Etil_2$, and then
we pick $\ptil_3\in\Etil_3,\ptil_4\in\Etil_4$ in such a way that $\ptil_1,\dots,\ptil_4$ are 
in general position (this can clearly be done because $\Etil_3,\Etil_4$ are nonempty and open). 
Once $\ptil_1,\dots,\ptil_{i-1}$ have been constructed for some $i \geq 5$, we
set

\[ 
\{\ptil_i\} := \ell(\ptil_{j_1},\ptil_{j_2})\cap\ell(\ptil_{j_3},\ptil_{j_4}), 
\]

\noindent
where $f(i) = (j_1,j_2;j_3,j_4)$.
Since $\Qcal_{j_1}^E\subseteq A_i, \Qcal_{j_2}^E\subseteq B_i, \Qcal_{j_3}^E\subseteq C_i, 
\Qcal_{j_4}^E\subseteq D_i$ it follows from~\eqref{eq:Etili} that
$\Etil_{j_1} \subseteq \Atil_i, \Etil_{j_2}\subset\Btil_i, \Etil_{j_3}\subseteq\Ctil_i,
\Etil_{j_4}\subseteq\Dtil_i$. 
Applying the observation~\eqref{eq:abcd} gives that $\ptil_i\in\Etil_i$.

By Lemma~\ref{lem:4ptsprojeq} there is a projective transformation $T$ that 
maps $p_i$ to $\ptil_i$ for $i=1,\dots,4$.
We now claim that in fact we must have $T(p_i) = \ptil_i$ for all $i=1,\dots,n$.
To see this suppose that, for some $i\geq 5$, we have $T(p_j)=\ptil_j$ for all $j<i$.
Let us again write $f(i) = (j_1,j_2;j_3,j_4)$. 
Since projective transformations map lines to lines, 
we have that $T[\ell(p_{j_1},p_{j_2})] = \ell(\ptil_{j_1},\ptil_{j_2})$
and $T[\ell(p_{j_3},p_{j_4})] = \ell(\ptil_{j_3},\ptil_{j_4})$.
This implies that indeed $T(p_i) = \ptil_i$.
The claim follows.

Using Lemma~\ref{lem:crossproj} we find

\[
\cross( \ptil_5,\ptil_n,\ptil_2,\ptil_1)
= \cross( p_5,p_n,p_2,p_1) = 2^{2^{k+1}}.
\]

\noindent
Thus, by Lemma~\ref{lem:crosseucl}, we have either
$\norm{\ptil_5-\ptil_2} / \norm{\ptil_5-\ptil_1} \geq \sqrt{2^{2^k+1}} = 2^{2^k}$ or
$\norm{\ptil_n-\ptil_2} / \norm{\ptil_n-\ptil_1} \geq 2^{2^k}$.
Without loss of generality $\norm{\ptil_n-\ptil_2} / \norm{\ptil_n-\ptil_1} \geq 2^{2^k}$.
Observe that $\ptil_1,\ptil_2, \ptil_n \in \elltil_{4n+1} = \ell(\ptil_1,\ptil_2)$ 
as $p_1,p_2,p_n \in \ell_{4n+1} = \ell(p_1,p_2)$.
Let the segments $I_1,I_2,I_n$ be defined by
$I_j := \elltil_{4n+1}\cap\Etil_j$ ($j=1,2,n$).
The distance $\norm{\ptil_2-\ptil_n}$ is at most 
the furthest distance between an endpoint of $I_2$ and an endpoint of $I_n$.
Similarly, $\norm{\ptil_1-\ptil_n}$ is at least the shortest distance between an endpoint 
of $I_1$ and an endpoint of $I_n$, and this distance is positive by~\eqref{eq:Etildisj}.
Since the endpoints of the $I_j$s are intersection points of $\elltil_{4n+1}$ with 
some other lines of $\Lcaltil$, we see that $\spanH(\Lcaltil) \geq 2^{2^k}$, as required.
\end{proof}

Another ingredient we need for the proofs of the lower bounds is the following 
lemma relating the span of oriented line arrangements to the numbers used
to express the oriented line arrangement as linear inequalities.

\begin{lemma}\label{lem:gridspan}
Let $\Lcal = (\ell_1,\dots,\ell_n)$ be an oriented line arrangement.
Suppose that, for some $k \in \eN$ there are nonzero $w_1,\dots,w_{n} \in \{-k,\dots,k\}^2$ 
and $c_1,\dots,c_{n} \in \{-k,\dots,k\}$ such that we can express the lines as:

\[ 
\ell_i = \{ z : w_i^Tz = c_i \}.
\]

\noindent
Then $\spanH(\Lcal) \leq 2^{9/2} \cdot k^6$.
\end{lemma}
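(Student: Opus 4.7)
The plan is to extract upper and lower bounds on the numerator and denominator of the coordinates of intersection points of $\Lcal$, via Cramer's rule, and then combine these to bound the numerator of $\spanH(\Lcal)$ from above and its denominator from below.

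First I would observe that any intersection point $p \in \ell_i \cap \ell_j$ is the unique solution of the $2\times 2$ linear system $w_i^T z = c_i, w_j^T z = c_j$. By Cramer's rule, this solution has the form $p = (a/d, b/d)$ where $d = (w_i)_1 (w_j)_2 - (w_i)_2 (w_j)_1$ and $a, b$ are similar $2\times 2$ determinants involving the $c$s and the entries of the $w$s. Since all entries of $w_i, w_j, c_i, c_j$ are integers in $\{-k,\dots,k\}$, we get $|a|, |b| \leq 2k^2$ and, since $p$ exists and is unique, $d$ is a nonzero integer with $|d| \leq 2k^2$. In particular every intersection point lies in the square $[-2k^2, 2k^2]^2$, so the numerator in the definition of $\spanH(\Lcal)$ satisfies
\[
\max_{p,q \in \Ical(\Lcal)} \norm{p-q} \leq \sqrt{2} \cdot 4k^2 = 4\sqrt{2}\, k^2.
\]

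For the denominator I would argue as follows. Suppose $p = (a_1/d_1, b_1/d_1)$ and $q = (a_2/d_2, b_2/d_2)$ are two distinct intersection points, with $a_i, b_i, d_i$ as above. Then
\[
p - q = \left( \frac{a_1 d_2 - a_2 d_1}{d_1 d_2}, \frac{b_1 d_2 - b_2 d_1}{d_1 d_2} \right),
\]
where the two numerators are integers, and at least one of them is nonzero because $p \neq q$. Thus at least one coordinate of $p - q$ has absolute value at least $1/|d_1 d_2| \geq 1/(4k^4)$, and hence $\norm{p - q} \geq 1/(4k^4)$.

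Combining the two bounds yields $\spanH(\Lcal) \leq 4\sqrt{2}\, k^2 \cdot 4 k^4 = 16\sqrt{2}\, k^6 = 2^{9/2} k^6$, which is the claimed inequality. There is no real obstacle here; the entire argument is a direct computation via Cramer's rule plus the elementary fact that nonzero integers have absolute value at least one. The only minor care needed is to verify that the product-of-denominators trick gives a clean bound without having to worry about the relative signs or about the two denominators possibly sharing factors, which is irrelevant because we only need a lower bound on the nonzero rational $1/|d_1 d_2|$.
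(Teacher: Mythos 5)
Your proof is correct and follows essentially the same route as the paper's: solve each $2\times 2$ system (Cramer's rule versus the explicit inverse formula, which amounts to the same computation), bound the coordinates by $2k^2$ to control the maximum distance, and use the fact that distinct rationals with denominators at most $2k^2$ differ by at least $1/(4k^4)$ to control the minimum. The constants match and no step is missing.
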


\begin{proof}
Any point $p\in\Ical(\Lcal)$ is the solution to a $2\times 2$ linear system
$Az=b$.
More precisely, if $\{p\} = \ell_{i} \cap \ell_{j}$ then 

\[
A = \left(\begin{array}{cc}
(w_i)_x & (w_i)_y \\
(w_j)_x & (w_j)_y
\end{array}\right), \quad
b = \left( \begin{array}{c} c_i \\ c_j \end{array} \right),
\]

\noindent
(Observe that, since $p$ must be the unique solution, $A$ is non-singular.)
From the familiar formula

\[ \left(\begin{array}{cc} 
a_{11} & a_{12} \\
a_{21} & a_{22} 
\end{array}
\right)^{-1} 
=
\left(\begin{array}{cc} 
\frac{a_{22}}{a_{11}a_{22}-a_{12}a_{21}} & \frac{-a_{12}}{a_{11}a_{22}-a_{12}a_{21}} \\
\frac{-a_{21}}{a_{11}a_{22}-a_{12}a_{21}} & \frac{a_{11}}{a_{11}a_{22}-a_{12}a_{21}} 
\end{array}
\right),
\]

\noindent
we see that $p = A^{-1}b$ has coordinates
$|p_x|,|p_y| \leq 2k^2$; and 
both coordinates are ratios $\frac{s}{t}$ of two integers with denominator 
$1 \leq t \leq 2k^2$. 
Hence we have 

\[
\max_{p,q\in\Ical(\Lcal)} \norm{p-q} \leq 4k^2\sqrt{2}.
\]

\noindent
Similarly, because if $p\neq q$ then either
$p_x\neq q_x$ or $p_y\neq q_y$, we have (also recall $\frac{s_1}{t_1}-\frac{s_2}{t_2} = \frac{s_1t_2-s_2t_1}{t_1t_2}$):

\[ 
\min_{p,q\in\Ical(\Lcal),\atop p\neq q} \norm{p-q} \geq \frac{1}{4k^4}.
\]

\noindent
The lemma follows.
\end{proof}

\section{The lower bound for unit disk graphs\label{sec:UDGlb}}

\noindent
For convenient reference later on, we have separated out the following observation as a lemma.

\begin{lemma}\label{lem:embeddisksR}
Let $\Lcal = (\ell_1,\dots,\ell_n)$ be an oriented line arrangement and 
let $\Pcal \subseteq \eR^2$ 
be a finite set of points.
Then there exist equal radius disks
$D_1^-,D_1^+,\dots,D_n^-,D_n^+$ such that 
$\Pcal \cap \ell_i^- \subseteq D_i^- \subseteq  \ell_i^-$ and
$\Pcal \cap \ell_i^+ \subseteq D_i^+ \subseteq \ell_i^+$ for
all $i=1,\dots,n$.
\end{lemma}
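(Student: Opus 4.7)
The plan is to exploit the fact that a very large disk behaves locally like a half-plane: by taking a common radius $r$ much larger than any relevant scale of $\Pcal$ and $\Lcal$, and placing each center $c_i^\pm$ deep into its half-plane $\ell_i^\pm$ along the normal to $\ell_i$, the disk $D_i^\pm = B(c_i^\pm,r)$ will just barely graze $\ell_i$ from the correct side, while the bounded finite set $\Pcal\cap\ell_i^\pm$ gets swallowed up inside it.

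To set this up, first I would fix $R > 0$ large enough that $\Pcal\subseteq B(0,R)$, and introduce the quantity
\[
\delta \ := \ \min\{\dist(p,\ell_i)\,:\,1\leq i\leq n,\ p\in\Pcal,\ p\notin\ell_i\},
\]
which is strictly positive, being a minimum over a finite set of positive numbers (if the set is empty the lemma is trivial). For each $i$, let $n_i^-,n_i^+$ be the unit normals to $\ell_i$ pointing into $\ell_i^-,\ell_i^+$, and let $q_i$ denote the foot of the perpendicular from the origin to $\ell_i$. I would then set
\[
c_i^{\pm} \ := \ q_i + M\cdot n_i^{\pm}, \qquad r \ := \ M,
\]
where $M$ is a single large parameter to be chosen at the end.

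The inclusion $D_i^{\pm}\subseteq\ell_i^{\pm}$ is then immediate from the choice of $c_i^{\pm}$: the center lies in the open half-plane $\ell_i^{\pm}$ at distance exactly $M=r$ from $\ell_i$, so the open disk of radius $r$ about it remains on the correct side of $\ell_i$. For the coverage $\Pcal\cap\ell_i^{-}\subseteq D_i^-$ (the $+$ case being identical), take $p\in\Pcal\cap\ell_i^-$ and decompose $p-q_i = a\,e_i + b\,n_i^-$, where $e_i$ is a unit vector along $\ell_i$ and $b = \dist(p,\ell_i)\geq\delta$. A direct computation gives
\[
\|p-c_i^-\|^2 \ = \ a^2+(M-b)^2 \ = \ M^2 \ -\ \bigl(2Mb-a^2-b^2\bigr),
\]
so that $p\in D_i^-$ precisely when $M > (a^2+b^2)/(2b)$. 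Since $a^2+b^2=\|p-q_i\|^2\leq (R+\|q_i\|)^2$ and $b\geq\delta$, choosing
\[
M \ > \ \frac{1}{2\delta}\,\max_{1\leq i\leq n}(R+\|q_i\|)^2
\]
is enough to make the inequality hold uniformly over all $i$ and all $p\in\Pcal\setminus\ell_i$.

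There is no real obstacle in this argument: everything is finite, and the single parameter $M$ absorbs all of the quantitative constraints at once. The only minor subtlety is being careful that each $p\in\Pcal\cap\ell_i^{\pm}$ has strictly positive distance to $\ell_i$, which is what lets us define $\delta>0$ and divide by $b$ in the final calculation.
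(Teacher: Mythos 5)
Your proof is correct and follows essentially the same route as the paper: place each center at distance $r$ from $\ell_i$ along the inward normal so the disk is tangent to the line from the correct side, and observe that the condition for a point at distance $b>0$ from the line to fall inside is $r>\|p-q_i\|^2/(2b)$, which a single large radius satisfies uniformly. The paper merely normalizes each half-plane to the upper half-plane by an isometry before doing the identical computation.
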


\begin{proof}
We first claim that for each $1\leq i \leq n$ and each $s \in \{-,+\}$ there is 
an $R_0(i,s)$ such that for all $R > R_0(i,s)$, there exists a
point $q_{i}^{s}(R)$ such that 
$\Pcal \cap \ell_i^{s} \subseteq B(q_i^{s}(R);R) \subseteq  \ell_i^{s}$.

To see this, fix an $1\leq i \leq n$ and an $s\in\{-,+\}$.
Let us write $\Pcal \cap \ell_i^{s} =: \{a_1,\dots, a_m\}$.
Observe that by applying a suitable isometry if needed we can assume without loss of 
generality that $\ell_i^{s} = \{ (x,y)^T : y > 0 \}$ and, thus $a_j = (x_j,y_j)^T$ with 
$y_j > 0$ for all $1 \leq j \leq m$.
Let us set $q(R) := (0,R)^T$. 
We have $\norm{q(R) - a_j}^2 = x_j^2 + (R - y_j)^2 = R^2 - 2 y_j R + (x_j^2+y_j^2)$, and therefore
$\norm{q(R)-a_j} < R$ for $R > \norm{a_j}^2/2y_j$. 
In other words, for $R > R_0 := \max_j \frac{\norm{a_j}^2}{2y_j}$ we have $a_j \in B(q(R);R) \subseteq \ell_+$
for all $j$, proving the claim.

If we pick $R > \max_{i,s} R_0(i,s)$ 
and we set $D_i^s = B( q_i^{s}(R); R)$ then the lemma follows.
\end{proof}

\noindent
The following proposition allows us to encode a combinatorial description of an oriented line arrangement into
a unit disk graph.

\begin{lemma}\label{lem:UDGembed}
Let $\Lcal = (\ell_1,\dots,\ell_n)$ be an oriented line arrangement and $\Scal \subseteq \Dcal(\Lcal)\cap\{-,+\}^n $.
There exists a unit disk graph $G$ on 
vertex set

\[ V(G) = \{v_i^-,v_i^+ : i=1,\dots,n\} \cup 
\{ u_j : j = 1,\dots,|\Scal| \}, \] 

\noindent
such that 
in any $\UDG$-realization $(B(\ptil(v),r) : v\in V(G) )$ of $G$, the 
oriented line arrangement $\Lcaltil = (\elltil_1,\dots, \elltil_n)$ defined by 

\[ 
\elltil_i^- := \{ z : \norm{ z - \ptil(v_i^-)} < \norm{z-\ptil(v_i^+)} \}, 
\]

\noindent
for $i=1,\dots,n$, has $\Scal \subseteq \Dcal(\Lcaltil)$. 
\end{lemma}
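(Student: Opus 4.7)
The whole argument rests on a simple observation: if $(B(\ptil(v),r):v\in V(G))$ is a UDG realization and $u$ is adjacent to $a$ but not to $b$ in $G$, then $\|\ptil(u)-\ptil(a)\|<2r\leq \|\ptil(u)-\ptil(b)\|$, so $\ptil(u)$ lies in the open half-plane closer to $\ptil(a)$. Applied to the pair $(a,b)=(v_i^-,v_i^+)$, this means that the edge pattern of $G$ alone forces $\ptil(u_\sigma)\in\elltil_i^{\sigma_i}$. It therefore suffices to exhibit a UDG $G$ on the prescribed vertex set in which the bipartite adjacencies between $u$'s and $v$'s are exactly $u_\sigma v_i^-\in E(G)\iff\sigma_i=-$ and $u_\sigma v_i^+\in E(G)\iff\sigma_i=+$; in any realization of such a $G$ we then have $\sigma(\ptil(u_\sigma);\Lcaltil)=\sigma$, which witnesses $\sigma\in\Dcal(\Lcaltil)$ and gives the required $\Scal\subseteq\Dcal(\Lcaltil)$.

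To construct such a $G$, I will give an explicit UDG realization. For each $\sigma\in\Scal$ I pick a representative point $p_\sigma\in\eR^2$ with $\sigma(p_\sigma;\Lcal)=\sigma$; because $\Scal\subseteq\{-,+\}^n$, no $p_\sigma$ lies on any $\ell_i$, so the finite quantities
\[
D:=\max_{\sigma,i}\|p_\sigma-o_i\|<\infty, \qquad d:=\min_{\sigma,i}\dist(p_\sigma,\ell_i)>0
\]
are well-defined, where $o_i$ is the foot of the perpendicular from the origin to $\ell_i$. For each $i$, let $n_i$ be the unit normal to $\ell_i$ with $\ell_i^+=\{z:(z-o_i)\cdot n_i>0\}$ and set
\[
\ptil(v_i^-):=o_i-2r\,n_i,\qquad \ptil(v_i^+):=o_i+2r\,n_i,\qquad \ptil(u_\sigma):=p_\sigma,
\]
for a common radius $r>D^2/(4d)$. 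I then let $G$ be the intersection graph of the resulting family of equal disks of radius $r$; by construction $G$ is a UDG.

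The verification reduces to the identity
\[
\|p-\ptil(v_i^-)\|^2-4r^2 = \|p-o_i\|^2 + 4r\,(p-o_i)\cdot n_i,
\]
which shows that $u_\sigma v_i^-\in E(G)$ iff $(p_\sigma-o_i)\cdot n_i < -\|p_\sigma-o_i\|^2/(4r)$. For $\sigma_i=+$ the left side is positive and the inequality fails; for $\sigma_i=-$ the left side is $\leq -d$ while the right side is $\geq -D^2/(4r)>-d$ by choice of $r$, so the inequality holds. A symmetric calculation handles $v_i^+$. The main subtlety is that the natural placement that Lemma~\ref{lem:embeddisksR} suggests---centers at perpendicular distance $r$ from $\ell_i$---does not give enough separation, since a point on the wrong side close to $\ell_i$ ends up just over distance $r$ from the wrong center, not over $2r$, producing spurious edges; pushing the centers to the full distance $2r$ resolves this, after which the edge structure is exactly the one required in the first paragraph and the lemma follows.
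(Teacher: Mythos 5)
Your proposal is correct and follows essentially the same route as the paper: build $G$ from an explicit equal-radius realization in which the bipartite adjacency between $u_\sigma$ and $v_i^{\pm}$ records the sign $\sigma_i$, then note that in any realization this adjacency pattern forces $\ptil(u_\sigma)$ onto the correct side of the perpendicular bisector of $\ptil(v_i^-)\ptil(v_i^+)$. The only difference is cosmetic: you place the centers at distance $2r$ from each $\ell_i$ and verify the edge condition by direct computation, whereas the paper invokes Lemma~\ref{lem:embeddisksR} to get disks $D_i^s$ of radius $R$ with $\Pcal\cap\ell_i^s\subseteq D_i^s\subseteq\ell_i^s$ and then uses disks of radius $R/2$ about their centers, which amounts to the same separation.
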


\begin{proof}
Let $\Pcal = \{ p_1,\dots,p_{|\Scal|}\}$ be a set of points
such that $\Scal = \{ \sigma(p) : p \in \Pcal \}$.

Let $D_1^-,D_1^+, \dots, D_n^-,D_n^+$ be as provided by Lemma~\ref{lem:embeddisksR}, let
$R$ denote their common radius and let $p(v_i^s)$ denote the center of $D_i^s$ for
$1\leq i \leq n$ and $s \in \{-,+\}$.
Define

\begin{equation}\label{eq:GUDdef}
\begin{array}{l}
D(v_i^s) := B( p(v_i^s); R/2 ) \quad \text{ for } s \in \{-,+\}, i=1,\dots,2n, \\
D(u_j) := B(p_j;R/2) \quad \text{ for } j=1,\dots,|\Scal|;
\end{array} 
\end{equation}

\noindent
and let $G$ be the corresponding intersection graph.
Observe that there is an edge between 
$v_i^s$ and $u_j$ if and only if $p_j \in B(p(v_i^s); R ) = D_i^s$; and this
happens if and only if $p_j \in \ell_i^s$ by choice of the $D_i^s$s.

Now let $(B(\ptil(v),r): v \in V(G))$ be an arbitrary realization of $G$
as the intersection graph of disks of equal radius $r$, and let $\Lcaltil$ be as in the statement of the 
lemma.
Pick a $1 \leq j \leq |\Scal|$ and a $1\leq i \leq n$, and suppose that $p_j \in \ell_i^-$.
Then, by definition~\eqref{eq:GUDdef} of $G$, we must have that $B(\ptil(v_i^-), r) \cap B(\ptil(u_j),r) \neq \emptyset$ and
$B(\ptil(v_i^+), r) \cap B(\ptil(u_j),r) = \emptyset$.
This gives $\norm{\ptil(u_j)-\ptil(v_i^-)} < \norm{\ptil(u_j)-\ptil(v_i^+)}$.
Or, in other words, $\ptil(u_j) \in \elltil_i^-$.
Similarly we have $\ptil(u_j) \in \elltil_i^+$ if $p_j \in \ell_i^+$.

Since this holds for all $1\leq i \leq n$, we see that

\[ 
\sigma( \ptil(u_j);\Lcaltil) = \sigma( p_j;\Lcal), 
\]

\noindent
for all $j=1,\dots,|\Scal|$. Hence $\Scal \subseteq \Dcal(\Lcaltil)$ as required.
\end{proof}

\noindent
We are now in a position to prove:

\begin{lemma}\label{lem:UDGlb} $\fUDG(n) = 2^{2^{\Omega(n)}}$.
\end{lemma}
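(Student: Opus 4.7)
The plan is to combine Theorem~\ref{thm:sizeLline}, Lemma~\ref{lem:UDGembed}, and Lemma~\ref{lem:gridspan}. First, apply Theorem~\ref{thm:sizeLline} with $k = \Omega(n)$ chosen as large as possible so that the set $S \subseteq \{-,+\}^m$ it returns (with $m \leq 12k+37$ and $|S| \leq 33k+103$) yields, via Lemma~\ref{lem:UDGembed}, a unit disk graph on at most $n$ vertices; this is possible since Lemma~\ref{lem:UDGembed} uses $2m + |S| \leq 57k + 177$ vertices. Pad with isolated vertices if necessary to obtain some $G \in \UDG(n)$; it then suffices to show that $\fUDG(G) \geq 2^{2^{\Omega(n)}}$.

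Next, consider an arbitrary integer $\UDG$-realization of $G$ by equal-radius disks with integer centers in $\{-K,\dots,K\}^2$, where $K := \fUDG(G)$. Define an oriented line arrangement $\Lcaltil = (\elltil_1,\dots,\elltil_m)$ by letting $\elltil_i$ be the perpendicular bisector of the two centers $\ptil(v_i^-)$ and $\ptil(v_i^+)$; these two centers are distinct because by construction of $G$ some vertex $u_j$ is adjacent in $G$ to exactly one of $v_i^-, v_i^+$, which forces the corresponding disks (of common radius) to be at different centers. By Lemma~\ref{lem:UDGembed} we have $S \subseteq \Dcal(\Lcaltil)$, and hence Theorem~\ref{thm:sizeLline}~(ii) gives
\[ \spanH(\Lcaltil) \geq 2^{2^k}. \]

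To obtain a matching upper bound in terms of $K$, I would observe that the perpendicular bisector of integer points $a, b \in \{-K,\dots,K\}^2$ has equation $2(b-a)^T z = \norm{b}^2 - \norm{a}^2$, with integer normal vector and right-hand side of absolute value $O(K^2)$. Applying Lemma~\ref{lem:gridspan} therefore yields $\spanH(\Lcaltil) = O(K^{12})$. Combining with the lower bound above gives $K^{12} = \Omega(2^{2^k})$, whence $K \geq 2^{2^{\Omega(k)}} = 2^{2^{\Omega(n)}}$, as required. The main technical point, and essentially the only step that requires care, is verifying that the coefficients of the perpendicular bisectors are polynomially bounded in $K$ so that Lemma~\ref{lem:gridspan} yields an $O(K^{O(1)})$ bound on the span; everything else amounts to routing the three main tools into place.
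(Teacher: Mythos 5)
Your proposal is correct and follows essentially the same route as the paper: take $\Lcal,S$ from Theorem~\ref{thm:sizeLline}, encode them into a unit disk graph via Lemma~\ref{lem:UDGembed}, read off the perpendicular bisectors of the pairs $\ptil(v_i^-),\ptil(v_i^+)$ as an integer line arrangement with coefficients $O(K^2)$, and play the lower bound $\spanH(\Lcaltil)\geq 2^{2^k}$ against the upper bound $O(K^{12})$ from Lemma~\ref{lem:gridspan}. The only additions beyond the paper's argument are minor bookkeeping (padding with isolated vertices, and the observation that the two centers are distinct), both of which are fine.
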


\begin{proof}
It suffices to show that for every $k\in\eN$ there exists a
unit disk graph $G$ on $O(k)$ vertices with $\fUDG(G) = 2^{2^{\Omega(k)}}$.
Let us thus pick an arbitrary $k \in \eN$,
let $\Lcal, \Scal$ be as provided by Theorem~\ref{thm:sizeLline} and let $G$ be as provided by 
Lemma~\ref{lem:UDGembed}.
Then $|V(G)| = 2|\Lcal|+|\Scal| = O(k)$. 
Let $(B(p(v);r) : v \in V(G))$ be an arbitrary $\UDG$-realization of $G$
with $p(v) \in \{-m,\dots,m\}^2$ for all $v$ and $r \in \{1,\dots,m\}$ for some $m\in\eN$.

Observe that the line $\elltil_i$ from Lemma~\ref{lem:UDGembed} satisfies

\[ \begin{array}{rcl}
\elltil_i 
& = & 
\{ z : \norm{ z - p(v_i^-)} = \norm{z-p(v_i^+)} \} \\
& = & 
\{ z : (p(v_i^+)-p(v_i^-))^T z = (p(v_i^+)-p(v_i^-))^T {\Big(}\frac{p(v_i^+)+p(v_i^-)}{2}{\Big)} \} \\
& = & 
\{ z : 2 (p(v_i^+)-p(v_i^-))^T z = (p(v_i^+)-p(v_i^-))^T (p(v_i^+)+p(v_i^-)) \} \\
& =: & 
\{ z : w_i z = c_i \}. 
\end{array} \]

\noindent
Observe that the $w_i$s have integer coordinates and the $c_i$s are integers, whose
absolute values are all upper bounded by $8 m^2$. 
We can thus apply Lemma~\ref{lem:gridspan} to get that 

\[ 
2^{9/2} (8m^2)^6
= 2^{45/2} m^{12} \geq \spanH(\Lcaltil) \geq 2^{2^k}.
\]

\noindent
Hence 

\[ 
m 
\geq 
\sqrt[12]{2^{2^k} / 2^{45/2}} 
= 
2^{2^k / 12 - 45/24 } 
= 
2^{2^{k - \log(12) - o(1)}}
=
2^{2^{\Omega(k)}}, 
\]

\noindent
which proves the lemma.
\end{proof}

\section{The lower bound for disk graphs\label{sec:DGlb}}

To prove the lower bound in Theorem~\ref{thm:DG} we develop a construction
for ``embedding" a line arrangement into a disk graph that is analogous to that 
for unit disk graphs in Lemma~\ref{lem:UDGembed}; and then 
we will again apply Lemma~\ref{lem:gridspan}.
In the proof of Lemma~\ref{lem:UDGembed} we used two vertices for every oriented line of $\Lcal$ and one vertex
for each sign vector of $\Scal$.
For disk graphs we can still use two vertices for each line, but rather than a single vertex 
we will need to place an induced copy of a special disk graph $H$ with a certain desirable property 
for each sign vector.
The construction of $H$ is pretty involved and takes up most of this section.
Rather than giving a list of vertices and edges, we will give a (geometric) procedure for constructing a 
realization of $H$ as a disk graph.
But before we can begin the construction of $H$, we will need to do some preliminary work.

By a result of Koebe~\cite{Koebe36} every planar graph is an 
intersection graph of touching disks (i.e.~closed disks with disjoint interiors).
This also gives that every planar graph is the intersection graph of open disks.

\begin{theorem}\label{thm:PlanarIsDG}
Every planar graph is a disk graph. \noproof
\end{theorem}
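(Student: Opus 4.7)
The plan is to invoke Koebe's circle packing theorem and then perturb the representation to replace closed touching disks by open intersecting disks while preserving the intersection graph.

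First, by Koebe's theorem, for any planar graph $G$ there is a family of closed disks $(\bar D_v : v \in V(G))$ with pairwise disjoint interiors such that $\bar D_u$ and $\bar D_v$ share a (single) boundary point \iFF $uv \in E(G)$. Write $c_v$ for the centre and $r_v$ for the radius of $\bar D_v$. Since there are only finitely many pairs, and non-adjacent disks are compact and disjoint, the quantity
\[
\delta := \min\bigl\{\, \norm{c_u-c_v} - (r_u+r_v) \,:\, uv \notin E(G),\ u \neq v \,\bigr\}
\]
is strictly positive.

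Next, fix any $0 < \eps < \delta/2$ and define open disks $D_v := B(c_v, r_v + \eps)$ for all $v \in V(G)$. I would then verify that $(D_v : v \in V(G))$ realizes $G$ by checking the two cases separately: if $uv \in E(G)$, then $\norm{c_u-c_v} = r_u + r_v < r_u + r_v + 2\eps$, so $D_u \cap D_v \neq \emptyset$; and if $uv \notin E(G)$ (with $u \neq v$), then $\norm{c_u-c_v} \geq r_u + r_v + \delta > r_u + r_v + 2\eps$, so $D_u$ and $D_v$ are disjoint. This shows that $G$ is the intersection graph of the open disks $D_v$, as required.

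There is no real obstacle here beyond invoking Koebe's theorem; the only point requiring a moment's thought is the existence of a positive gap $\delta$ between non-adjacent disks, which follows from finiteness and the fact that the closed non-adjacent disks are pairwise disjoint. Everything else is an $\eps$-perturbation argument with a uniform enlargement.
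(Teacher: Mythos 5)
Your proof is correct and follows exactly the route the paper takes: the paper simply cites Koebe's circle packing theorem and remarks that the touching-closed-disk representation ``also gives'' an open-disk representation, leaving the perturbation implicit. Your uniform $\eps$-enlargement with $\eps<\delta/2$ is precisely the standard way to fill in that omitted step, and the verification of both cases is sound.
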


\noindent
Recall that a planar {\em embedding} of a planar graph
assigns each vertex $v \in V(G)$ to a point $p(v)\in\eR^2$ in the plane,
and each edge $uv\in E(G)$ to a simple closed curve $\gamma(uv)$ with endpoints
$p(u),p(v)$ such that for any distinct $e,f\in E(G)$, the curves $\gamma(e), \gamma(f)$   
do not intersect except possibly in a common endpoint.

In a {\em F\'ary embedding} the curves $\gamma(e)$ are straight-line segments, i.e.~$\gamma(uv) = [p(u),p(v)]$.
A fundamental result that was proved at least three separate times by Wagner~\cite{Wagner36}, F\'ary~\cite{Fary48} and 
Stein~\cite{Stein51} states that every planar graph has a F\'ary embedding.
The following observation gives a partial converse to Theorem~\ref{thm:PlanarIsDG}.
It is is essentially the same as Theorem 3.4 in Breu's PhD 
thesis~\cite{BreuThesis} and Lemma 4.1 in Malesi\'nska's PhD thesis~\cite{MalesinskaThesis}. 
We give a proof for completeness.

\begin{lemma}\label{lem:trianglefreeplanar}
Let $G$ be a triangle-free disk graph of minimum degree at least two, and let 
$(B(p(v), r(v)) : v\in V(G) )$ be a realization of $G$ as a disk graph.
Then $G$ is planar and the points $p(v)$ define a F\'ary embedding of $G$. 
\end{lemma}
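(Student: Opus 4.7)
The plan is to show that placing each vertex $v$ at $p(v)$ and drawing $[p(u),p(v)]$ for every edge $uv\in E(G)$ produces a planar straight-line drawing of $G$, which gives both conclusions of the lemma at once.

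The geometric fact I would use throughout is that for every edge $uv\in E(G)$ the segment $[p(u),p(v)]$ is contained in $B(p(u),r(u))\cup B(p(v),r(v))$. This follows by parameterising the segment as $p(u)+t(p(v)-p(u))$ for $t\in[0,1]$ and observing that the two conditions $t\norm{p(u)-p(v)}<r(u)$ and $(1-t)\norm{p(u)-p(v)}<r(v)$ together cover $[0,1]$ precisely because $uv\in E(G)$ means $r(u)+r(v)>\norm{p(u)-p(v)}$.

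The first and main task is to rule out interior edge-edge crossings. Suppose edges $uv$ and $xy$ with $\{u,v\}\cap\{x,y\}=\emptyset$ cross at an interior point $q$. By the observation, $q$ lies in at least one of $B(p(u),r(u)),B(p(v),r(v))$ and in at least one of $B(p(x),r(x)),B(p(y),r(y))$. The favourable case is $q\in B(p(u),r(u))\cap B(p(v),r(v))$ (or, symmetrically, $q\in B(p(x),r(x))\cap B(p(y),r(y))$): combined with, say, $q\in B(p(x),r(x))$ this forces the edges $ux$ and $vx$, and together with $uv$ we get the triangle $uvx$, contradicting triangle-freeness. The harder ``lonely'' case is $q\in B(p(u),r(u))\cap B(p(x),r(x))$ with $q\notin B(p(v),r(v))\cup B(p(y),r(y))$, which on its own only yields the edge $ux$. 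This is the step I expect to be the main obstacle, and it is where the minimum-degree-two hypothesis is essential: using a second neighbour $v'$ of $v$ provided by the hypothesis, whose disk must meet $B(p(v),r(v))$, I would trace the cover of $[p(u),p(v)]$ past the transition region where it switches from being covered by $B(p(u),r(u))$ alone to being covered by $B(p(v),r(v))$ alone, and locate a further disk intersection that forces a triangle through $u$ or $v$.

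Next I would verify that no vertex $w\notin\{u,v\}$ lies on the interior of a drawn edge $[p(u),p(v)]$. Since $p(w)\in B(p(u),r(u))\cup B(p(v),r(v))$ and trivially $p(w)\in B(p(w),r(w))$, if $p(w)$ lies in both $B(p(u),r(u))$ and $B(p(v),r(v))$ then edges $uw$ and $vw$ both appear and form the triangle $uvw$ with $uv$; otherwise $p(w)$ lies in exactly one of them, say $B(p(u),r(u))$, and the analogous ``lonely'' argument, this time using a second neighbour of $w$ given by the degree hypothesis, again produces a triangle. Together these two facts imply that the straight-line drawing is a valid F\'ary embedding of $G$, and in particular $G$ is planar.
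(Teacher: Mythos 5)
Your overall strategy is the paper's: place $v$ at $p(v)$, draw straight edges, and rule out crossings by exhibiting triangles. The ``favourable'' cases you handle are fine. But the ``lonely'' case is a genuine gap, and it is exactly the crux of the lemma. Your sketched plan for it --- take a second neighbour $v'$ of $v$ and ``trace the cover of $[p(u),p(v)]$ past the transition region'' --- does not work as a mechanism: the disk $B(p(v'),r(v'))$ only has to meet $B(p(v),r(v))$ somewhere, and nothing places it near the segment $[p(u),p(v)]$ or the crossing point $q$, so it cannot be forced to create an intersection there. Moreover, for the disjoint-edge crossing the minimum-degree hypothesis is not what is needed at all. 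The paper resolves that case with a one-line triangle inequality: if $[p(u),p(v)]$ and $[p(x),p(y)]$ meet at $q$, then
$\norm{p(u)-p(x)}+\norm{p(v)-p(y)}\le \norm{p(u)-q}+\norm{q-p(x)}+\norm{p(v)-q}+\norm{q-p(y)}=\norm{p(u)-p(v)}+\norm{p(x)-p(y)}<r(u)+r(v)+r(x)+r(y)$,
so $ux\in E$ or $vy\in E$; by symmetry also $uy\in E$ or $vx\in E$; and each of the four combinations, together with $uv,xy\in E$, yields a triangle. In particular your ``lonely'' configuration ($q\in B(p(u),r(u))\cap B(p(x),r(x))$ only) is killed because the second alternative still supplies $uy$ or $vx$, giving the triangle $uxy$ or $uvx$. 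No covering lemma and no degree hypothesis enter here.

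Where the minimum-degree hypothesis \emph{is} needed is in the degenerate collinear configurations, and there the right tool is different from what you propose: one first shows that no disk contains another, i.e.\ $\norm{p(u)-p(w)}>r(u)-r(w)$ for distinct $u,w$ (if $B(p(w),r(w))\subseteq B(p(u),r(u))$, a second neighbour of $w$ completes a triangle with $u$ --- this is the only place degree two is used). With that in hand, your vertex-on-edge ``lonely'' sub-case closes: if $p(w)$ lies on the interior of $[p(u),p(v)]$ and $vw\notin E$, then $\norm{p(u)-p(w)}=\norm{p(u)-p(v)}-\norm{p(v)-p(w)}<(r(u)+r(v))-(r(v)+r(w))=r(u)-r(w)$, contradicting the no-nesting bound. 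So your instinct that degree two matters for the collinear case is right, but the argument goes through nested disks, not through tracing the cover of the segment. As written, both of your deferred steps are unproved, and the proposal does not yet constitute a proof.
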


\begin{proof}
Observe first that if $u,v$ are distinct vertices then
$\norm{p(u)-p(v)} > r(u)-r(v)$. For if not then $B(p(v),r(v))\subseteq B(p(u),r(u))$ and
then $G$ would have a triangle since $v$ has degree at least two.

Let $uv, uw \in E(G)$ be two distinct edges that share an endpoint.
We claim that 

\begin{equation}\label{eq:edgeembedcrap}
[p(u),p(v)] \cap [p(u),p(w)] = \{p(u)\}. 
\end{equation}

To see this, let us write $\alpha := \angle p(v)p(u)p(w)$.
If $\alpha \neq 0$ then~\eqref{eq:edgeembedcrap} is easily seen to hold.
Let us thus suppose that $\alpha=0$.
Then we have either $p(u) \in [p(v),p(w)]$ or $p(v) \in [p(u),p(w)]$ or $p(w)\in [p(u),p(v)]$.
If $p(u) \in [p(v),p(w)]$ then~\eqref{eq:edgeembedcrap} is again easily see to hold.
Let us thus suppose that $p(v) \in [p(u),p(w)]$. 
We have

\[ \begin{array}{rcl}
\norm{p(v)-p(w)} & = & \norm{p(u)-p(w)} - \norm{p(u)-p(v)} \\
& < & r(u)+r(w) - (r(u)-r(v)) \\
& = & r(w)+r(v). 
\end{array} \]

\noindent
But then we must have $vw\in E(G)$, contradicting that $G$ is triangle-free.
Similarly we cannot have $p(w)\in [p(u),p(v)]$.
Thus $\alpha \neq 0$ and hence~\eqref{eq:edgeembedcrap} holds by a previous argument, as claimed.

Now consider two edges $uv, st \in E(G)$ with $u,v,s,t$ distinct.
We claim that

\begin{equation}\label{eq:edgeembedcrap2}
 [p(u),p(v)]\cap[p(s),p(t)] = \emptyset.
\end{equation}

Aiming for a contradiction, let us suppose that the segments $[p(u),p(v)],[p(s),p(t)]$ intersect in some 
point $q$.
Observe that 

\[ \begin{array}{rcl}
\norm{p(u)-p(s)} + \norm{p(v)-p(t)}
& \leq & 
\norm{p(u)-q} + \norm{q-p(s)} \\
& & 
+ \norm{p(v)-q}+\norm{q-p(t)} \\
& = & 
\norm{p(u)-p(v)} + \norm{p(s)-p(t)} \\
& < & 
r(u)+r(v)+r(s)+r(t).
\end{array} \]
 
\noindent
Hence either $\norm{p(u)-p(s)} < r(u)+r(s)$ or
$\norm{p(v)-p(t)} < r(v)+r(t)$.
In other words, either $us \in E(G)$ or $vt \in E(G)$.
Similarly either $ut\in E(G)$ or $vs \in E(G)$.
It is easily checked that in each of the four cases there is
a triangle, contradicting that $G$ is triangle-free.

It follows that~\eqref{eq:edgeembedcrap2} holds, as claimed.
\end{proof}

\begin{lemma}\label{lem:K1k}
For every $\eps>0$ there is a $k=k(\eps)$ such that the following holds.
Let $G = K_{1,k}$ be the star on $k+1$ vertices and let $u \in V(G)$ denote the vertex of degree $k$.
For any $\DG$-realization $(B(p(v),r(v)):v\in V(G))$ of $G$, 
there is a $w\in V(G)$ such that $r(w) < \eps \cdot r(u)$.
\end{lemma}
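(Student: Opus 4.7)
\textbf{Plan for proving Lemma~\ref{lem:K1k}.} The plan is to give a straightforward area-packing argument. First I would rescale so that $r(u) = 1$ and $p(u) = 0$, which does not change the intersection graph. Suppose, for contradiction, that $r(v_i) \geq \eps$ for every leaf $v_i$ ($i=1,\dots,k$). Since the leaves of $K_{1,k}$ form an independent set of $G$, the leaf disks $D(v_i) := B(p(v_i),r(v_i))$ are pairwise disjoint, while each of them meets the central disk $D(u) = B(0,1)$.

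The heart of the argument is to inscribe inside every $D(v_i)$ a small disk $B'_i$ of radius $\eps/4$ whose center $c_i$ lies in $B(0, 1+\eps/4)$. To produce $c_i$, I would fix any point $p_i \in D(u)\cap D(v_i)$. If $B(p_i, \eps/4) \subseteq D(v_i)$, just set $c_i := p_i$; otherwise $\norm{p_i-p(v_i)} > r(v_i)-\eps/4 \geq 3\eps/4 > 0$, and I would slide
\[
c_i \;:=\; p_i + (\eps/4)\,\frac{p(v_i)-p_i}{\norm{p(v_i)-p_i}}
\]
a step of length $\eps/4$ from $p_i$ toward $p(v_i)$. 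A direct check gives $\norm{c_i-p(v_i)} = \norm{p_i-p(v_i)} - \eps/4 < r(v_i)-\eps/4$ and $\norm{c_i} < 1 + \eps/4$, so in either case $B'_i := B(c_i,\eps/4) \subseteq D(v_i) \cap B(0, 1+\eps/2)$.

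Since the $D(v_i)$ are pairwise disjoint, so are the $B'_i$, and comparing total areas inside $B(0, 1+\eps/2)$ yields
\[
k\cdot \pi(\eps/4)^2 \;\leq\; \pi(1+\eps/2)^2,
\]
whence $k \leq 16(1+\eps/2)^2/\eps^2$. Choosing $k(\eps)$ to be any integer strictly exceeding this bound then forces at least one leaf disk to have radius less than $\eps\cdot r(u)$, which is what is required. I do not foresee any serious obstacle here: the only point that needs care is ensuring that each inscribed $B'_i$ sits in a bounded ball about the origin, and this is exactly why the base point $p_i$ is chosen inside $D(u)$.
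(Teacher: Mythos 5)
Your proposal is correct and follows essentially the same route as the paper: the paper also normalizes $r(u)=1$, replaces each leaf disk by a shrunken disk of radius $\eps$ (you use $\eps/4$) still meeting $D(u)$, and derives a contradiction by comparing areas of the disjoint shrunken disks inside a fixed ball around $p(u)$. Your version just spells out the "sliding" construction of the inscribed disks more explicitly; the constants differ but both choices of $k(\eps)$ work.
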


\begin{proof}
Let $\eps > 0$ be arbitrary and let $k = \lceil (1+\eps)^2/\eps^2\rceil + 1$.
Let $(B(p(v),r(v)): v\in V(K_{1,k}) )$ be an arbitrary realization of 
$K_{1,k}$ as a disk graph, and suppose that 
$r(u)=1$ and $r(v) \geq \eps$ for all $v\neq u$.
We can assume that $r(v) = \eps$ for all $v\neq u$, by replacing
$D(v)$ with a smaller ball $D'(v) := B(p'(v),\eps) \subseteq D(v)$ that still
intersects $D(u)$.
Since $D(v)$ intersects $D(u)$ and has radius $\eps$, we have
$D(v) \subseteq B(p(u),1+\eps)$ for all $v \neq u$.
Since $D(v) \cap D(w) = \emptyset$ for all $v\neq w \in V(G)\setminus \{u\}$, by area considerations 
we must have

\[ 
k \leq \frac{\pi (1+\eps)^2 }{ \pi \eps^2} = (1+\eps)^2 / \eps^2 < k, 
\]

\noindent
a contradiction.
\end{proof}

For a $k \in \eN$ odd, let $O_k$ denote the graph obtained as follows.
We start with a path $u_0,\dots, u_{2k}$ of length $2k$.
Now we add vertices $a,b$ each joined to $u_j$ for all even $j$.
Let $c$ denote $u_k$, the middle vertex of the path, and let us also 
denote the endpoints of the path by $s=u_0, t=u_{2k}$.
See figure~\ref{fig:onion} for a depiction of $O_k$.

\begin{figure}[h!]
\begin{center}
\input{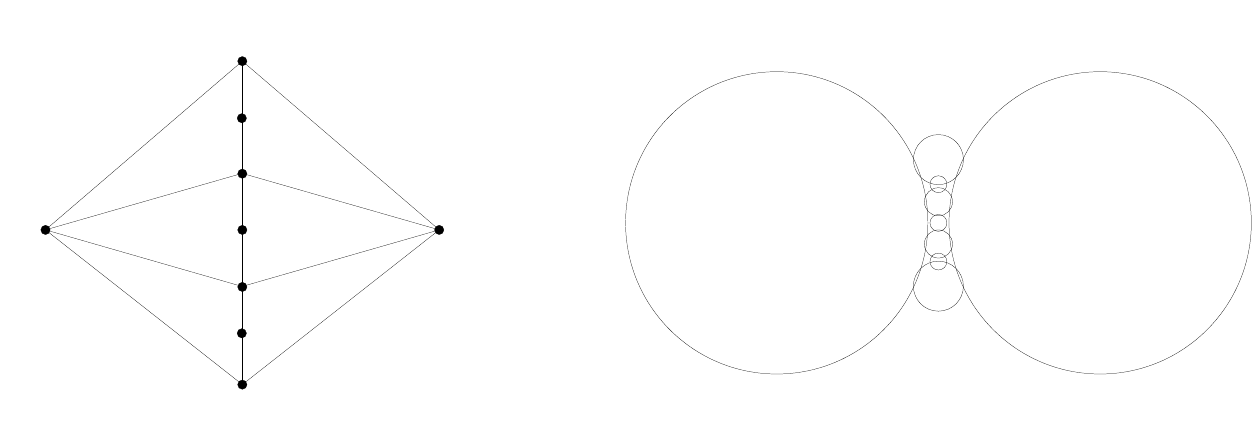tex_t}
\end{center}
\caption{The graph $O_3$ and a realization of it as a disk graph.\label{fig:onion}}
\end{figure}

\begin{lemma}\label{lem:radiuscsmall}
For every $\eps > 0$ there is a $k=k(\eps)$ such that
if $( B(p(v),r(v)) : v \in V(O_k) )$ is a $\DG$-realization 
of $O_k$, and $p(s), p(a), p(t), p(b)$ lie on the 
outer face of the corresponding F\'ary embedding, then 
$r(c) < \eps\cdot r(a)$.
\end{lemma}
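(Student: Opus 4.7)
The plan is to combine the Fáry embedding forced by Lemma~\ref{lem:trianglefreeplanar} with repeated applications of Lemma~\ref{lem:K1k} to the large stars at $a$ and $b$, and then extract a geometric bound on $r(c)$.

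First, one checks that $O_k$ is triangle-free (every potential triangle through $a$ would require two even-indexed path vertices to be adjacent; similarly for $b$; $a$ and $b$ are themselves non-adjacent; and the path has no triangles) and has minimum degree at least two (each interior odd vertex has degree $2$; each interior even vertex has degree $4$; $a$, $b$, $s$, $t$ have higher degree). Thus Lemma~\ref{lem:trianglefreeplanar} applies, and the $p(v)$'s form a Fáry embedding of $O_k$. An Euler-characteristic count gives $2k+1$ faces, and the hypothesis that $a,s,t,b$ lie on the outer face then forces the combinatorial embedding up to reflection: the outer face is the $4$-cycle $(a,s,b,t)$, while the $2k$ interior faces are the quadrilaterals $(a,u_{2j},u_{2j+1},u_{2j+2})$ and $(b,u_{2j},u_{2j+1},u_{2j+2})$ for $j=0,\dots,k-1$. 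In particular, the straight-line polyline $p(u_0),\dots,p(u_{2k})$ separates $p(a)$ from $p(b)$, and each $p(u_{2j+1})$ lies on the common boundary of the two faces $(a,u_{2j},u_{2j+1},u_{2j+2})$ and $(b,u_{2j},u_{2j+1},u_{2j+2})$.

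Second, I would apply Lemma~\ref{lem:K1k} to the star $K_{1,k+1}$ formed by $a$ together with its neighbors $u_0,u_2,\dots,u_{2k}$. For any chosen threshold $\eps' > 0$, at most $N=N(\eps')$ of these neighbors can have radius $\geq \eps' r(a)$, where $N$ is independent of $k$. Taking $k$ large enough (much larger than $N$), one finds even indices $2j_1 < k < 2j_2$ with $r(u_{2j_i}) < \eps' r(a)$, and by symmetry we may do the same for $b$. Each such ``small'' $u_{2j}$ bridges $D_a$ and $D_b$, so
\[
\dist(p(a),p(b)) \leq r(a)+r(b)+2r(u_{2j}) \leq r(a)+r(b)+2\eps' r(a),
\]
showing that $D_a$ and $D_b$ are nearly tangent.

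Third (the main obstacle), one must deduce that $r(c)<\eps\, r(a)$. The idea is that $p(c)$ lies on the straight-line polyline from $p(u_{2j_1})$ to $p(u_{2j_2})$, and the odd-indexed disks along this polyline are all disjoint from $D_a\cup D_b$, yet must interconnect small bridging disks near the thin lens $\overline{D_a}\cap\overline{D_b}$. One expects $p(c)$ to be forced into a small neighborhood of this lens, whence $\dist(p(c),p(a))\geq r(c)+r(a)$ and $\dist(p(c),p(b))\geq r(c)+r(b)$ together with the near-tangency of $D_a$, $D_b$ yield $r(c)<\eps r(a)$. The genuinely difficult point is that, in principle, the polyline can wiggle significantly between two widely separated bridges, so the constraint on $p(c)$ must be obtained by iterating Lemma~\ref{lem:K1k} at multiple scales to produce a dense set of small bridges flanking $c$ on both sides and both above and below, and then cascading the positional constraints through consecutive faces of the Fáry embedding. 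This cascading argument, using the non-intersection of each odd disk with $D_a$ and $D_b$, is what will ultimately pin down $p(c)$ and force $r(c)$ to be small.
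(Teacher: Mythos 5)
Your set-up matches the paper's: triangle-freeness and minimum degree two give the F\'ary embedding via Lemma~\ref{lem:trianglefreeplanar}, and applying Lemma~\ref{lem:K1k} to the independent set of even-indexed neighbours of $a$ produces even indices $i<k-1$ and $j>k+1$ with $r(u_i),r(u_j)<\eps\, r(a)$. But the proof then stalls exactly where you say it does, and the ``cascading, multi-scale'' argument you gesture at in the third step is not carried out --- nor is it needed. The missing idea is a single containment argument that avoids any control of how the polyline wiggles or where $p(c)$ sits along it. The four vertices $a,u_i,b,u_j$ induce a $4$-cycle ($u_i$ and $u_j$ are at distance at least $3$ on the path, and $ab\notin E$), and $c$ is adjacent to none of them; from the hypothesis that $p(s),p(a),p(t),p(b)$ bound the outer face one reads off that $p(c)$ lies inside the quadrilateral $p(u_i)p(a)p(u_j)p(b)$, hence $D(c)$ lies in the bounded component $R$ of $\eR^2\setminus\bigl(D(u_i)\cup D(a)\cup D(u_j)\cup D(b)\bigr)$. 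That region $R$ is in turn contained in the quadrilateral $Q'$ whose corners are intersection points $q_1\in\partial D(u_i)\cap\partial D(a)$, $q_2\in\partial D(a)\cap\partial D(u_j)$, $q_3\in\partial D(u_j)\cap\partial D(b)$, $q_4\in\partial D(b)\cap\partial D(u_i)$. The two opposite sides $[q_1,q_4]$ and $[q_2,q_3]$ are chords of the small disks $D(u_i)$ and $D(u_j)$, so each has length at most $2\eps\, r(a)$, and a disk contained in such a quadrilateral has radius less than $\eps\, r(a)$. No iteration of Lemma~\ref{lem:K1k} at multiple scales, and no near-tangency of $D(a)$ and $D(b)$, enters the argument.

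A secondary remark: your intermediate conclusion that $D(a)$ and $D(b)$ are nearly tangent is true but is a dead end --- as you yourself observe, $\dist(p(c),p(a))\geq r(c)+r(a)$ and $\dist(p(c),p(b))\geq r(c)+r(b)$ give nothing unless $p(c)$ is already known to be trapped near the lens, and establishing that trapping by chasing the polyline is precisely the hard route. The induced-$4$-cycle containment above is the short route, and it is what the paper does. As written, your proposal has a genuine gap at its central step.
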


\begin{proof}
Let $k$ be large, and let
 $( B(p(v),r(v)) : v \in V(O_k) )$ be an embedding of $O_k$ as
 a disk graph such that $p(s), p(a), p(t), p(b)$ lie on the 
outer face of the corresponding F\'ary embedding.

By Lemma~\ref{lem:K1k} above, if $k$ was chosen sufficiently large, there is
an even $0 \leq i < k-1$ such that $r(u_i) < \eps\cdot r(a)$ and an even
$k+1 < j \leq 2k$ such that $r(u_j) < \eps\cdot r(a)$.

Observe that, since the outer face of the F\'ary embedding is the quadrilateral 
with corners $p(s), p(a), p(t), p(b)$, we must have
that $p(c)$ lies inside the quadrilateral with vertices
$p(u_i),p(a), p(u_j), p(b)$.

Let us also observe that $\eR^2 \setminus (D(u_i)\cup D(a)\cup D(u_j)\cup D(b))$ consists of two 
connected regions, a bounded and an unbounded one. Let $R$ denote the bounded one, and let
$Q$ denote the (inside of) the quadrangle with corners $p(u_i),p(a), p(j), p(b)$.
Then $R$ is clearly contained $Q$.
By the previous we have $D(c) \subseteq R$
(as its center $p(c)$ lies in $Q$
and $D(c)$ is disjoint from $D(u_i)\cup D(a)\cup D(u_j)\cup D(b)$).

Observe that $R$ is also contained in the quadrilateral $Q'$
whose corner points are: an intersection point
$q_1$ of $\partial D(u_i)$ and $\partial D(a)$; an intersection point
$q_2$ of $\partial D(a)$ and $\partial D(u_j)$; an intersection point 
$q_3$ of $\partial D(u_j)$ and $\partial D(b)$; and an
intersection point $q_4$ of $\partial D(b)$ and $\partial D(u_i)$
(see figure~\ref{fig:quadrO}).

\begin{figure}[h!]
\begin{center}
\input{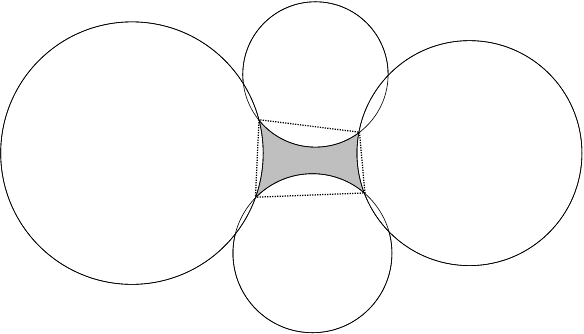tex_t}
\end{center}
\caption{$R$ is contained in the quadrilateral with 
corners $q_1,q_2,q_3,q_4$.\label{fig:quadrO}}
\end{figure}

\noindent
Clearly $\norm{q_3-q_2} \leq 2 r(u_j)$ and
$\norm{q_4-q_1} \leq 2 r(u_i)$.
Thus, two opposite sides of $Q'$
have length $< 2 \eps \cdot r(a)$.
Since also $D(c) \subseteq Q'$ we then must have $r(c) < \eps \cdot r(a)$, as required. 
\end{proof}


Consider a realization of $O_k$ with $k=k(1/1000)$ as in Lemma~\ref{lem:radiuscsmall} with 
$p(a), p(s), p(t), p(b)$ on the outer face of the corresponding F\'ary embedding
(such a realization is depicted in figure~\ref{fig:onion} for $k=3$).
For notational convenience let us denote $X := O_k$.

We now define a disk graph $Y$ on vertex set%
\[ V(Y) = \{a\} \cup
\{ v_i : v \in V(X)\setminus\{a\}, i=0,\dots,N \}, \] 

\noindent
as follows.
We let $D(a)$ be as in the chosen realization of $X$.
For each $v \in V(X)\setminus\{a\}$ and $i=0,\dots,N$ 
we place a disk $D(v_i) = B(p(v_i),r(v_i))$ where $r(v_{i}) := r(v)$ and
$p(v_{i})$ is obtained by rotating $p(v)$ counterclockwise about $p(a)$ by an angle of $i\cdot\alpha$.
Here $\alpha, N$ are chosen so that $C = c_0 \dots c_N$ will constitute an induced
cycle in the resulting intersection graph of disks $Y$. (See figure~\ref{fig:OnionAnim}.)
For notational convenience, let us also write $a_i := a$, and let 
$X_i$ denote the $i$-th rotated copy of $X$
(i.e.~it has vertex set $V(X_i) = \{ v_i : v \in V(X) \}$).
Observe that $D(a)$ does not intersect any $D(c_i)$ and it is
contained in the bounded region of $\eR^2 \setminus \bigcup_i D(c_i)$.

\begin{figure}[h!]
\begin{center}
\input{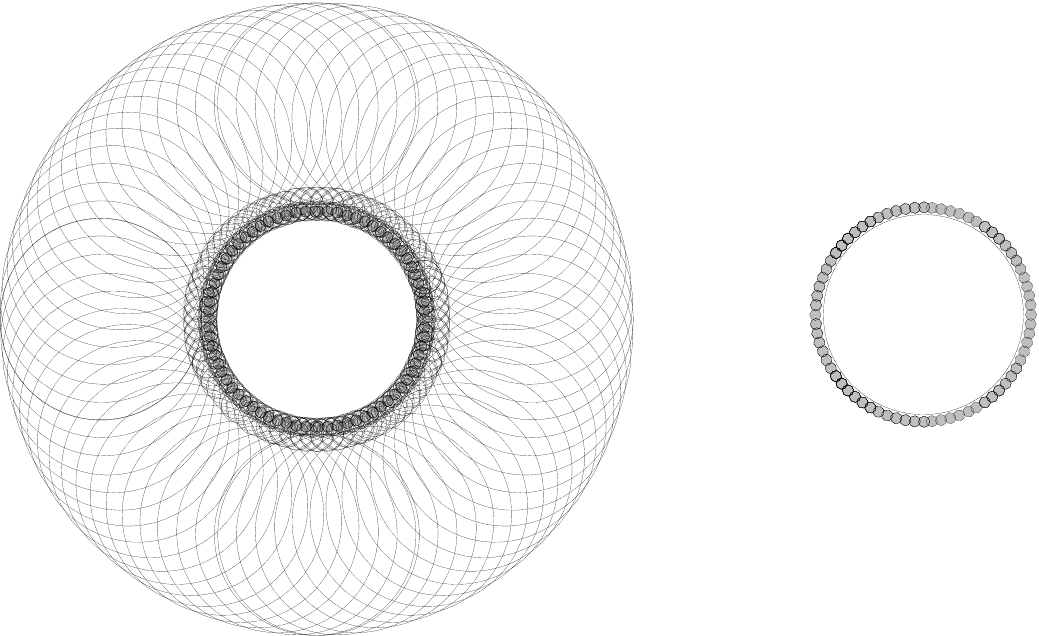tex_t}
\end{center}
\caption{The graph $Y$ is obtained by rotating copies of a realization of $X$ about $p(a)$ 
in such a way that the $c_i$s form an induced cycle.
On the right only $D(a)$ and the $D(c_i)$s are shown.\label{fig:OnionAnim}}
\end{figure}

We now construct the disk graph $H$ as follows.
We start with a four cycle $F$ 
and consider a realization of it by equal size disks. 
Inside each of the two regions of 
$\eR^2 \setminus \bigcup_{v\in F} D(v)$ we place a suitably shrunken
copy of the realization of $Y$ we have just constructed.
(See figure~\ref{fig:grid}.)

\begin{figure}[h!]
\begin{center}
\input{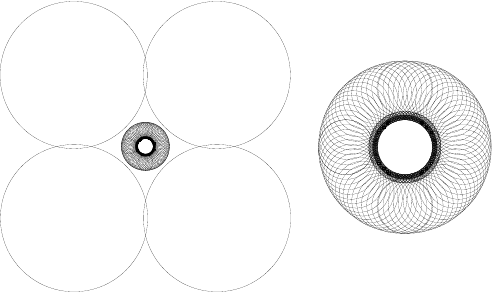tex_t}
\end{center}
\caption{Placing copies of $Y$ inside each region of
$\eR^2 \setminus \bigcup_{v\in F} D(v)$.\label{fig:grid}}
\end{figure}

Let $Y^{(1)}, Y^{(2)}$ denote these copies of $Y$; for each vertex $v \in V(Y)$ let 
$v^{(j)}$ denote the corresponding vertex in $Y^{(j)}$; and let $C^{(j)}$ and $X_i^{(j)}$
be defined in the obvious way.

We now add four internally vertex disjoint (meaning they do not share vertices other than 
their endpoints) paths $P_1, \dots, P_4$ to our construction 
such that each of them joins $a^{(1)}$ to $a^{(2)}$ and passes
through a vertex of $C^{(1)}$, a vertex of $F$, and a vertex of $C^{(2)}$; 
and the subgraph $Z$ of $H$ induced by the vertices

\[ 
V(Z) := 
\{a^{(1)},a^{(2)}\} \cup V(F) \cup V(C^{(1)}) \cup V(C^{(2)}) 
\cup \bigcup_{m=1}^4 V(P_m),
\]

\noindent 
is triangle free.
We can do this if we represent the vertices we are adding by small enough disks -- see figure~\ref{fig:addingpathsP}.
Only the stated properties of the paths $P_1,\dots,P_4$ will play a role in the
proof of Lemma~\ref{lem:Hlemma} below; the lengths and any additional edges 
that may have been created inadvertently are irrelevant as long as the induced subgraph 
$Z$ is triangle free and the paths $P_1,\dots,P_4$ have the properties stated.


\begin{figure}[h!]
\begin{center}
\input{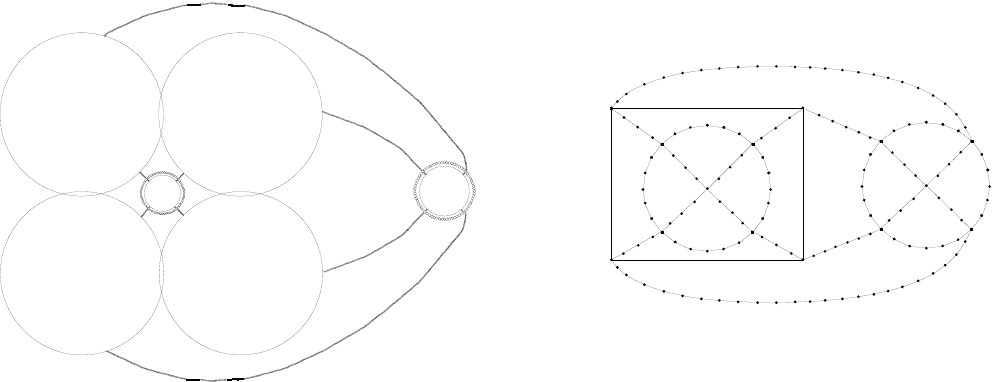tex_t}
\end{center}
\caption{Adding paths $P_1, \dots, P_4$ between $a^{(1)}$ and $a^{(2)}$.\label{fig:addingpathsP}}
\end{figure}

For $0 \leq i \leq N$ and $j=1,2$, let 
us add vertex disjoint paths 
$Q_1^{(i,j)}, \dots, Q_4^{(i,j)}$ to our construction, each 
running from one of the vertices $a^{(j)}, b_i^{(j)}, s_i^{(j)}, t_i^{(j)}$
to a vertex on $F$, in such a way that 
the subgraph $H^{(i,j)}$ of $H$ induced by the vertices 

\[ 
V(H^{(i,j)}) := 
V(F) \cup V(X_i^{(j)}) \cup \bigcup_{m=1}^4 V(Q_m^{(i,j)}),
\]

\noindent 
is triangle free.
Again this is possible if we choose the radii of the disks making 
up the internal vertices of the paths small enough -- see figure~\ref{fig:addingpathsQ}.

\begin{figure}[h!]
\begin{center}
\input{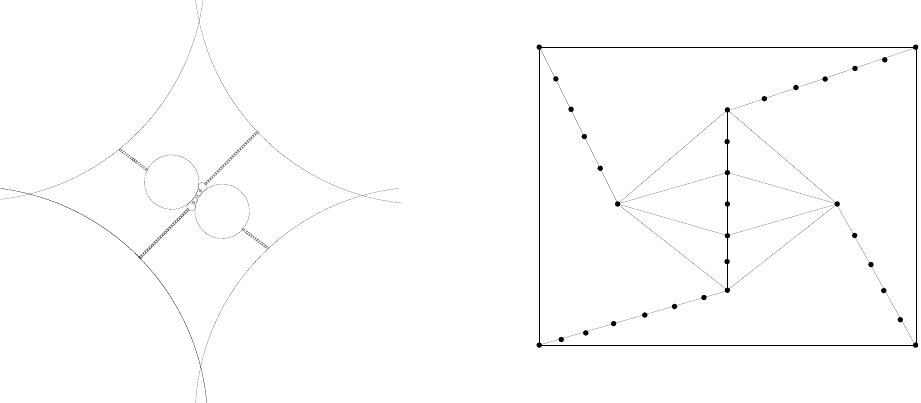tex_t}
\end{center}
\caption{Adding paths $Q_1^{(i,j)},\dots,Q_4^{(i,j)}$ between $F$ and 
$a^{(j)}, b_i^{(j)}, s_i^{(j)}, t_i^{(j)}$.\label{fig:addingpathsQ}}
\end{figure}

This concludes the construction of $H$.
The following lemma gives the key property of $H$ that will be crucial 
in the proof of the lower bound in Theorem~\ref{thm:DG}.

\begin{lemma}\label{lem:Hlemma}
Let $\Dcal = (D(v):v\in V(H))$ be an arbitrary realization of $H$ as
a disk graph.
Then there exists a point $p = p(\Dcal)$ such that
the following holds for all convex $W \subseteq \eR^2$:
\begin{enumerate}
\item\label{itm:Hlemma.i} 
If $W \cap D(v) \neq \emptyset$ for all $v\in V(H)$ then $p \in W$;
\item\label{itm:Hlemma.ii} 
If $W \cap D(v) = \emptyset$ for all $v \in V(H)$ then $p\not\in W$.
\end{enumerate}
\end{lemma}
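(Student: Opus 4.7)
The plan is to define $p(\Dcal) := p(a^{(1)})$, the centre of the disk representing $a^{(1)}$, and verify both parts. Part~(ii) is immediate: since $p(a^{(1)})\in D(a^{(1)})$, any convex $W$ containing $p$ automatically intersects $D(a^{(1)})$, so if $W$ meets none of the disks of $\Dcal$ then $p\notin W$. The real work lies in~(i): every convex $W$ meeting every disk of $\Dcal$ must in fact contain the specific point $p(a^{(1)})$.

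First I would apply Lemma~\ref{lem:trianglefreeplanar} to every induced subgraph $H^{(i,j)}$ (each is triangle-free and of minimum degree at least two by construction). The disk centres therefore give a F\'ary embedding of each such $H^{(i,j)}$. Inside this embedding, the four internally vertex-disjoint paths $Q_1^{(i,j)},\dots,Q_4^{(i,j)}$, which leave $X_i^{(j)}$ at exactly the vertices $a^{(j)},b_i^{(j)},s_i^{(j)},t_i^{(j)}$ and all land on $F$, force these four vertices to lie on the boundary of a single face of the induced drawing of $X_i^{(j)}$; the planar structure of $O_k$ pins this face down to be the quadrilateral bounded by $s,a,t,b$. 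Applying Lemma~\ref{lem:radiuscsmall}, with $k=k(\eps)$ chosen at the construction stage with $\eps$ as small as we please, then yields $r(c_i^{(j)}) < \eps\cdot r(a^{(j)})$ for every $i$ and $j$.

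Applying Lemma~\ref{lem:trianglefreeplanar} next to the triangle-free subgraph $Z$ produces a F\'ary embedding in which the induced cycle $C^{(1)}=c_0^{(1)}c_1^{(1)}\cdots c_N^{(1)}$ becomes a closed, non-self-crossing polygon $\Pi$ with vertices $p(c_0^{(1)}),\dots,p(c_N^{(1)})$. Since $a^{(1)}$ is non-adjacent to any $c_i^{(1)}$, the disk $D(a^{(1)})$ is disjoint from each $D(c_i^{(1)})$; combined with $r(c_i^{(1)}) < \eps\, r(a^{(1)})$ this forces every $p(c_i^{(1)})$ to sit in a thin annulus around $p(a^{(1)})$ of inner radius approximately $r(a^{(1)})$. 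The critical topological claim is that $\Pi$ winds once around $p(a^{(1)})$. I would derive this from the fact that $a^{(1)}$ is adjacent in $H$ to every evenly indexed vertex $u_{2l,i}^{(1)}$ of every onion $X_i^{(1)}$: in the F\'ary drawing of a suitable triangle-free subgraph, the straight edges from $p(a^{(1)})$ to the $p(u_{2l,i}^{(1)})$ leave $p(a^{(1)})$ in a cyclic order matching the rotational order $i=0,\dots,N$ of the onions (via the edges $c_i^{(1)}c_{i+1}^{(1)}$ that link consecutive onions), and $c_i^{(1)}$ lies on the far side of its onion's middle path from $a^{(1)}$; together these force $\Pi$ to make exactly one full turn around $p(a^{(1)})$.

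Part~(i) then follows quickly: given a convex $W$ meeting every $D(v)$, pick $q_i \in W\cap D(c_i^{(1)})$ for each $i$. Because $\norm{q_i-p(c_i^{(1)})} < \eps\,r(a^{(1)})$, for $\eps$ small enough in the construction the perturbed polygon $q_0q_1\cdots q_N$ has the same winding number as $\Pi$ about $p(a^{(1)})$, namely one, so by convexity $p(a^{(1)})\in\conv\{q_0,\dots,q_N\}\subseteq W$. The main obstacle I anticipate is the winding-number step: turning the purely combinatorial planar-embedding constraints coming from $H$ into the honest global topological statement that $\Pi$ actually encircles $p(a^{(1)})$, while handling the technical point that the face of $X_i^{(j)}$'s induced drawing containing $F$ need not literally be the unbounded one---requiring a mild reflection/stereographic adaptation of Lemma~\ref{lem:radiuscsmall}.
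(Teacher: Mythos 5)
Your skeleton is right (take $p$ to be the centre of an $a$-disk, shrink the $c$-disks via Lemma~\ref{lem:radiuscsmall}, and finish by convexity), and your winding-number finish is a workable substitute for the paper's argument, which instead observes that each of the four quadrants determined by a horizontal and a vertical line through $p(a^{(1)})$ contains an entire disk $D(c_i^{(1)})$, so any convex $W$ meeting all disks has a point in each quadrant and hence contains $p$. But there is a genuine gap: you never invoke the four paths $P_1,\dots,P_4$, the $4$-cycle $F$, or the existence of the \emph{second} copy $Y^{(2)}$, and these are precisely the gadgets that make both of your key claims true. A planar graph can be re-embedded with any face as the outer face, so for a single copy $Y^{(1)}$ in isolation nothing prevents a realization in which $p(a^{(1)})$ lies in the \emph{unbounded} component of $\eR^2\setminus\gamma(C^{(1)})$; in that case your polygon $\Pi$ has winding number $0$ about $p(a^{(1)})$, not $1$, and the local information you cite (adjacency of $a^{(1)}$ to the even path-vertices, the cyclic order of the onions around $a^{(1)}$) cannot rule this out, since it is invariant under the choice of outer face. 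The paper's proof resolves this by noting that the paths $P_1,\dots,P_4$ force at least one of $p(a^{(1)}),p(a^{(2)})$ to lie inside $\gamma(F)$ and hence inside its own cycle $\gamma(C^{(j)})$; consequently the point $p(\Dcal)$ must be chosen as $p(a^{(1)})$ \emph{or} $p(a^{(2)})$ depending on the realization, whereas your unconditional choice $p:=p(a^{(1)})$ is simply wrong for the realizations where the first copy is turned inside out.

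The same omission undermines your use of Lemma~\ref{lem:radiuscsmall}. That lemma is a metric statement whose proof traps $D(c)$ inside a small quadrilateral and genuinely needs $p(s),p(a),p(t),p(b)$ to lie on the \emph{outer} face; if the onion is realized inside out, $D(c_i^{(j)})$ can be enormous, and no ``reflection/stereographic adaptation'' will save the radius bound, because stereographic-type maps do not preserve the class of Euclidean disks together with their radii. The paper gets the outer-face hypothesis honestly: once $p(a^{(1)})$ is known to lie inside $\gamma(F)$, the connected subgraph $X_i^{(1)}$ is drawn entirely inside $\gamma(F)$, so $F$ sits in the unbounded face of the drawing of $X_i^{(1)}$, and the paths $Q_1^{(i,1)},\dots,Q_4^{(i,1)}$ then force $a^{(1)},s_i^{(1)},t_i^{(1)},b_i^{(1)}$ onto the outer face. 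So you should establish the ``$p(a^{(j)})$ inside $\gamma(F)$'' step first (using $Z$, Lemma~\ref{lem:trianglefreeplanar} and the $P$-paths), fix $j$ accordingly, and only claim the radius bound and the winding number for that copy; with that repaired, your perturbation-of-$\Pi$ argument goes through.
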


\begin{proof}
For $uv \in E(H)$ let us write $\gamma(uv) = [p(u),p(v)]$ and 
for $H' \subseteq H$ a subgraph let us write  $\gamma(H') := \bigcup_{e\in H'} \gamma(e)$.
If $H'$ is an induced cycle of $H$, then $\gamma(H')$ is a simple closed curve
(by Lemma~\ref{lem:trianglefreeplanar}) and hence
$\eR^2 \setminus \gamma(H')$ consists of two regions, a
bounded and an unbounded one. We say that a point $x$ lies inside $\gamma(H')$ 
if it lies in the bounded component of $\eR^2 \setminus \gamma(H')$. 
 
Recall that $Z$ denotes the subgraph of $H$ induced by the vertices
$\{a^{(1)},a^{(2)}\} \cup V(F) \cup V(C^{(1)}) \cup V(C^{(2)}) 
\cup \bigcup_{m=1}^4 V(P_m)$.
By construction, $Z$ is triangle free and has minimum degree at least two.
Since it is an induced subgraph of $H$, by Lemma~\ref{lem:trianglefreeplanar}, 
the points $p(v) : v \in V(Z)$ define a F\'ary embedding of $Z$.

Let us observe that in any planar embedding of $Z$ either 
$p(a^{(1)})$ lies inside $\gamma(F)$ 
or $p(a^{(2)})$ lies inside $\gamma(F)$ -- otherwise we could not 
embed the paths $P_1,\dots, P_4$ without crossings (see figure~\ref{fig:addingpathsP}, right).
Without loss of generality it is $p(a^{(1)})$ in our F\'ary embedding.
We must then also have (see again figure~\ref{fig:addingpathsP}, right)
that $p(a^{(1)})$ lies inside $\gamma(C^{(1)})$.
This also gives that $D(a^{(1)})$ is contained in the 
bounded region of $\eR^2 \setminus \bigcup_{v\in C^{(1)}} D(v)$. 
(In other words $D(a^{(1)})$ is ``surrounded" by the $D(c_i^{(1)})$s.)

Recall that $H^{(i,1)}$ denotes the subgraph of $H$ induced by 
the vertices $V(F) \cup V(X_i^{(1)}) \cup \bigcup_{m=1}^4 V(Q_m^{(i,1)})$.
By construction $H^{(i,1)}$ is triangle free and of minimum degree at least two.
Hence Lemma~\ref{lem:trianglefreeplanar} again gives that
$p(v) : v \in V(H^{(i,1)})$ defines a F\'ary embedding of $H$. 
We already know that $p(a^{(1)})$ lies inside $\gamma(F)$. 
It now follows that $p(a^{(1)}), p(s_i^{(1)}), p(t_i^{(1)}), p(b_i^{(1)})$ must
lie on the outer face in the Far\'y embedding of $X_i^{(1)}$, because
otherwise we could not embed $Q_1^{(i,1)}, \dots, Q_4^{(i,1)}$ without
crossings (see figure~\ref{fig:addingpathsQ}, right).
Thus, by Lemma~\ref{lem:radiuscsmall} and the choice of $X=O_k$, we have 
that $r(c_i^{(1)}) < r( a^{(1)} ) / 1000$ for all $0 \leq i \leq N$.

Now set $p := p(a^{(1)})$, let 
$\ell_x$ be the horizontal line through $p$ and $\ell_y$ the vertical line through
$p$.
Observe that, because $D(a)$ is surrounded by the 
$D(c_i^{(1)})$s and $r(c_i^{(1)}) < r(a) /1000$ for all $i$, each of the four quadrants 
of $\eR^2 \setminus (\ell_x \cup \ell_y)$
contains one of the disks $D(c_i^{(1)})$.
(See figure~\ref{fig:lxly}.)

\begin{figure}[h!]
\begin{center}
\input{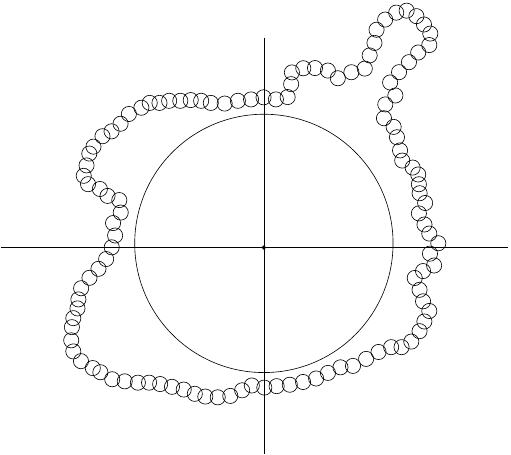tex_t}
\end{center}
\caption{The $D(c_i^{(1)})$s surround $D(a^{(1)})$, so that each of the four quadrants 
defined by $\ell_x,\ell_y$ contains one of the $D(c_i^{(1)})$s.\label{fig:lxly}}
\end{figure}

Hence, any $W$ that intersects $D(v)$ for all $v \in V(H)$, 
has a point in each of the four regions of $\eR^2 \setminus (\ell_x \cup \ell_y)$; and
hence if such a $W$ is convex then it must contain $p$.
This proves part~\ref{itm:Hlemma.i} of the Lemma.

That part~\ref{itm:Hlemma.ii} holds is immediate from the choice of 
$p = p(a^{(1)})$ as the center of  $D(a^{(1)})$.
\end{proof}

\begin{lemma}\label{lem:DGembed}
Let $\Lcal = (\ell_1,\dots,\ell_n)$ be an oriented line arrangement and $\Scal \subseteq \Dcal(\Lcal)\cap\{-,+\}^n $.
There exists a disk graph $G$ on 
vertex set 
$V(G) = \{ v_i^-,v_i^+ : i=1,\dots,n\} \cup  \{ v^{(j)} : v \in V(H), j=1,\dots,|\Scal|\}$, such that 
in any $\DG$-realization $(B(p(v),r(v)) : v\in V(G))$ of it, the 
oriented line arrangement $\Lcaltil = (\elltil_1,\dots, \elltil_n)$ defined by 

\[ 
\elltil_i^- := \{ z : w_i^T z < c_i \}, \]

\noindent 
where

\[ 
\begin{array}{l}
w_i := p(v_i^+)-p(v_i^-), \quad 
c_i := w_i^T {\Big(}\left(\frac{r(v_i^+)}{r(v_i^+)+r(v_i^-)}\right)p(v_i^-) 
+ {\Big(}\frac{r(v_i^-)}{r(v_i^+)+r(v_i^-)}{\Big)}p(v_i^+){\Big)}, 
\end{array} 
\] 


\noindent
has $\Scal \subseteq \Dcal(\Lcaltil)$. 
\end{lemma}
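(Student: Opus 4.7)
The plan is to mimic the construction in Lemma~\ref{lem:UDGembed}, except that each single vertex $u_j$ representing a sign vector is replaced by an induced copy $H^{(j)}$ of the graph $H$, whose role is to provide (via Lemma~\ref{lem:Hlemma}) a ``pointer'' point $\ptil_j := p(\Dcal^{(j)})$ in any realization of $G$. The line $\elltil_i$ is then recovered from the two disks $D(v_i^+),D(v_i^-)$ by a slightly more delicate formula than in the unit-disk case, and the key combinatorial fact is that $v_i^+,v_i^-$ are non-adjacent in $G$.

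I would build the canonical realization as follows. Let $\Pcal=\{p_1,\dots,p_{|\Scal|}\}$ be a set of points realizing the sign vectors in $\Scal$. Apply Lemma~\ref{lem:embeddisksR} to get disks $D_i^-\subseteq\ell_i^-$ and $D_i^+\subseteq\ell_i^+$ (with any common radius large enough) with $\Pcal\cap\ell_i^{\pm}\subseteq D_i^{\pm}$. Take $D(v_i^{\pm}):=D_i^{\pm}$. Now, for each $j$, place a scaled and translated copy of a fixed $\DG$-realization of $H$ at $p_j$, with its distinguished point $p(\Dcal)$ of Lemma~\ref{lem:Hlemma} sitting at $p_j$. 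If the scaling is chosen small enough (so each copy of $H$ is contained in an arbitrarily small ball around $p_j$), then the copies do not interact with each other, and each copy is entirely inside $D(v_i^+)$ or entirely outside it, according to whether $p_j\in\ell_i^+$ or not (and analogously for $D(v_i^-)$). Let $G$ be the resulting intersection graph. By construction, in $G$ the vertex $v_i^+$ is adjacent to every vertex of $H^{(j)}$ iff $p_j\in\ell_i^+$, and non-adjacent to every vertex of $H^{(j)}$ iff $p_j\in\ell_i^-$; and $v_i^+v_i^-\notin E(G)$.

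For the verification, fix an arbitrary $\DG$-realization $(B(p(v),r(v)):v\in V(G))$ and apply Lemma~\ref{lem:Hlemma} to each $H^{(j)}$ to obtain $\ptil_j$. If $p_j\in\ell_i^+$, then $D(v_i^+)$ meets every disk of $\Dcal^{(j)}$, so part~\ref{itm:Hlemma.i} gives $\ptil_j\in D(v_i^+)$; and $D(v_i^-)$ is disjoint from every disk of $\Dcal^{(j)}$, so part~\ref{itm:Hlemma.ii} gives $\ptil_j\notin D(v_i^-)$. The key computation is now to show that this forces $w_i^T\ptil_j>c_i$. Writing $p_\pm=p(v_i^{\pm}),\ r_\pm=r(v_i^{\pm})$, one computes
\[
c_i = w_i^T p_+ - \tfrac{r_+}{r_++r_-}\,\|w_i\|^2,
\]
and, using $\|\ptil_j-p_+\|<r_+$ together with Cauchy--Schwarz,
\[
w_i^T\ptil_j \;>\; w_i^T p_+ - \|w_i\|\,r_+ .
\]
The non-adjacency of $v_i^+,v_i^-$ in $G$ forces $D(v_i^+)\cap D(v_i^-)=\emptyset$, hence $\|w_i\|\geq r_++r_-$, which is exactly what is needed to conclude $w_i^T p_+-\|w_i\|r_+\geq c_i$, and therefore $\ptil_j\in\elltil_i^+$. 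The case $p_j\in\ell_i^-$ is symmetric. Thus $\sigma(\ptil_j;\Lcaltil)=\sigma(p_j;\Lcal)$ for all $j$, giving $\Scal\subseteq\Dcal(\Lcaltil)$.

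The main technical point is the choice of the line formula: in the $\UDG$ setting the perpendicular bisector worked because the radii were equal, but for general disks we need a line that separates $D(v_i^+)$ from $D(v_i^-)$ in \emph{every} realization (not only in the canonical one, where the two disks would be externally tangent along $\ell_i$). The computation above shows that the weighted point $q=(r_+p_-+r_-p_+)/(r_++r_-)$ together with direction $w_i$ achieves this precisely when $\|w_i\|\geq r_++r_-$, which is guaranteed by the edge-free pair $v_i^+v_i^-$. Beyond this, the only other care is in Step 2 of the construction, where one must verify that the tiny copies of $H$ can simultaneously realize the correct adjacency pattern with all $v_i^{\pm}$ and avoid unwanted interactions; this is a routine compactness/scaling argument, once the $D_i^{\pm}$ are fixed.
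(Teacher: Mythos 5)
Your proposal is correct and follows essentially the same route as the paper's proof: the same canonical construction (disks $D_i^{\pm}$ from Lemma~\ref{lem:embeddisksR} plus a shrunken copy of $H$ near each $p_j$), the same appeal to Lemma~\ref{lem:Hlemma} to extract the points $\ptil_j$ in an arbitrary realization, and the same computation showing that the weighted point $(r_+p_-+r_-p_+)/(r_++r_-)$ with normal $w_i$ separates the two disks because $\norm{w_i}\geq r_++r_-$. The only cosmetic differences are that you invoke part~\ref{itm:Hlemma.ii} of Lemma~\ref{lem:Hlemma} directly where the paper instead deduces $\ptil_j\notin\Dtil(v_i^s)$ from $\ptil_j\in\Dtil(v_i^{-s})$ and disjointness, and that you verify the inequality only at the point $\ptil_j$ rather than for the whole disk; both are equivalent.
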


\begin{proof}
Let $\Pcal = \{ p_1,\dots,p_{|\Scal|}\}$ be a set of points
such that $\Scal = \{ \sigma(p;\Lcal) : p \in \Pcal \}$.

Let $D_1^-,D_1^+,\dots,D_n^-,D_n^+$ be as provided by Lemma~\ref{lem:embeddisksR}, and let us set

\begin{equation}\label{eq:GDGdef}
\begin{array}{l}
D(v_{i}^s) := D_i^s  \quad \text{ for } s \in \{-,+\}, j=1,\dots,n, \\
\end{array} 
\end{equation}

\noindent
Now consider a $1 \leq j \leq |\Scal|$. Then $O_j := \bigcap \{ D_i^s : p_j \in D_i^s \}$ is open.
Hence we can place a suitably shrunken copy of a realization of $H$ inside $O_j$.
Let $H^{(j)}$ denote the copy of $H$ placed in $O_{j}$ and let 
$u^{(j)} : u \in V(H)$ denote the vertices of $H^{(j)}$.

Let $G$ be the corresponding intersection graph of disks, and let 
$(\Dtil(v) : v \in V(G))$ be an arbitrary realization of $G$ as a disk graph.
Let us write $\Dtil(v) = B(\ptil(v),\rtil(v))$ for all $v \in V(G)$.
For each $1\leq j \leq |\Scal|$, let 
$\ptil_j = p( ( \Dtil(u^{(j)}) : u \in V(H) ) )$ be the point 
provided by Lemma~\ref{lem:Hlemma}, applied to $H^{(j)}$.

Suppose that, for some $1 \leq j \leq |\Scal|$ and
$1\leq i \leq n$ and $s \in \{-,+\}$ we have
$p_j \in D(v_i^s)$.
Then we have that $D(u^{(j)}) \cap D(v_i^s ) \neq \emptyset$ for all $u \in V(H)$
by construction.
We must then also have $\Dtil(u^{(j)}) \cap \Dtil(v_i^s ) \neq \emptyset$ for all $u \in V(H)$,
because the $\Dtil$s and the $D$s define the same intersection graph.
Since $\Dtil(v_i^s)$ is convex, by the property of $\ptil_j$ certified by Lemma~\ref{lem:Hlemma}, 
we also have $\ptil_j \in \Dtil(v_i^s)$.

Now suppose that $p_j\not\in D(v_i^s)$. Then 
$p_j \in D(v_i^{-s})$ by choice of the $D(v_i^s)$s 
(Lemma~\ref{lem:embeddisksR}).
And thus, by the argument we just gave, $\ptil_j \in \Dtil(v_i^{-s})$.
Since $\Dtil(v_i^s)\cap\Dtil(v_i^{-s}) = \emptyset$ we 
thus have $\ptil_j\not\in\Dtil(v_i^s)$.
We have just proved that 

\begin{equation}\label{eq:pptiliff}
p_j \in D(v_i^s) \text{ if and only if }
\ptil_j \in \Dtil(v_i^s).
\end{equation}

\noindent
Let the oriented line arrangement $\Lcaltil = (\elltil_1,\dots,\elltil_n)$  be
as in the statement of the lemma.
Pick an arbitrary $z \in \Dtil(v_i^-)$.
Then we can write $z = \ptil(v_i^-) + \rtil(v_i^-) u$ with $\norm{u} < 1$.
Hence, with $w_i$ and $c_i$ as in the statement of the lemma, we have 

\[ \begin{array}{rcl}
w_i^T z
& = & 
w_i^T{\Big(}\ptil(v_i^-) + \rtil(v_i^-) u{\Big)} \\
& < & 
w_i^T {\Big(}\ptil(v_i^-) 
+ \rtil(v_i^-) \frac{w_i}{\norm{w_i}}{\Big)} \\
& \leq & 
w_i^T \left(\ptil(v_i^-) + 
\left(\frac{\rtil(v_i^-)}{\rtil(v_i^-)+\rtil(v_i^+)}\right)(\ptil(v_i^+) - \ptil(v_i^-)) \right) \\
& = & 
w_i^T {\Big(}\left(\frac{\rtil(v_i^+)}{\rtil(v_i^+)+\rtil(v_i^-)}\right)\ptil(v_i^-) 
+ {\Big(}\frac{\rtil(v_i^-)}{\rtil(v_i^+)+\rtil(v_i^-)}{\Big)}\ptil(v_i^+){\Big)} \\
& = & c_i, 
\end{array} \]

\noindent
where we have used that $\norm{w} = \norm{\ptil(v_i^+) - \ptil(v_i^-)} \geq \rtil(v_i^+)+\rtil(v_i^-)$ 
(since $\Dtil(v_i^+)$ and $\Dtil(v_i^-)$ are disjoint)
in the third line.
We have just proved that $\Dtil(v_i^-) \subseteq \elltil_i^-$ for all $i$.
Completely analogously $\Dtil(v_i^+) \subseteq \elltil_i^+$ for all $i$.
By~\eqref{eq:pptiliff} and the choice 
of $D(v_i^s) := D_i^s$ 
(recall the $D_i^s$s are chosen such that $D_i^s \subseteq \ell_i^s$, and that 
either $p_j \in D_i^-$ or $p_j\in D_i^+$ for all $j$) we see that

\[ \sigma(p_j;\Lcal) = \sigma(\ptil_j;\Lcaltil), \]

\noindent
for all $j$. This proves the lemma.
\end{proof}

\noindent
Now we are finally in a position to prove the lower bound of Theorem~\ref{thm:DG}.

\begin{lemma}
$\fDG(n) = 2^{2^{\Omega(n)}}$.
\end{lemma}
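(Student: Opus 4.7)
The plan is to mimic the proof of Lemma~\ref{lem:UDGlb} closely, but using Lemma~\ref{lem:DGembed} in place of Lemma~\ref{lem:UDGembed} and with a little extra care because the line equations defining $\Lcaltil$ in Lemma~\ref{lem:DGembed} now involve the radii as well as the centers of the disks, so we will first need to clear denominators before applying Lemma~\ref{lem:gridspan}.

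First, given $k \in \eN$, apply Theorem~\ref{thm:sizeLline} to obtain a set $\Scal \subseteq \{-,+\}^n$ with $n \leq 12k+37$ and $|\Scal| \leq 33k+103$ such that (i) some line arrangement $\Lcal$ on $n$ lines has $\Scal \subseteq \Dcal(\Lcal)$ and (ii) every line arrangement $\Lcaltil$ with $\Scal \subseteq \Dcal(\Lcaltil)$ satisfies $\spanH(\Lcaltil) \geq 2^{2^k}$. Then feed $\Lcal$ and $\Scal$ into Lemma~\ref{lem:DGembed} to get a disk graph $G$ on $2n + |\Scal| \cdot |V(H)| = O(k)$ vertices (since $H$ has constant size).

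Next, suppose $(B(p(v),r(v)) : v\in V(G))$ is any integer $\DG$-realization of $G$ with $p(v) \in \{-m,\dots,m\}^2$ and $r(v) \in \{1,\dots,m\}$. By Lemma~\ref{lem:DGembed} the induced line arrangement $\Lcaltil$ has lines $\elltil_i = \{z : w_i^T z = c_i\}$ with $w_i = p(v_i^+) - p(v_i^-)$ and $c_i$ a weighted sum involving radii in the denominator. Multiplying through by $r(v_i^+)+r(v_i^-)$ one obtains equivalent line equations $w_i'^T z = c_i'$ where
\[
w_i' := (r(v_i^+)+r(v_i^-))\, w_i,\qquad c_i' := w_i^T\bigl(r(v_i^+)\,p(v_i^-) + r(v_i^-)\,p(v_i^+)\bigr),
\]
both having integer entries. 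A routine bound gives $\|w_i'\|_\infty \leq 4m^2$ and $|c_i'| \leq 8m^3$, so all coefficients lie in $\{-K,\dots,K\}$ with $K \leq 8m^3$.

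Finally, apply Lemma~\ref{lem:gridspan} to conclude
\[
2^{2^k} \leq \spanH(\Lcaltil) \leq 2^{9/2} \cdot K^6 \leq 2^{9/2}\cdot (8m^3)^6 = 2^{45/2}\, m^{18},
\]
which yields $m \geq 2^{2^k/18 - O(1)} = 2^{2^{\Omega(k)}}$. Since $|V(G)| = O(k)$, this gives $\fDG(|V(G)|) = 2^{2^{\Omega(|V(G)|)}}$, completing the proof. The only slightly delicate step is the denominator clearing in the expression for $c_i$: once we observe that multiplying through by $r(v_i^+)+r(v_i^-)$ keeps everything polynomial in $m$, the rest is essentially identical to the unit disk graph argument.
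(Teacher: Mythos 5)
Your proposal is correct and follows essentially the same route as the paper's proof: invoke Theorem~\ref{thm:sizeLline}, feed $\Lcal,\Scal$ into Lemma~\ref{lem:DGembed}, clear the radii from the denominators of the $c_i$ exactly as you describe, and apply Lemma~\ref{lem:gridspan} with coefficients bounded by $8m^3$. The arithmetic ($2^{45/2}m^{18}$, hence $m=2^{2^{\Omega(k)}}$) checks out.
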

 
\begin{proof}
It again suffices to prove that for every $k\in \eN$ there exists a
disk graph $G$ on $O(k)$ vertices with $\fDG(G) = 2^{2^{\Omega(k)}}$.
Let us thus pick an arbitrary $k \in \eN$, let $\Lcal, \Scal$ be as provided by Theorem~\ref{thm:sizeLline},
and let $G$ be as provided by Lemma~\ref{lem:DGembed}.
Then $|V(G)| = 2|\Lcal| + |V(H)|\cdot|\Scal| = O(k)$.
Let $(B(p(v),r(v)) : v \in V(G) )$ be an arbitrary 
$\DG$-realization of $G$
with $p(v) \in \{-m,\dots,m\}^2$ and $r(v) \in \{1,\dots,m\}$ for all $v$ for some integer $m\in\eN$.
Let $\Lcaltil$ be as defined in the statement of Lemma~\ref{lem:DGembed}.
Then we can write, with $w_i, c_i$ as in Lemma~\ref{lem:DGembed}: 

\[ \begin{array}{rcl}
\elltil_i
& = & 
\{ z : w_i^T z = c_i \} \\
& = & 
\{ z :  (r(v_i^+)+r(v_i^-))w_i^T z = w_i^T (r(v_i^-)p(v_i^+)+r(v_i^+)p(v_i^-)) \} \\
& =: & 
\{ z : (w_i')^T z = c_i' \}. 
\end{array} \]

\noindent
Observe that the $w_i'$s have integer coordinates and the $c_i'$s are integers, whose absolute
values are all upper bounded by $8 m^3$. 
We can thus apply Lemma~\ref{lem:gridspan} to get that $2^{9/2} (8m^3)^6 
\geq \spanH(\Lcal) \geq 2^{2^k}$, and hence 
$m = 2^{2^{\Omega(k)}}$.
\end{proof}

\section{The lower bound for segment graphs\label{sec:SEGlb}}

An important tool in this section will be part (b) of the 
``order forcing lemma'' of Kratochv\'{\i}l and Matou{\v{s}}ek~\cite{KratochvilMatousek94}:

\begin{lemma}[\cite{KratochvilMatousek94}]\label{lem:orderforcing}
Let $G$ be a segment graph and $(S(v):v\in V(G))$ a
$\SEG$-realization such that all parallel segments are disjoint and no three
segments share a point.
Then there exists a segment graph $G'$ with $G \subseteq G'$ such that 
for every $\SEG$-realization $(\Stil(v) : v \in V(G'))$ there exists
an open set $O \subseteq \eR^2$ and a homeomorphism $\varphi: \eR^2 \to O$ such that
$\varphi[ S(v) ] \subseteq \Stil(v)$ for all $v \in V(G)$.
\end{lemma}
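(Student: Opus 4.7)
The plan is to augment $G$ with auxiliary vertices (represented by additional short segments) that rigidify the combinatorial structure of the arrangement $\Acal$ formed by $\{S(v):v\in V(G)\}$. Since no three of the $S(v)$ share a point and parallels are disjoint, $\Acal$ is a simple arrangement with a well-defined cell complex whose vertices are segment endpoints and crossings, whose edges are the maximal pieces of each $S(v)$ between consecutive events, and whose faces are the components of $\eR^2 \setminus \bigcup_v S(v)$. I would aim to construct $G'\supseteq G$ so that in every $\SEG$-realization $(\Stil(v):v\in V(G'))$, the induced cell complex of the subarrangement $\{\Stil(v):v\in V(G)\}$ is combinatorially isomorphic, with labels respected, to that of $\Acal$. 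Once this is in hand, the classical fact that a planar graph equipped with a rotation system admits an embedding unique up to ambient homeomorphism delivers an open set $O$ containing $\bigcup_v \Stil(v)$ together with a homeomorphism $\varphi:\eR^2\to O$ satisfying $\varphi[S(v)]\subseteq \Stil(v)$ for every $v\in V(G)$.

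To carry out the rigidification I would add three layers of auxiliary segments, each short and at generic angles so that no unintended crossings appear and $G'$ is itself a segment graph. First, a \emph{face anchor}: place a short segment $\sigma_F$ inside each face $F$ of $\Acal$, disjoint from every other segment. Second, a \emph{bridge}: for each maximal piece of some $S(v)$ between two consecutive events, add a short segment that crosses $S(v)$ in the interior of that piece and whose two endpoints lie in and intersect the two face anchors on either side; the new vertex of $G'$ is adjacent in $G'$ exactly to $v$ and to those two anchors. Third, a \emph{connector}: for each arrangement edge not already handled, add a short segment joining the two face anchors across it, witnessing face-adjacency via intersection. In any realization of $G'$, each bridge must connect its two prescribed anchors while crossing $\Stil(v)$ exactly once, and by a Jordan-curve argument this crossing is forced to lie between the images of the two events that bound the corresponding piece of $S(v)$. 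Running this reasoning over all bridges pins the cyclic order of crossings along each $\Stil(v)$ to match that along $S(v)$, while the anchors and connectors pin the face structure. Combinatorial isomorphism of the two cell complexes then follows.

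The main obstacle is precisely the Jordan-curve-style ``no cheating'' verification. One must rule out configurations where, for example, a bridge segment loops around and enters the wrong face while still intersecting its prescribed anchor, or where two crossings along some $\Stil(v)$ swap order but the anchors are realized in a correspondingly swapped configuration. Doing this cleanly requires exploiting that each bridge together with its two anchors forms a topological arc locally separating the plane in a prescribed way, together with a global induction over faces along the planar graph formed by the anchors and connectors. A secondary but less severe obstacle is turning the combinatorial isomorphism into a bona fide planar homeomorphism $\varphi$ with $\varphi[S(v)]\subseteq \Stil(v)$ rather than merely into a tubular neighborhood of $\Stil(v)$; this can be done face by face via piecewise-linear extension, but some care is needed near the auxiliary segments to guarantee the containment is strict.
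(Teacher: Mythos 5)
This lemma is not proved in the paper at all: it is quoted verbatim from Kratochv\'{\i}l and Matou\v{s}ek \cite{KratochvilMatousek94} and used as a black box, so there is no in-paper argument to compare yours against. Judged on its own, your proposal is in the right spirit --- gadgets that pin down the cell complex of the arrangement are indeed how order forcing is done --- but as written it has genuine gaps rather than merely omitted routine details.

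First, the construction itself is not well defined. A face $F$ of the arrangement can be large and non-convex and incident to many arrangement edges; a single short anchor $\sigma_F$ cannot be joined by a straight ``bridge'' to every edge of $\partial F$ without some bridge leaving $F$ and picking up unintended intersections. You would need several anchors per face tied into a connected gadget, or chains of segments, and then the adjacency bookkeeping changes. Second, and more seriously, the ``no cheating'' verification that you explicitly defer is the entire content of the lemma. Your claim that a bridge is forced to cross $\Stil(v)$ ``between the images of the two events that bound the corresponding piece of $S(v)$'' presupposes that the order of those events along $\Stil(v)$ is already known --- but that order is exactly what the bridges are supposed to force, so the argument as stated is circular. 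Breaking the circularity requires a genuine global induction (over a sweep of the arrangement, or over the face structure built from the anchors and connectors), which you name as the main obstacle but do not carry out. Third, even granted a labelled combinatorial isomorphism of the two cell complexes, producing a homeomorphism $\varphi:\eR^2\to O$ with $\varphi[S(v)]\subseteq\Stil(v)$ (a subarc of the straight segment $\Stil(v)$ itself, not merely of a neighbourhood of it) needs the face-by-face extension you postpone, including control of the unbounded face. In short, this is a plausible plan whose hard steps are precisely the ones left undone.
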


\begin{lemma}\label{lem:SEGembed}
Let $\Lcal = (\ell_1,\dots,\ell_n)$ be an oriented line arrangement
and $\Scal \subseteq \Dcal(\Lcal)$.
There exists a segment graph $G$ on
$\tel{\Lcal}+2\tel{\Scal}$ vertices such that for every
$\SEG$-realization $( S(v) : v \in V(G) )$ there is an oriented line arrangement
$\Lcaltil = (\elltil_1,\dots,\elltil_n)$ such that each line
$\elltil_i$ contains some segment $S(v)$ 
and $\Scal \subseteq \Dcal(\Lcaltil)$.
\end{lemma}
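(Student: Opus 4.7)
The plan is to encode $\Lcal$ and $\Scal$ into an explicit $\SEG$-realization and then apply the order forcing lemma (Lemma~\ref{lem:orderforcing}) to rigidify the construction across all realizations. First I would set up an initial realization with $n + 2|\Scal|$ segments as follows. For each line $\ell_i$ take a long segment $S_i \subseteq \ell_i$, chosen long enough that every intersection $\ell_i \cap \ell_j$ in $\eR^2$ lies in the relative interior of both $S_i$ and $S_j$. For each sign vector $s \in \Scal$ fix a representative $p_s$ with $\sigma(p_s;\Lcal) = s$ and place two short segments $S_s^1, S_s^2$ that cross transversally at $p_s$ and are disjoint from every $S_i$. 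A small generic perturbation ensures that no three segments share a point and that parallel segments are disjoint; we may also assume (by a negligible rotation that preserves $\Scal$) that no two lines of $\Lcal$ are parallel.

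Applying Lemma~\ref{lem:orderforcing} to this initial configuration produces a segment graph $G$ with essentially $n + 2|\Scal|$ vertices --- with any constant number of auxiliary vertices introduced by the order-forcing construction absorbed into the count, as outlined in the overview --- such that every $\SEG$-realization $(\Stil(v) : v \in V(G))$ is accompanied by a homeomorphism $\varphi:\eR^2 \to O$ with $\varphi[S(v)] \subseteq \Stil(v)$ for all $v$. I then define $\elltil_i$ to be the unique line containing the nondegenerate segment $\Stil_i$, and $\ptil_s := \Stil_s^1 \cap \Stil_s^2$. The latter is a single point, since the transverse crossing of $S_s^1, S_s^2$ at $p_s$ is preserved under the isomorphism of segment graphs. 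Moreover $\varphi(p_s) \in \varphi[S_s^1] \cap \varphi[S_s^2] \subseteq \{\ptil_s\}$, so $\varphi(p_s) = \ptil_s$, and each $\elltil_i$ contains the segment $\Stil_i = S(v_i)$, giving the first requirement of the lemma.

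The main obstacle is to orient the $\elltil_i$ so that $\sigma(\ptil_s;\Lcaltil) = s$ for every $s \in \Scal$. The key observation is that, thanks to the length condition on the $S_i$, every line--line intersection $\ell_i \cap \ell_j$ happens inside both segments, so the cells of the line arrangement $\Lcal$ and of the segment arrangement $\{S_i\}$ coincide inside a bounded region containing $\{p_s : s \in \Scal\}$. Since the order forcing lemma preserves the full intersection combinatorics of segments, the same coincidence holds for $\Lcaltil$ and $\{\Stil_i\}$ near each $\ptil_s = \varphi(p_s)$. Consequently the cell of $\Lcaltil$ containing $\ptil_s$ corresponds via $\varphi$ to the cell of $\Lcal$ containing $p_s$, and one can orient each $\elltil_i$ so that $\elltil_i^+$ matches $\ell_i^+$ in this correspondence, yielding $\sigma(\ptil_s;\Lcaltil) = s$ and hence $\Scal \subseteq \Dcal(\Lcaltil)$. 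The most delicate step is making the cell-matching rigorous: $\varphi$ is only a homeomorphism onto an open subset $O \subseteq \eR^2$ rather than a global self-homeomorphism, and some care is needed to check that a single orientation of each $\elltil_i$ works simultaneously for every sign vector in $\Scal$.
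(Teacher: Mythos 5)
Your overall strategy (rigidify an initial realization with the order forcing lemma, then read off a line arrangement and points from an arbitrary realization) matches the paper's, but your encoding of the sign vectors is different from the paper's and it is where the proof breaks. In your construction the two segments $S_s^1,S_s^2$ representing a sign vector are disjoint from every $S_i$, so the intersection graph records \emph{no} information about the position of the gadget relative to the lines; the only tool left is the homeomorphism $\varphi$ supplied by Lemma~\ref{lem:orderforcing}. But ``which side of the infinite line $\elltil_i$ does $\ptil_s$ lie on'' is not a homeomorphism invariant of the segment arrangement. Already in the original picture your key observation fails for any $p_s$ lying in an unbounded cell of $\Lcal$: one can walk from one side of $\ell_i$ to the other around an endpoint of $S_i$ without crossing any segment, so the cell of the segment arrangement containing $p_s$ meets several cells of the line arrangement. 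Even when $p_s$ lies in a bounded cell $C$, all that $\varphi$ gives you is that $\ptil_s$ lies inside the closed polygon $\varphi[\partial C]$, whose edges are subsegments of the $\Stil_i$; that polygon need not be convex (the order forcing lemma only forces the order of crossings along each segment, not convex position), so its interior need not lie in a single cell of $\Lcaltil$, and nothing at all constrains the position of $\ptil_s$ relative to the lines $\elltil_j$ that do not bound $C$. So the central claim $\sigma(\ptil_s;\Lcaltil)=s$ is not established, and this claim is the entire content of the lemma: the conclusion $\Scal\subseteq\Dcal(\Lcaltil)$ is exactly what later forces $\spanH(\Lcaltil)$ to be doubly exponential.

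The missing idea is a gadget that makes sidedness a consequence of the intersection graph itself. The paper first order-forces a fixed constant-size scaffold ($y_1,y_2,x_1,\dots,x_4$ and $t,m,b$) creating a ``corridor'' in which every line segment $S(v_i)$ stretches from $S(y_1)$ to $S(y_2)$ and hence separates $S(t)$ from $S(b)$; then each point $p_j$ is represented by \emph{two segments joining $p_j$ to $S(t)$ and to $S(b)$}. Since each $\ell_i$ separates $t$ from $b$, exactly one of $S(t_j),S(b_j)$ crosses $S(v_i)$, and which one it is records the side of $\ell_i$ containing $p_j$ as an edge/non-edge of $G$. In any other realization the scaffold confines $\Stil(t_j)$ to the corridor, where meeting $\elltil_i$ is equivalent to meeting $\Stil(v_i)$, so the recorded crossing pattern really does pin down $\sigma(\ptil_j;\Lcaltil)$. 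Your floating crossing gadgets would have to be replaced by something of this kind for the argument to go through.
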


\begin{proof}
We start with the segment graph $G_0$ on the vertices
$x_1,\dots, x_4$, $y_1$,$y_2$,$t$,$m$,$b$ and the realization $S$ of it given in
figure~\ref{fig:SEGbase}.
Let $O_t$ denote the inside of the quadrilateral bounded by $S(y_1),S(x_1),S(y_2),S(x_2)$;
let $O_m$ denote the inside of the quadrilateral bounded by $S(y_1),S(x_2),S(y_2),S(x_3)$;
and let $O_b$ denote the inside of the quadrilateral bounded by $S(y_1),S(x_3),S(y_2),S(x_4)$.
Let us list three key properties of the embedding $S$ for convenient future reference.

\begin{enumerate}
\item[\SSS{1}] $S(t) \subseteq O_t, S(m) \subseteq O_m, S(b) \subseteq O_b$;
\item[\SSS{2}] $S(y_1)$ and $S(y_2)$ each intersect 
$S(x_1),\dots, S(x_4)$ in the order of the indices;
\item[\SSS{3}] If $\ell$ is a line that intersects $S(y_1)$ between its 
intersection points with $S(x_2)$ and $S(x_3)$, and $\ell$ intersects
$S(y_2)$ between its intersection points with $S(x_2)$ and $S(x_3)$, then
$\ell$ separates $S(t)$ from $S(b)$.
\end{enumerate}

\begin{figure}[h!]
\begin{center}
\input{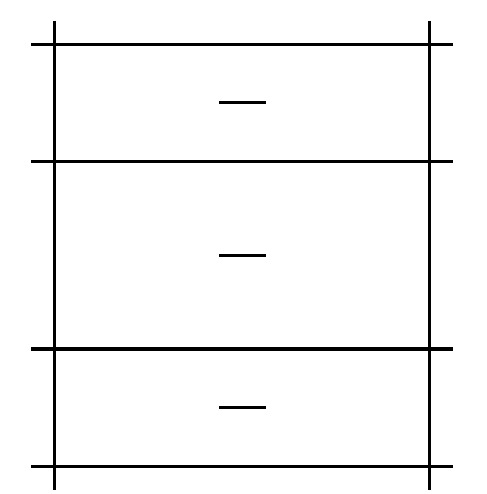tex_t}
\end{center}
\caption{The $\SEG$-embedding of $G_0$ we are starting from.\label{fig:SEGbase}}
\end{figure}



Let $G_1$ be the graph that Lemma~\ref{lem:orderforcing} constructs out of $G_0$, and 
consider an arbitrary $\SEG$-realization $S'$ of $G_1$.
Let us define $O_t',O_m',O_b'$ in the obvious way.
By construction of $G_1$, the properties \SSS{1}-\SSS{3}
hold also for $S'$ and $O_t',O_m', O_b'$.
(It can be seen that \SSS{1}-\SSS{3} are ``preserved under homeomorphism" in the
sense of Lemma~\ref{lem:orderforcing}).
Now let an oriented line arrangement $\Lcal = (\ell_1,\dots,\ell_n)$ and a set of sign vectors 
$\Scal \subseteq \Dcal(\Lcal)$ be given, and let $\Pcal = \{p_1,\dots,p_{|\Scal|}\}$ be such that 
$\sigma(\Pcal;\Lcal) = \Scal$.
By applying a suitable affine transformation if needed
(i.e.~we define a new oriented line arrangement and point set
by setting $\elltil_i^- := T[\ell_i^-]$ and $\ptil_j := T(p_j)$ -- 
clearly this does not affect sign vectors), we can assume without 
loss of generality that $\Pcal \subseteq O_m'$; that each line $\ell_i$ intersects 
$S'(m)$ and it intersects $S'(y_1)$ between the
intersection points with $S'(x_2)$ and $S'(x_3)$, and it
intersects $S'(y_2)$ between the intersection points with $S'(x_2)$ and $S'(x_3)$.
(So in particular $\ell_i$ separates $S'(t)$ from $S'(b)$ for all $1\leq i \leq n$.)
For each $1 \leq i \leq n$ let $S'(v_i)\subseteq\ell_i$ denote the segment
between the intersection point of $\ell_i$ with $S'(y_1)$ and 
the intersection point of $\ell_i$ with $S'(y_2)$.

For each $1\leq j \leq |\Scal|$ let 
$S'(t_j)$ be a line segment between $p_j$ and a point on $S'(t)$; and let
$S'(b_j)$ be a line segment between $p_j$ and a point on $S'(b)$.
We let $G$ be the intersection graph of the segments $(S'(v) : v \in V(G_1) \cup \{v_1,\dots,v_n\} \cup 
\{b_j, t_j : 1 \leq j \leq |\Scal|\} )$ we just constructed.
(See figure~\ref{fig:SEGencode} for a depiction of this construction.)

\begin{figure}[h!]
\begin{center}
\input{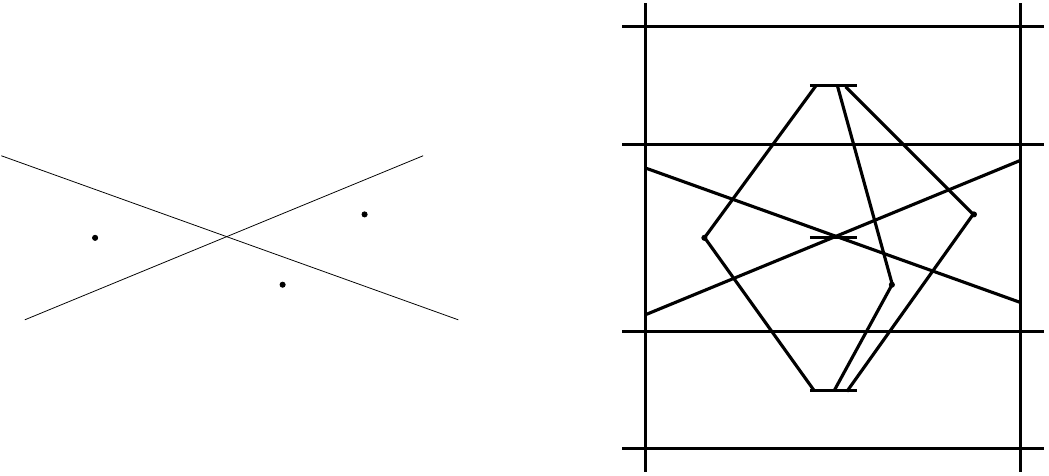tex_t}
\end{center}
\caption{$\Lcal$ and $\Pcal$ (left) and the a segment graph $G$
we construct from them (right).\label{fig:SEGencode}}
\end{figure}

Now let $(\Stil(v) : v \in V(G))$ be an arbitrary realization of $G$ as
a segment graph, and let $\Otil_t, \Otil_m, \Otil_b$ be defined
in the obvious way. Again \SSS{1}-\SSS{3} hold.
Let $\elltil_i$ denote the line that contains $\Stil(v_i)$ for $i=1,\dots,n$.
Since $\Stil(v_i)$ intersects all of $\Stil(m), \Stil(y_1),\Stil(y_2)$ and none of 
$\Stil(x_1),\dots,\Stil(x_4)$ (by construction of $G_1$), it must hold that 
$\Stil(v_i)$ intersects $\Stil(y_1)$ between 
the intersection points with $\Stil(x_3)$ and $\Stil(x_4)$; and $\Stil(v_i)$ intersects $\Stil(y_2)$ between 
the intersection points with $\Stil(x_3)$ and $\Stil(x_4)$.
Hence $\elltil_i$ separates $\Stil(t)$ from $\Stil(b)$.
Let us orient the lines $\elltil_i$ such that $\elltil_i^+ \supseteq \Stil(t)$ if and only if
$\ell_i^+ \supseteq S'(t)$.
For $1\leq j \leq |\Scal|$ let 
$\ptil_j$ be a point of $\Stil(b_j)\cap\Stil(t_j)$
(such a point exists as $S'(b_j),S'(t_j)$ intersect), and
let us write $\Pcaltil = \{\ptil_1,\dots,\ptil_{|\Scal|}\}$.
We claim that 

\begin{equation}\label{eq:SStilSEG}
\sigma(\Pcal;\Lcal) = \sigma(\Pcaltil;\Lcaltil).
\end{equation}

\noindent
To see that~\eqref{eq:SStilSEG} holds, pick an arbitrary $1\leq i \leq n$ and an 
arbitrary $1\leq j \leq |\Scal|$.
Suppose that $p_j$ and $S'(t)$ lie on the same side of $\ell_i$.
Since $\Stil(t_j)$ hits both $\ptil_j$ and $\Stil(t)$ but it does not
hit $\elltil_i$, we see that $\ptil_j$ and $\Stil(t)$ are also on the same side
of $\elltil_i$.
Similarly, if $p_j$ and $S'(b)$ are on the same side of $\ell_i$ then
$\ptil_j$ and $\Stil(b)$ are on the same side of $\elltil_i$.

By choice of (the orientation of) $\Lcaltil$, this proves~\eqref{eq:SStilSEG}
and hence the lemma.
\end{proof}

\begin{lemma}
$\fSEG(n) = 2^{2^{\Omega(n)}}$.
\end{lemma}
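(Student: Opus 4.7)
The plan is to follow the same three-step template used for unit disk graphs in Lemma~\ref{lem:UDGlb} and disk graphs in the previous section. First, for an arbitrary $k \in \eN$ I would invoke Theorem~\ref{thm:sizeLline} to obtain an oriented line arrangement $\Lcal$ and a set $\Scal \subseteq \{-,+\}^{|\Lcal|}$ with $|\Lcal|, |\Scal| = O(k)$ such that every oriented line arrangement $\Lcaltil$ with $\Scal \subseteq \Dcal(\Lcaltil)$ satisfies $\spanH(\Lcaltil) \geq 2^{2^k}$. I would then feed $\Lcal, \Scal$ into Lemma~\ref{lem:SEGembed} to produce a segment graph $G$ on $n = O(k)$ vertices with the property that in any $\SEG$-realization of $G$ one can extract an oriented line arrangement $\Lcaltil = (\elltil_1,\dots,\elltil_{|\Lcal|})$ with $\Scal \subseteq \Dcal(\Lcaltil)$, each $\elltil_i$ being the affine hull of some segment $S(v_i)$ of the realization.

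Next I would take an arbitrary integer $\SEG$-realization $(S(v):v\in V(G))$ with all endpoints in $\{-m,\dots,m\}^2$, and argue that each line $\elltil_i$ in the extracted arrangement admits a linear description $\{z : w_i^T z = c_i\}$ with integer coefficients of small magnitude. Concretely, if $S(v_i)$ has endpoints $a^{(i)}, b^{(i)} \in \{-m,\dots,m\}^2$, then $w_i := (b^{(i)}_y - a^{(i)}_y,\; a^{(i)}_x - b^{(i)}_x)$ is a nonzero integer vector with $\|w_i\|_\infty \leq 2m$, and $c_i := a^{(i)}_x b^{(i)}_y - b^{(i)}_x a^{(i)}_y$ is an integer with $|c_i| \leq 2m^2$. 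Thus the hypotheses of Lemma~\ref{lem:gridspan} hold with parameter $2m^2$ (which dominates $2m$ for $m\geq 1$).

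Applying Lemma~\ref{lem:gridspan} gives $\spanH(\Lcaltil) \leq 2^{9/2}(2m^2)^6 = 2^{21/2} m^{12}$, and combining this with the lower bound $\spanH(\Lcaltil) \geq 2^{2^k}$ from Theorem~\ref{thm:sizeLline} forces $m \geq 2^{2^{k-\log_2(12) - O(1)}} = 2^{2^{\Omega(k)}}$. Since $n = O(k)$, this yields $\fSEG(G) = 2^{2^{\Omega(n)}}$, as required.

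There is no real obstacle here: all the serious work has been done in Theorem~\ref{thm:sizeLline}, Lemma~\ref{lem:SEGembed} and Lemma~\ref{lem:gridspan}, and the argument amounts to plugging them together. The only mildly delicate point is the bookkeeping that shows the coefficients $w_i, c_i$ of a line through two integer points in $\{-m,\dots,m\}^2$ are polynomially bounded in $m$, so that Lemma~\ref{lem:gridspan} can be applied at all; this is elementary.
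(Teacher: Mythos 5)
Your proposal is correct and follows essentially the same route as the paper: invoke Theorem~\ref{thm:sizeLline} and Lemma~\ref{lem:SEGembed} to get a segment graph $G$ on $O(k)$ vertices whose every realization yields a line arrangement with huge span, then bound the integer coefficients of the lines through the segments' integer endpoints and apply Lemma~\ref{lem:gridspan}. The only differences are cosmetic (your explicit formula for $w_i, c_i$ and the constant $2m^2$ versus the paper's $4m^2$), and your normal vector is in fact written more carefully than the paper's.
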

 
\begin{proof}
It again suffices to prove that for every $k\in \eN$ there exists a
segment graph $G$ on $O(k)$ vertices with $\fSEG(G) = 2^{2^{\Omega(k)}}$.
Let us thus pick an arbitrary $k \in \eN$, let $\Lcal, \Scal$ be as provided by Theorem~\ref{thm:sizeLline},
and let $G$ be as provided by Lemma~\ref{lem:SEGembed}.
Then $|V(G)| = |V(G_1)| + |\Lcal| + 2|\Scal| = O(k)$.
Let $(S(v) : v \in V(G))$ be an arbitrary 
$\SEG$-realization of $G$ such that, for some $m\in\eN$, we 
can write $S(v) = [a(v),b(v)]$ with $a(v), b(v) \in \{-m,\dots,m\}^2$ 
for all $v \in V(G)$.
Let us order $V(G)$ arbitrarily as $v_1,\dots,v_n$ and let 
the oriented line arrangement $\Lcal = (\ell_1,\dots,\ell_n)$
be such that $\ell_i$ contains $S(v_i)$ for $i=1,\dots,n$
(the orientation does not matter in the sequel).
Then we can write

\[ \ell_i = \{ z : w^T z = c \}, \]

\noindent
with 

\[ 
w = \left(\begin{array}{c}(b(v_i))_y-(a(v_i))_y \\ (b(v_i))_x-(a(v_i))_x \end{array} \right), \quad c = w^T a(v_i). 
\]

\noindent
Thus the coordinates of $w$ and $c$ are integers whose absolute values are upper bounded by 
$4 m^2$. We can again apply Lemma~\ref{lem:gridspan} to see that 
$2^{7/2} (4m^2)^6 \geq \spanH(\Lcal) \geq 2^{2^k}$, and hence 
$m = 2^{2^{\Omega(k)}}$.
\end{proof}

\section{Proofs of the upper bounds\label{sec:ub}}

The {\em order type} of a point configuration $\Pcal = (p_1,\dots,p_n)$
stores for each triple of indices $1 \leq i_1<i_2<i_3 \leq n$ whether the 
points $p_{i_1}, p_{i_2},p_{i_3}$ are in clockwise position, in counter clockwise position or collinear.
Recall that a point configuration is in {\em general position} if no three points are collinear.
We need the following result of Goodman, Pollack and Sturmfels~\cite{GPS90}, stated here only for 
two dimensions.

\begin{theorem}[\cite{GPS90}]\label{thm:GPS}
Let $f(n)$ denote the least $k$ such that every order type of $n$ points
in general position in the plane can be realized by points on $\{1,\dots,k\}^2$.
Then $f(n) = 2^{2^{\Theta(n)}}$.
\end{theorem}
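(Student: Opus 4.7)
The plan is to prove the matching upper and lower bounds separately, leaning on machinery already developed in the excerpt.

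For the upper bound $f(n) \leq 2^{2^{O(n)}}$, realizability of a given order type is naturally phrased as an existential first-order sentence over $\eR$: introduce $2n$ real variables for the coordinates, and write down the $\binom{n}{3}$ strict sign conditions on the $3\times 3$ determinants
\[
\det\begin{pmatrix} 1 & x_{i_1} & y_{i_1} \\ 1 & x_{i_2} & y_{i_2} \\ 1 & x_{i_3} & y_{i_3} \end{pmatrix},
\]
each a quadratic polynomial with integer coefficients of constant bit-size. I would then invoke the effective real quantifier elimination bound of Grigor'ev and Vorobjov that the excerpt already uses for the upper bounds in Theorems~\ref{thm:DG} and~\ref{thm:UDG}: whenever such a system of $O(n^3)$ strict inequalities in $2n$ variables is satisfiable, it admits a real solution whose coordinates are algebraic of bit-complexity $2^{O(n)}$. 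Since the sign conditions are strict, the realization set is open, so a small rational perturbation of any real solution yields a rational realization of bit-size $2^{O(n)}$, and clearing denominators produces an integer realization inside $\{1,\dots,k\}^2$ with $k = 2^{2^{O(n)}}$.

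For the lower bound $f(n) \geq 2^{2^{\Omega(n)}}$, the goal is to exhibit a single order type that cannot be realized on a small grid. Starting from the constructible configuration $\Pcal = (p_1,\dots,p_{n_0})$ of Lemma~\ref{lem:GPS} on $n_0 = 3r+6$ points with $\cross(p_5,p_{n_0},p_2,p_1) = 2^{2^r}$, I would slightly perturb it to put it in general position while replacing each exact collinearity $p_i \in \ell(p_{j_1},p_{j_2})$ used in the Von Staudt chain by the condition that $p_i$ lies on a specified side of that line at distance less than some $\eps_i$ chosen small relative to $\eps_1 < \dots < \eps_{i-1}$, exactly as in the construction preceding Theorem~\ref{thm:sizeLline}. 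Taking the order type of this perturbed configuration as the target, the argument of Theorem~\ref{thm:sizeLline} transfers: any realization $\Pcaltil$ with the same order type is projectively equivalent to $\Pcal$ on the first four (general-position) points by Lemma~\ref{lem:4ptsprojeq}, and the projective image propagates along the chain because each $\ptil_i$ is pinned down (up to the prescribed $\eps_i$-error) as an approximate intersection of the two appropriate lines, with incidences certified by the order type. Lemmas~\ref{lem:crossproj} and~\ref{lem:crosseucl} then force some ratio of Euclidean distances among the corresponding four near-collinear points to be $\geq 2^{2^{\Omega(n)}}$, and Lemma~\ref{lem:gridspan} (applied to the line through two of these points and transversals through the other two) forces $k \geq 2^{2^{\Omega(n)}}$.

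The main obstacle is the perturbation step in the lower bound: the Von Staudt construction inherently uses collinear triples, whereas an order type of a general-position configuration records only oriented, non-collinear triples. One has to check that a sufficiently small, carefully chosen perturbation preserves the qualitative behaviour of all Von Staudt sequences along the chain, so that the purely combinatorial sign-data still forces approximate projective equivalence with $\Pcal$ and distorts the cross-ratio by at most a constant factor. This is precisely the role of the nested tolerances $\eps_1 < \dots < \eps_n$ in the authors' own proof of Theorem~\ref{thm:sizeLline}, so the argument can be carried out in essentially the same spirit; the remaining steps are then bookkeeping with results from Sections~\ref{sec:CPC} and~\ref{sec:OLA}.
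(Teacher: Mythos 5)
First, a point of context: the paper does not prove this statement at all --- Theorem~\ref{thm:GPS} is quoted from~\cite{GPS90} and used as a black box for the upper bound in Theorem~\ref{thm:SEG} --- so the only internal point of comparison is the closely related Theorem~\ref{thm:sizeLline}. Your upper bound is essentially the standard (and the original) argument: realizability of an order type is a system of $O(n^3)$ strict quadratic inequalities in $2n$ variables with constant-size integer coefficients, and Lemma~\ref{prop:grigor} bounds some real solution by $2^{2^{O(n)}}$. To land exactly on the integer grid you still owe a quantitative rounding step (openness alone gives you a nearby rational point but not a bound on its bit size; the clean fix is to rescale so every determinant exceeds a fixed threshold and then round, as in the proof of Lemma~\ref{lem:UDGub}), but that is routine.

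The lower bound has a genuine gap, at exactly the point you flag as the ``main obstacle.'' After perturbing the Von Staudt configuration into general position, the order type of the resulting $3r+6$ points records, for each former incidence $p_i \in \ell(p_{j_1},p_{j_2})$, only the \emph{side} of $\ell(\ptil_{j_1},\ptil_{j_2})$ on which $\ptil_i$ must lie in any realization --- a one-sided, purely combinatorial constraint. Nothing in the order type forces $\ptil_i$ to be \emph{close} to that line: the nested tolerances $\eps_1<\dots<\eps_n$ are metric properties of your one model configuration, and they do not transfer to an arbitrary realization of the same order type, which need not be metrically close to the model in any sense. So the claim that ``each $\ptil_i$ is pinned down (up to the prescribed $\eps_i$-error) as an approximate intersection of the two appropriate lines, with incidences certified by the order type'' is unsupported, and the inductive propagation of approximate projective equivalence collapses already at $i=5$. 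This pinning is precisely what the auxiliary apparatus of Theorem~\ref{thm:sizeLline} exists to provide: the four lines $\ell_{4i-3},\dots,\ell_{4i}$ and the eleven points of $\Qcal_i$, one per cell of $\Lcal_i$, trap the constructed point inside the small quadrilateral $\Etil_i$ via~\eqref{eq:Etili} and~\eqref{eq:abcd} by constraining it from \emph{all} sides, not just one. To make your argument work for order types you must keep that gadget, replacing each auxiliary oriented line by a pair of points so that ``which side of the line'' becomes the orientation of a triple; this still gives $O(n)$ points in total and is in effect what Goodman--Pollack--Sturmfels and Shor~\cite{Shor91} actually do, but it is an essential missing ingredient rather than bookkeeping.
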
 

\noindent
Observe that every segment graph has a realization in which  
the endpoints of the segments are in general position.
(To see this, start with an arbitrary realization.
By making the segments slightly longer if needed we can ensure that
every two intersecting segments intersect in a point that is interior to both. 
Now we can perturb the endpoints very slightly so that they are in
general position and the intersection graph of segments remains the same.)
Let us also observe that we can tell whether two segments $[a,b], [c,d]$ intersect or not
from the order type of $(a,b,c,d)$, unless $a,b,c,d$ are all collinear.
Hence if $( S(v) : v \in V(G) )$ is a $\SEG$-realization of $G$ whose endpoints are in general
position, then any point configuration with the same order type as the endpoints of the
segments 
also gives a $\SEG$-realization of $G$.
Therefore Theorem~\ref{thm:GPS} immediately implies the upper bound
in Theorem~\ref{thm:SEG}.

\begin{corollary}
$\fSEG(n) = 2^{2^{O(n)}}$.
\end{corollary}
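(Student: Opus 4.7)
The plan is to derive this upper bound as a direct consequence of Theorem~\ref{thm:GPS} together with the two observations made immediately before the corollary. Given $G \in \SEG(n)$, I would first fix any $\SEG$-realization $(S(v) : v \in V(G))$ and, as noted in the preceding paragraph, slightly lengthen each segment and then perturb all endpoints to obtain a new $\SEG$-realization $(S'(v) : v \in V(G))$ in which the $2n$ endpoints are in general position and the intersection graph is still $G$. Let $\Pcal$ denote this tuple of $2n$ endpoints, and let $\tau$ be its order type.

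Next I would apply Theorem~\ref{thm:GPS} to $\Pcal$: since $|\Pcal| = 2n$, there exists an integer point configuration $\Pcaltil$ on $\{1,\dots,k\}^2$ with the same order type $\tau$, where $k \leq f(2n) = 2^{2^{O(n)}}$. Using $\Pcaltil$ as the new endpoints (matched to the vertices of $G$ via the same pairing as in $\Pcal$), I obtain a tuple $(\Stil(v) : v \in V(G))$ of segments with integer endpoints in $\{1,\dots,k\}^2$. Because preserving the order type of $\Pcal$ preserves, for each pair of segments $[a,b], [c,d]$, the clockwise/counterclockwise/collinear data of every subtriple of $\{a,b,c,d\}$, and because $\Pcal$ is in general position so no four endpoints are collinear, the observation recalled just before the corollary gives that two segments $\Stil(u), \Stil(v)$ intersect if and only if $S'(u), S'(v)$ do. Hence $(\Stil(v) : v \in V(G))$ is an integer $\SEG$-realization of $G$.

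Finally, after a translation by $(-\lceil k/2 \rceil, -\lceil k/2 \rceil)$, which preserves both integrality and the intersection graph, all endpoints lie in $[-k,k]^2$, so $\fSEG(G) \leq k \leq 2^{2^{O(n)}}$. Taking the maximum over $G \in \SEG(n)$ yields $\fSEG(n) = 2^{2^{O(n)}}$. There is essentially no substantive obstacle here; the only point requiring a moment of care is verifying that the perturbation step genuinely reduces the problem to a question about order types of $2n$ points in general position, and that changing $n$ to $2n$ in the exponent of Theorem~\ref{thm:GPS} does not affect the asymptotics, since $2^{2^{O(2n)}} = 2^{2^{O(n)}}$.
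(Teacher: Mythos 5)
Your proposal is correct and is essentially the paper's own argument: the paper also perturbs an arbitrary realization so the $2n$ endpoints are in general position, notes that intersection of segments is determined by the order type of the endpoints (barring four collinear points), and then invokes Theorem~\ref{thm:GPS} to realize that order type on a grid of size $2^{2^{O(n)}}$. The extra details you supply (the translation into $[-k,k]^2$ and the harmlessness of replacing $n$ by $2n$ in the exponent) are exactly the routine points the paper leaves implicit.
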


The upper bounds in Theorems~\ref{thm:DG},~\ref{thm:UDG}
are relatively straightforward consequences of a result of Grigor'ev and Vorobjov
that was also the main ingredient in the proof of the upper bound in Theorem~\ref{thm:GPS}. 
The following is a reformulation of Lemma 10 in~\cite{grigorevvorobjov88}:

\begin{lemma}[\cite{grigorevvorobjov88}]\label{prop:grigor}
For each $d \in \eN$ there exists a constant $C = C(d)$ such that
the following hold.
Suppose that $h_1, \dots, h_k$ are polynomials in $n$ variables
with integer coefficients, and degrees $\deg(h_i) < d$.
Suppose further that the bit sizes of the all coefficients are less than $B$.
If there exists a solution $(x_1,\dots,x_n) \in \eR^n$
of the system $\{h_1\geq 0, \dots, h_k \geq 0\}$, then 
there also exists one with 
$|x_1|, \dots, |x_n| \leq \exp[ (B+\ln k)C^n]$.
\end{lemma}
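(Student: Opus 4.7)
\noindent The plan is to invoke effective quantifier-elimination / critical-point methods from real algebraic geometry, reducing the existential problem to the task of bounding coordinates of \emph{isolated} real solutions of polynomial systems and then applying resultant-based estimates. I will sketch this scheme; the result itself is due to Grigor'ev and Vorobjov, so in the paper one simply cites it.

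First I would reduce to a zero-dimensional situation. Let $S := \{ x \in \eR^n : h_i(x) \geq 0 \text{ for all } i \}$ and assume $S \neq \emptyset$. In each connected component of $S$, look for a point $x^*$ minimizing $\norm{x}^2$; if the component is unbounded, first add an auxiliary constraint $\norm{x}^2 \leq R$ for large $R$ (the bound we shall derive will not depend on $R$, so one can afterwards let $R \to \infty$, or pass to a projection). At such a minimizer, Lagrange multipliers force $2 x^*$ to lie in the nonnegative cone spanned by the gradients of the active constraints. Together with the equations $h_i(x^*)=0$ for the active indices, and after enumerating the at most $\binom{k}{n}$ possible active sets, this yields a polynomial system in at most $n$ unknowns (plus multipliers, which are eliminable via resultants) whose coefficient bit size is $O(B + \log k)$ and whose degrees remain bounded in terms of $d$.

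The heart of the proof is then the following estimate: if $f_1, \dots, f_n$ are polynomials in $n$ variables with integer coefficients of bit size less than $B'$ and degrees less than $d$, any isolated common real zero has coordinates of absolute value at most $2^{B' d^{O(n)}}$. This follows from elimination theory: iterated resultants produce, for each coordinate $x_i$, a nonzero univariate polynomial of degree $d^{O(n)}$ and coefficient bit size $B' d^{O(n)}$ having $x_i$ as a root, and a standard Cauchy-type bound on the absolute values of roots in terms of the coefficients finishes the job. Substituting back through the reduction of the previous paragraph yields the claimed bound $\exp[(B+\ln k) C^n]$ for a suitable $C = C(d)$.

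The main obstacle in making the sketch rigorous is the critical-point reduction: when the boundary of $S$ is singular or the gradients of the active constraints are linearly dependent, the Lagrange-multiplier step fails as stated. Grigor'ev and Vorobjov handle this by working over a non-archimedean extension of $\eR$, using infinitesimal deformations that perturb the system just enough to achieve genericity while preserving quantitative control on coefficient sizes. This is where the technical heart of their argument lies, and it is why we quote their lemma rather than reprove it here.
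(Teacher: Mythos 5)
The paper does not prove this lemma at all: it is stated as a direct reformulation of Lemma~10 of Grigor'ev and Vorobjov and simply cited, so your decision to quote the result rather than reprove it is exactly what the authors do. Your accompanying sketch is a faithful outline of the standard critical-point argument behind such bounds (minimize $\norm{x}^2$ on each connected component, pass to a zero-dimensional system via Lagrange/KKT conditions over the possible active sets, eliminate down to univariate polynomials by resultants, and apply a Cauchy-type root bound), and you correctly identify that the genuinely hard part --- handling singular boundaries and degenerate gradient configurations via infinitesimal deformations --- is precisely where the cited work does its labor. Two small caveats if you ever wanted to flesh this out: the minimizer of $\norm{x}^2$ on a connected component of $\{h_1\geq 0,\dots,h_k\geq 0\}$ exists without the auxiliary ball constraint, since the component is closed and one may intersect it with a ball around any of its points; and the bookkeeping of degrees and coefficient bit sizes through the resultant elimination (in particular, where the $\ln k$ term enters) is asserted rather than carried out. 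Neither affects the correctness of citing the lemma as the paper does.
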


\begin{lemma}\label{lem:UDGub} $\fUDG(n) = 2^{2^{O(n)}}$.
\end{lemma}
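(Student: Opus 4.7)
The plan is to express ``$G \in \UDG$'' as the solvability of a polynomial inequality system of the form required by Lemma~\ref{prop:grigor}, apply that lemma to extract a real solution of bounded magnitude, and then round it to an integer $\UDG$-realization of comparable size. Taking as variables $x_v, y_v$ for each $v \in V(G)$ together with a common radius $r$, I would consider the system $\Sigma$ consisting of
\begin{enumerate}
\item $4r^2 - (x_u-x_v)^2 - (y_u-y_v)^2 - 1 \geq 0$ for every edge $uv \in E(G)$;
\item $(x_u-x_v)^2 + (y_u-y_v)^2 - 4r^2 - 1 \geq 0$ for every non-edge $uv \notin E(G)$;
\item $r - 1 \geq 0$.
\end{enumerate}
This is a system of $O(n^2)$ polynomial inequalities of degree at most $2$ in $2n+1$ variables, with integer coefficients of constant bit size.

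The first step is to verify that $\Sigma$ has a real solution if and only if $G \in \UDG$. A real solution to $\Sigma$ manifestly yields a $\UDG$-realization of $G$: the open disks of radius $r$ around the centres $(x_v,y_v)$ overlap for edges (since $(2r)^2 > (x_u-x_v)^2+(y_u-y_v)^2$) and are disjoint for non-edges. Conversely, starting from any $\UDG$-realization of $G$, I would first perturb the centres generically to eliminate any closed-disk tangencies (so that every edge \emph{and} non-edge inequality becomes strict), and then scale every centre and the radius by a sufficiently large factor $M$ so that all the squared-distance slacks and $r$ itself become $\geq 1$. Applying Lemma~\ref{prop:grigor} to $\Sigma$ then yields a real solution $(x_v^*, y_v^*, r^*)$ with every entry bounded in absolute value by
\[
M_0 \ :=\ \exp\bigl[(O(1) + \ln O(n^2))\cdot C^{2n+1}\bigr] \ =\ 2^{2^{O(n)}}.
\]

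Finally, I would turn this bounded real solution into an integer $\UDG$-realization of comparable size. The polynomials appearing in $\Sigma$ are Lipschitz with constant $O(M_0\sqrt{n})$ on the ball of radius $M_0$, so if I choose an integer denominator $d := \lceil C_0\, M_0\, n\rceil$ for a sufficiently large absolute constant $C_0$ and set $q_i := \lfloor d\, x_i^* \rfloor / d$, the rational point $q$ still satisfies every inequality of $\Sigma$ with slack at least $1/2$. Setting $z_i := d\, q_i \in \Zed$ multiplies each quadratic polynomial of $\Sigma$ by $d^2$ (up to a harmless additive $d^2-1$ term), so the integer tuple $z$ is still a genuine integer $\UDG$-realization of $G$, with every coordinate and the radius bounded by $d\,(M_0+1) = O(M_0^2\, n) = 2^{2^{O(n)}}$. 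Hence $\fUDG(G) \leq 2^{2^{O(n)}}$.

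The most delicate point is the rounding step: it is essential that $\Sigma$ carries slack $1$ on \emph{both} edges and non-edges, since an arbitrarily small rounding error at a tangent non-edge would otherwise push the configuration into the edge regime. This is exactly the reason that the tangency-eliminating perturbation must be performed before rescaling and feeding the system into Lemma~\ref{prop:grigor}.
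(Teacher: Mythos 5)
Your proposal is correct and follows essentially the same route as the paper: encode a $\UDG$-realization as a system of quadratic inequalities with built-in slack, invoke the Grigor'ev--Vorobjov bound (Lemma~\ref{prop:grigor}) to get a real solution of size $2^{2^{O(n)}}$, and round it to an integer realization of comparable size. The only soft spot is the phrase ``perturb the centres generically'': a generic small perturbation of a tangent non-edge pair is as likely to push the disks together as apart, so you should instead use the explicit dilation $x_v \mapsto \lambda x_v$, $y_v \mapsto \lambda y_v$ with $\lambda>1$ close to $1$ and $r$ fixed, which increases all pairwise distances (making every non-edge inequality strict) while preserving the finitely many strict edge inequalities --- which is exactly the paper's argument.
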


\begin{proof}
Let $G$ be a unit disk graph on $n$ vertices.
Any $\UDG$-realization is a nothing more than a solution 
$(x_1,y_1,\dots,x_n,y_n, r) \in \eR^{2n+1}$ of the system
of polynomial of inequalities

\begin{equation}\label{eq:UDGsyst} 
\begin{array}{ll}
(x_i-x_j)^2 + (y_i-y_j)^2 < r^2, & \text{ for all } ij \in E(G), \\
(x_i-x_j)^2 + (y_i-y_j)^2 \geq r^2, & \text{ for all } ij \not\in E(G), \\
r > 0.
\end{array} 
\end{equation}

\noindent
Observe that any solution of~\eqref{eq:UDGsyst} can be perturbed
to a solution in which all inequalities are strict 
(if we fix $r'=r$ and set $x_i' = \lambda x_i, y_i'=\lambda y_i$ for all
$1\leq i \leq n$ then, if we chose $\lambda > 1$ but very very close to 1, 
then we have a new solution in which all inequalities are strict).
Similarly, if we now multiply all variables by the same (very large) scalar $\mu$
we get a solution of:

\begin{equation}\label{eq:UDGsyst2} 
\begin{array}{ll}
(x_i-x_j)^2 + (y_i-y_j)^2 \leq (r-10)^2, & \text{ for all } ij \in E(G), \\
(x_i-x_j)^2 + (y_i-y_j)^2 \geq (r+10)^2, & \text{ for all } ij \not\in E(G), \\
r \geq 100.
\end{array} 
\end{equation}

\noindent
This is a system of $1+{n\choose 2}$ polynomial inequalities of degree less than 3 in 
$2n+1$ variables, with all coefficients small integers.
Since the system has a solution, by lemma~\ref{prop:grigor}, there exists a solution to this system with 
all numbers less than $\lfloor\exp[ \gamma^n ]\rfloor$ in absolute value for some $\gamma$
(we absorb the factor $\ln( 1+{n\choose 2} ) + O(1)$ by taking $\gamma > C$).
Let us now round down all numbers to the next integer, i.e.~we set
$\xtil_i := \lfloor x_i\rfloor, \ytil_i:=\lfloor y_i\rfloor, 
\rtil := \lfloor r\rfloor$.
If $ij \in E(G)$ then we have

\[ \begin{array}{rcl}
(\xtil_i-\xtil_j)^2 + (\ytil_i-\ytil_j)^2
& \leq & 
(|x_i-x_j| + 1)^2 + (|y_i-y_j| + 1)^2 \\
& = & 
(x_i-x_j)^2 + (y_i-y_j)^2
+ 2 (|x_i-x_j| + |y_i-y_j|) + 2 \\
& \leq & 
(r-10)^2 + 4 (r-10) + 2 \\
& = & 
r^2 - 16 r + 62 \\
& \leq & 
(r-1)^2 \\
& < & 
\rtil^2
\end{array}\]

\noindent
(Here we have used $|x_i-x_j| + |y_i-y_j| \leq 2\sqrt{(x_i-x_j)^2 + (y_i-y_j)^2} \leq 2(r-10)$ in the 
third line; and in the fifth line we used that $r\geq 100$.)
Similarly, if $ij\not\in E(G)$ then 

\[ \begin{array}{rcl}
(\xtil_i-\xtil_j)^2 + (\ytil_i-\ytil_j)^2
& \geq & 
(|x_i-x_j| - 1)^2 + (|y_i-y_j| - 1)^2 \\
& = & 
(x_i-x_j)^2 + (y_i-y_j)^2
- 2 (|x_i-x_j| + |y_i-y_j|) + 2 \\
& \geq & 
(x_i-x_j)^2 + (y_i-y_j)^2
- 4 \sqrt{(x_i-x_j)^2 + (y_i-y_j)^2} + 2 \\
& \geq & 
(r+10)^2 - 4(r+10) + 2 \\
& = & 
r^2 + 16 r + 62 \\
& \geq & 
r^2 \\
& \geq & 
\rtil.
\end{array} \]

\noindent
(Here we have used that 
$d^2 - 4d + 2$ is increasing for $d\geq 2$ in the fourth line.) 
Thus $(\xtil_1,\ytil_1,\dots,\xtil_n,\ytil_n,\rtil)$ is a 
solution of~\eqref{eq:UDGsyst}.
Since all variables of this solution are integers
of absolute value at most 

\[ 
\exp[\gamma^n] = 2^{\log(e) \cdot \gamma^n} = 2^{2^{n \log\gamma + \log\log e}} = 2^{2^{O(n)}}, \]

\noindent
this proves the lemma.
\end{proof}

\noindent
The proof of the upper bound in Theorem~\ref{thm:DG} is very similar to the
proof of Lemma~\ref{lem:UDGub} and we therefore omit it.

\begin{lemma}
$\fDG(n) = 2^{2^{O(n)}}$. \noproof
\end{lemma}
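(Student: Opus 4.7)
The plan is to mimic almost verbatim the proof of Lemma~\ref{lem:UDGub}, replacing the single radius variable $r$ by one radius variable per vertex. First, I would observe that a $\DG$-realization of a graph $G$ on $n$ vertices is exactly a solution $(x_1,y_1,r_1,\dots,x_n,y_n,r_n)\in\eR^{3n}$ to the system
\begin{equation}\label{eq:DGsyst}
\begin{array}{ll}
(x_i-x_j)^2+(y_i-y_j)^2 < (r_i+r_j)^2, & \text{for all } ij\in E(G), \\
(x_i-x_j)^2+(y_i-y_j)^2 \geq (r_i+r_j)^2, & \text{for all } ij\notin E(G), \\
r_i > 0, & \text{for all } 1\leq i\leq n.
\end{array}
\end{equation}
This is a system of $O(n^2)$ polynomial inequalities of degree at most $2$ in $3n$ variables, with all coefficients bounded by a small integer.

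Next I would argue, exactly as in the proof of Lemma~\ref{lem:UDGub}, that starting from any solution we can first perturb slightly to make all inequalities strict, and then scale all variables by a sufficiently large factor $\mu>1$ to reach a solution of the ``padded'' system
\begin{equation}\label{eq:DGsyst2}
\begin{array}{ll}
(x_i-x_j)^2+(y_i-y_j)^2 \leq (r_i+r_j-10)^2, & \text{for all } ij\in E(G), \\
(x_i-x_j)^2+(y_i-y_j)^2 \geq (r_i+r_j+10)^2, & \text{for all } ij\notin E(G), \\
r_i \geq 100, & \text{for all } 1\leq i\leq n.
\end{array}
\end{equation}
Since~\eqref{eq:DGsyst2} is a system of degree-$2$ polynomial inequalities with integer coefficients of bit size $O(1)$ in $3n$ variables, Lemma~\ref{prop:grigor} provides a real solution in which every variable has absolute value at most $\exp[\gamma^n]$ for some constant $\gamma$ (the additive $\ln k$ term is absorbed since $k=O(n^2)$).

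Finally, I would round every coordinate and radius down to the nearest integer, setting $\xtil_i:=\lfloor x_i\rfloor$, $\ytil_i:=\lfloor y_i\rfloor$, $\rtil_i:=\lfloor r_i\rfloor$, and check that the resulting integer tuple still satisfies~\eqref{eq:DGsyst}. The computation is the obvious analogue of the one in Lemma~\ref{lem:UDGub}: for $ij\in E(G)$ one bounds $(\xtil_i-\xtil_j)^2+(\ytil_i-\ytil_j)^2$ from above by $(r_i+r_j-10)^2+4(r_i+r_j-10)+2$, which is strictly less than $(r_i+r_j-1)^2 \leq (\rtil_i+\rtil_j)^2$ once $r_i+r_j\geq 100$; and for $ij\notin E(G)$ one bounds it from below by $(r_i+r_j+10)^2-4(r_i+r_j+10)+2 \geq (r_i+r_j)^2 \geq (\rtil_i+\rtil_j)^2$. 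Since the rounded variables are integers of absolute value at most $\exp[\gamma^n]=2^{2^{O(n)}}$, the resulting integer $\DG$-realization certifies $\fDG(G)\leq 2^{2^{O(n)}}$.

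The routine part is the arithmetic of the rounding step, and there is no real obstacle since the roles of $r$ and $r-10$, $r+10$ in the unit-disk proof are simply played by $r_i+r_j$ and $r_i+r_j\mp 10$ here, and the same $\sqrt{a^2+b^2}\geq(|a|+|b|)/\sqrt{2}$ trick (or equivalently $|a|+|b|\leq \sqrt{2}\cdot\sqrt{a^2+b^2}$) controls the cross terms. The only point worth checking carefully is that each $r_i$ in the scaled solution is at least $100$, which we can enforce at the scaling step by taking $\mu$ sufficiently large; everything else transcribes mechanically from the proof of Lemma~\ref{lem:UDGub}.
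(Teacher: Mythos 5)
Your proposal is correct and is exactly the argument the paper has in mind: the authors omit this proof precisely because it is the routine adaptation of Lemma~\ref{lem:UDGub} that you carry out (one radius variable per vertex, perturb, scale to the padded system, apply Lemma~\ref{prop:grigor}, round down). One small arithmetic correction in the edge case: since two radii are now rounded down, you only have $\rtil_i+\rtil_j > r_i+r_j-2$ rather than $\geq r_i+r_j-1$, so the comparison should be against $(r_i+r_j-2)^2=(r_i+r_j)^2-4(r_i+r_j)+4$ instead of $(r_i+r_j-1)^2$; the upper bound $(r_i+r_j)^2-16(r_i+r_j)+62$ still comfortably beats this once $r_i+r_j\geq 200$, so the conclusion stands.
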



\section*{Acknowledgements}

We thank Erik Jan van Leeuwen and Tony Huyhn for helpful discussions. 
We thank Jan Kratochv{\'{\i}}l and Ji{\v{r}}{\'{\i}} Matou{\v{s}}ek for making their 1988 
preprint available to us.
We also thank Peter Shor for making a copy of his 1991 paper available to us.

\bibliographystyle{plain}
\bibliography{ReferencesDGs}

\end{document}